\newtheorem{thm}{Theorem}[section]
\newtheorem{corollary}[thm]{Corollary}
\newtheorem{lemma}[thm]{Lemma}
\newtheorem{prop}[thm]{Proposition}
\newtheorem{rem}[thm]{Remark}
\newtheorem{dfn}[thm]{Definition}
\newtheorem{assumption}[thm]{Assumption}
\newcommand{\RgO}{\ensuremath{\mathbb{R}_{>0}}}
\newcommand{\RgeO}{\ensuremath{\mathbb{R}_{ \geq 0}}}
\newcommand{\RleO}{\ensuremath{\mathbb{R}_{ \le 0}}}
\newcommand{\Rat}[1]{\ensuremath{\mathbb{R}^{#1}}}
\newcommand{\B}{\mathcal{B}}
\newcommand{\bone}{\mathbf{1}}
\newcommand{\D}{\mathcal{D}}
\newcommand{\I}{\mathcal{I}}
\newcommand{\bN}{\mathbb{N}}
\newcommand{\bE}{\ensuremath{\mathbb{E}}}
\renewcommand{\S}{\mathbb {S}}
\newcommand{\supp}[1]{{\rm supp}(#1)}
\newcommand{\cone}[1]{{\rm cone}(#1)}
\newcommand{\longthmtitle}[1]{\mbox{}{\bf \textit{(#1).}}}
\newcommand{\longthmtitlealt}[1]{\mbox{}{\bf \textit{(#1)}}}
\newcommand{\conv}{\ensuremath{\operatorname{conv}}}
\newcommand{\cl}{\ensuremath{\operatorname{cl}}}
\newcommand{\bdry}{\ensuremath{\operatorname{bdry}}}
\newcommand\oprocendsymbol{\hbox{$\bullet$}}
\newcommand\oprocend{\relax\ifmmode\else\unskip\hfill\fi\oprocendsymbol}
\title{Gradient sampling algorithm for subsmooth functions}
\newcommand{\subscr}[2]{#1_{\textup{#2}}}
\newcommand{\dist}{\operatorname{dist}}
\newcommand{\interior}[1]{\operatorname{int}(#1)}
\newcommand{\proj}{\operatorname{proj}}
\newcommand{\eps}{\varepsilon}
\newcommand{\argmin}{\ensuremath{\operatorname{argmin}}}
\newcommand{\argmax}{\ensuremath{\operatorname{argmax}}}
\newcommand{\sign}{\operatorname{sign}}
\renewcommand{\eps}{\ensuremath{\varepsilon}}
\author[D. Boskos]{Dimitris Boskos}
\address{Delft Center for Systems and Control, TU Delft}
\email{d.boskos@tudelft.nl}
\author[J. Cort\'es]{Jorge Cort\'es}
\address{Department of Mechanical and Aerospace Engineering, UC San Diego}
\email{cortes@ucsd.edu}
\author[S. Mart{\'\i}nez]{Sonia Mart{\'\i}nez}
\address{Department of Mechanical and Aerospace Engineering, UC San Diego}
\email{soniamd@ucsd.edu}
\begin{document}

\maketitle

\begin{abstract}
  This paper considers non-smooth optimization problems where we seek
  to minimize the pointwise maximum of a continuously parameterized
  family of functions. Since the objective function is given as the
  solution to a maximization problem, neither its values nor its
  gradients are available in closed form, which calls for
  approximation. Our approach hinges upon extending the so-called gradient sampling
  algorithm, which approximates the Clarke generalized gradient of the
  objective function at a point by sampling its derivative at nearby
  locations. This allows us to select descent directions around points
  where the function may fail to be differentiable and establish
  algorithm convergence to a stationary point from any initial
  condition. Our key contribution is to prove this convergence by
  alleviating the requirement on continuous differentiability of the
  objective function on an open set of full measure. We further
  provide assumptions under which a desired convex subset of the
  decision space is rendered attractive for the iterates of the
  algorithm.
\end{abstract}

\section{Introduction}

The need to solve optimization problems with non-convex objective
functions is pervasive. A partial list of major applications where one
seeks to minimize non-convex functions includes prediction error
methods in system identification, nonlinear optimal control, and
neural network training. Due to the inherent intractability of
algorithms that search for global optima, in non-convex programming it is typical to settle with stationary points, which satisfy
  first-order necessary local optimality conditions. For smooth
problems, the approach to reach such points rests upon the selection
of a descent direction that is obtained from information about
  the first or second-order derivatives of the objective
  function. When the objective function is no longer differentiable,
finding a descent direction becomes considerably harder and algorithms
rely on subgradient information. While this can provide global
convergence guarantees when minimizing convex functions via
  subgradient methods, even local convergence guarantees are
significantly more challenging in the non-convex case. In
  particular, it is no longer possible to guarantee that a non-smooth minimization
  gets closer to the set of stationary points at each iteration and
  one needs to employ an approximate subdifferential of the function
  at each point to obtain a descent direction instead. In addition to
  this, even when the function is convex but non-smooth, Cauchy's
  algorithm along guaranteed decent directions may zigzag and converge
  to a point that is not a minimum \cite{PW:75}, \cite[Chapter
  VIII.2.2]{JBHU-CL:93}.
As also remarked in~\cite{JVB-FEC-ASL-MLO-LS:20}, the same failure of convergence to a stationary point, or minimum for convex problems, has been established in the 
recent works \cite{JG-ASL:18} and \cite{AA-MLO:20} for non-exact line searches.

To address these challenges, the gradient sampling (GS) algorithm
introduced by Burke, Lewis, and Overton in \cite{JVB-ASL-MLO:05},
approximates the Goldstein $\eps$-subdifferential of the objective function at each iterate by augmenting its gradient information at that point with sampled  gradients at nearby locations where the function is differentiable.  The algorithm then computes the least-norm element from the convex hull of these gradients, which in turn provides a more ``robust" search direction for its next iterate. Our goal in this paper is to extend this algorithm to scenarios where neither the values nor the
gradients of the objective function are available in closed form. This
is the case when the objective function is given as the pointwise
maximum of a family of sufficiently regular functions, and so its
value can only be approximated by solving an inner maximization
problem. In this situation, the almost-everywhere continuous
differentiability of the objective function may fail, which is a
critical condition~\cite{JVB-FEC-ASL-MLO-LS:20} to control the local 
variation of sampled gradients and ensure the algorithm convergence.  
Such classes of objective functions are typically encountered in
non-convex robust optimization
problems~\cite{BH-MD:13,MK-FL-MS:22,DB-ON-KMT:10a,DB-ON-KMT:10b},
including distributionally robust optimization problems with
finite-dimensional distribution parameterizations.  A concrete
instance of the latter problem class is distributionally robust
coverage control optimization, where a non-convex problem of optimal
resource becomes non-smooth due to its robustification.
%

\textit{Literature review:} 
The monographs~\cite{KCK:85,NZS:85,JFB-JCG-CL-CAS:06,YC-JSP:21} provide an
extensive account of methods and problems for non-differentiable,
non-convex optimization.
Typical approaches to such problems include bundle
methods~\cite{HS-JZ:92,WH-CS-MS:16}, derivative-free
algorithms~\cite{AMB-BK-MS:08,JL-MM-SMW:19}, quasi-Newton
methods~\cite{ASL-MLO:13}, smoothing of the objective
function~\cite{XC:12}, and sampling-based
methods~\cite{JVB-FEC-ASL-MLO-LS:20}. A recent strand of research also
considers non-smooth, non-convex optimization algorithms that are
guaranteed to converge in finite time to approximate stationary points
with high probability, and analyses of their
complexity~\cite{MJ-GK-TL-OS-MZ:23,GK-OS:22,DD-DDYTL-SP-GY:22,JZ-HL-SS-AJ:20}.

Nonsmooth optimization problems arise naturally when one seeks to 
to minimize the maximum of a family of functions. This can be manifested as 
a min-max optimization problem, which can in turn be also viewed as a special 
case of bilevel programs or semi-infinite optimization problems
\cite{SD:02,RH-KOK:93,OS:03}. When the objective function of 
the min-max formulation is convex and concave in the minimization and maximization  variables, respectively, then optimal points of the outer minimization 
correspond to saddle points of the min-max problem. Convergence of the saddle 
point dynamics for convex-concave functions can be traced back to the seminal 
treatise \cite{KJA-LH-HZ:58}, while (sub)gradient methods that yield a sequence
converging to a saddle point for this problem class can be found for instance in \cite{DM:77,AN:04,AN-AO:08a}, and the more recent works  \cite{AM-AO-SP:20,RJ-AM:25}.
When the overall objective function is no longer convex-concave,
convergence requirements are typically relaxed to stationary
points. Such a convergence is established in several recent
contributions that rely on gradient descent ascent algorithms
\cite{TL-CJ-MIJ:20,TZ-LZ-AMCA-JB-JL:23,TL-CJ-MIJ:25} and optimistic
gradient descent ascent/extra gradient algorithms
\cite{MB-EYH-AJ:23,PM-YD-HL-MM:22,SL-DK:21,YC-AO-WZ:24}.

When the inner maximization is carried out over a compact set of parameters and the 
function is twice continuously differentiable for each fixed parameter (with derivatives up to second order being jointly continuous with respect to the parameters) the resulting function that gets minimized is said to be of   lower-$\mathcal C^2$ class. The work~\cite{RTR:82} establishes the equivalence between this class and the class of weakly convex functions,  while \cite{DD-DD:19} and \cite{JCD-FR:18} provide stochastic approximation algorithms to minimize the latter. One can also view weakly convex functions as a special case of difference of convex (DC) functions (cf. \cite{JBHU:85}), which form a broad category of costs in the global optimization literature~\cite[Chapter 4]{RH-PMP-NVT:00}.

A relevant class of non-convex optimization problems is related to the problem of optimal coverage, which seeks to optimally place a team of agents across an environment with a spatially distributed quantity of
interest~\cite{JC-SM-TK-FB:02-tra,FB-JC-SM:08-sv}.  
The purpose is to maximize the expected utility of the whole team by making use of a local performance function for each agent. This problem has widespread applications in environmental monitoring and search-and-rescue operations. 
Besides being non-convex, the objective function of the problem typically lacks differentiability at configurations where two agents share the same location. There are also situations where the performance function in the expected cost may be discontinuous, like e.g., when there are hard constraints on the agents' sensing ranges. This generates multiple extra configurations where the expected cost is non-smooth~\cite{JC-FB:09-sirev}. Yet one more source of non-smoothness emerges by accounting for distributional robustness when the density of the quantity of interest is uncertain. This problem has been considered in \cite{DB-JC-SM:23-cdc}, which uses wavelet-based ambiguity sets to capture plausible densities for the quantity of interest~\cite{DB-JC-SM:22-cdc}.

The same problem formulation as in optimal coverage can be considered if one is interested in probability density quantization~\cite{SG-HL:00}. Solutions to the problem for fixed densities typically rely on Lloyd's \cite{QD-ME-LJ:06} or stochastic approximation algorithms ~\cite{GP:98,GP:18}. A significant variant of the same problem is that of optimal data clustering, where the density is replaced by an empirical distribution with a large number of atoms, and the agent positions represent  the centroids of the clusters. The precise goal in this case is to determine the positions of the centroids in order to minimize the average (square) distance of the data from their closest centroid. The works \cite{AMB-RMA-NS:23} and \cite{AMB-NK-ST:25} provide an extensive analysis of nonconvex and non-smooth optimization algorithms to solve this problem. 

Sampling-based methods for non-smooth and non-convex problems
are motivated by the fact that locally Lipschitz functions are 
differentiable almost everywhere and therefore
their Goldstein $\eps$-subdifferential
at a point can be well approximated by
randomly sampling its nearby gradients~\cite{JVB-ASL-MLO:02}. This has
the benefit that the gradients of the function are often much cheaper
to compute compared to its subgradients. To this end, the GS algorithm
introduced in~\cite{JVB-ASL-MLO:05} samples at each iteration 
gradients of the function from a neighborhood of the current point, 
and adjusts this neighborhood appropriately along
the iterations. Then a subsequence of the points 
generated by the algorithm is guaranteed to approach a stationary 
point. There are specific adjustments to the GS algorithm
which account for the issues of checking whether the objective
function is differentiable at its sampled points, and even computing
its derivatives at these points. Specifically, \cite{ESH-SAS-LEAS:16}
resolves the differentiability check by randomly perturbing the
gradient direction whereas~\cite{KCK:07,KCK:10} avoid it by allowing
null steps where the decision variable does not change and the
algorithm only reevaluates its parameters.  A non-derivative version
of the algorithm is further considered in~\cite{KCK:10}, which
approximates the gradient of the function via Steklov averages, 
while the recent work \cite{JVB-QL:21} extends the applicability 
of the algorithm beyond locally Lipschitz functions, requiring them only
to be directionally Lipschitz.
%

\textit{Statement of contributions:}
In this paper, we extend the GS algorithm to optimization problems
where the objective function does not have a closed-form formula and
is only approximated as the solution to an inner maximization problem
over a compact set of parameters.  Our first contribution is to
show that, almost surely,
this modified gradient sampling (mGS) algorithm guarantees asymptotic
convergence to a Clarke stationary point.  This relies on nontrivial
adjustments of derivative-free versions of the algorithm, since the
objective functions we consider are defined implicitly through the
solution of an inner maximization problem; thus, both their values
and their derivatives need to be approximated by different methods.

Our second contribution pertains to the conditions under which this
convergence is established. In particular, the recent survey paper
\cite{JVB-FEC-ASL-MLO-LS:20} noted that a necessary technical
requirement to establish convergence of the GS algorithm has been
overlooked in all existing works, which is that the objective function
needs to be \textit{continuously differentiable} on an \textit{open
  set of full measure}. Here, we significantly relax this condition by
only requiring that the objective function is of
\textit{lower-$\mathcal C^2$ class}, the class of so-called subsmooth
functions~\cite{RTR-RJBW:98}; in fact we only require the objective function to be locally Lispchitz continuous and lower-$\mathcal C^2$ on an open set of full measure. As it belongs to the lower-$\mathcal C^2$ class, the objective function may have discontinuous first-order derivatives in the classical sense on a set of positive measure. In the Appendix, we provide an explicit example of such a function.
%

Our third contribution establishes sufficient conditions that
guarantee the asymptotic convergence of the algorithm to a desirable
convex subset of the decision space towards which the negated gradient
of the function is always directed. In particular, we establish that
for any initial point, the iterations of the algorithm will get
arbitrarily close to this set. This is achieved without resorting to
any algorithm modifications that would introduce hard or soft
constraints on the feasible set, increasing its computational
complexity.

Our fourth contribution provides streamlined
probabilistic arguments to prove the almost sure convergence of the
(m)GS algorithm.  Existing arguments for this purpose establish that
certain events associated to limit points of the algorithm iterates
(cf. limit point $\bar x$ in \cite[Proof of Theorem 3.3(ii)]{KCK:07}
or constant $\eta=\inf_{k\in\mathbb N}\rho_\epsilon(x^k)$ in
\cite[Proof of Theorem 4.4]{JVB-ASL-MLO:05}) have probability
zero. Directly considering these events leads to their
parameterization through an uncountable set and it is not immediately
clear why their union will have probability zero. To elucidate that
this poses no issues, we modify the proof from \cite{KCK:07} by
building a suitable countable cover of the candidate limit points of
the algorithm. In the Appendix, we also provide a formal description
of the Markov chain generated by the algorithm to make the almost
everywhere convergence argument more precise.

Our final contribution is the exploitation of the algorithm to solve a
distributionally robust coverage optimization problem on the real
line. We establish that the objective function of this problem is
indeed lower-$\mathcal C^2$ on an open set of full measure, and
therefore meets the convergence criteria of the mGS algorithm. We also
show how a direct modification of the cost outside a convex set
ensures that the algorithm iterates will converge to it, yielding
decisions within a desirable range. An earlier version of the mGS algorithm presented here appeared without proof in the conference paper~\cite{DB-JC-SM:23-cdc}, which was focused on data-driven coverage control. The overlap with the current manuscript is minimal given the cadre of contributions outlined above, which includes the proof of the updated mGS algorithm together with several auxiliary and new results.

The paper is organized as follows. Section~\ref{sec:prelims}
introduces basic notation and preliminaries from non-smooth
analysis. Section~\ref{sec:GS:convergence} presents the mGS algorithm
and formally establishes its convergence. Section~\ref{sec:invariance}
generalizes the analysis further by providing sufficient conditions
that ensure the asymptotic convergence of the iterations to convex
domains. Section~\ref{sec:example} presents a numerical example of a
distributionally robust coverage optimization problem that is solved using the
mGS algorithm. In the Appendix, we gather the proofs of several
results that are mostly technical and omitted from the main body of
the paper to enhance the flow.

\section{Preliminaries and tools from non-smooth analysis}\label{sec:prelims}

We denote by $\|\cdot\|$ the Euclidean norm in $\Rat{n}$.  We use the
notation $[n_1:n_2]$ for the set of integers
$\{n_1,n_1+1,\ldots,n_2\}\subset\bN\cup\{0\} =:\bN_0$ and denote
$[n]:=[1:n]$. 
Given two sets $A,B\subset\Rat{n}$, we denote the smallest distance
between their elements by
$\dist(A,B):=\inf\{\|x-y\|\,|\,x\in A,y\in B\}$ and also use the
notation $\dist(x,A):=\dist(\{x\},A)$ when considering single-element
sets. The convex hull, closure, interior, and boundary of a set
$A\subset\Rat{n}$ are denoted by $\conv(A)$, $\cl(A)$, $\interior{A}$, and $\bdry(A)$, respectively. We denote by $B(x,\eps)$ the closed ball
with center $x\in\Rat{n}$ and radius $\eps>0$ and by
$\S^{n-1}(x,\eps):=\bdry(B(x,\eps))$ the $n-1$-dimensional sphere
with the same center and radius. 
The $\eps$-inflation of a set $A\subset\Rat{n}$ is
$B(A,\eps):=\cup_{x\in A}B(x,\eps)$.  
Given a nonempty set $K\subset\Rat{n}$, we denote by $\cone{K}$ the
cone generated by it, namely,
$\cone{K}:=\{\lambda x\,|\,\lambda\ge 0, x\in K\}$.  When $K$ is closed and 
convex, we denote by $\proj_K(x):=\argmin\{\|x-x'\|\,|\,x'\in K\}$ the
projection of $x\in\Rat{n}$ to $K$. A function
$f:\mathcal O\to\Rat{m}$ is $o(x)$ as $x\to 0$, where $\mathcal O$ is
an open subset of $\Rat{n}$ with $0\in\mathcal O$, if
$\lim_{x\to 0}f(x)/\|x\|$=0.  

\textit{Non-smooth analysis:} Consider a locally Lipschitz function
$f$ on $\Rat{n}$. It is known from Rademacher's theorem that $f$ is
differentiable almost everywhere. The Clarke generalized gradient of
$f$ at $x$ is defined as
$\bar\partial f(x):=\conv\{\lim_k\nabla f(x_k)\,|\,x_k\to x,x_k\in
A\}$ (following the notation
of~\cite{JVB-ASL-MLO:05,KCK:07,RTR-RJBW:98}), where $A$ is any
full-measure subset of a neighborhood of $x$ where $f$ is
differentiable. A point $x\in\Rat{n}$ is called Clarke stationary for
$f$ if $0\in\bar\partial f(x)$, which generalizes the notion of a
stationary point for continuously differentiable functions. The Goldstein $\eps$-subdifferential of $f$ at $x$ (cf. 
\cite{AAG:77}) is defined as
$\bar\partial_\eps f(x):=\conv(\bar\partial f(B(x,\eps)))$. 
Considering any set $\mathcal D_f$ of full measure on $\Rat{n}$ where
$f$ is differentiable, its Goldstein $\eps$-subdifferential at $x$ can be
approximated by the set
\begin{align}\label{G:epsilon}
  G_\eps(x):=\cl(\conv(\nabla
  f(B(x,\eps)\cap\mathcal D_f))),
\end{align}
introduced in~\cite{JVB-ASL-MLO:05}, since
$G_\eps(x)\subset\bar\partial_\eps f(x)$ and
$\bar\partial_{\eps_1} f(x)\subset G_{\eps_2}(x)$ for
$0\le\eps_1<\eps_2$. For a general function $f$ on $\Rat{n}$ (not
necessarily locally Lipschitz), the subgradient $\partial f(\bar x)$
of $f$ at $\bar x$ is defined as the set of all vectors $v\in\Rat{n}$
for which there exist sequences $x^\nu\to \bar x$ and $v_\nu\to v$ with
\begin{align*}
\liminf_{\substack{x\to x^\nu \\ x\ne x^\nu}} \frac{f(x)-f(x^\nu)+\langle v^\nu,x-x^\nu\rangle}{\|x-x^\nu\|}\ge 0,
\end{align*}
%
for all $\nu$ (cf.~\cite[Definition 8.3, Page 301]{RTR-RJBW:98}). The
following result delineates the relationship between the subgradients
and Clarke generalized gradients of locally Lipschitz functions.

\begin{prop}\longthmtitle{Subgradient/Clarke generalized gradients relationship
    \cite[\linebreak Theorem 9.61, Page
    403]{RTR-RJBW:98}}\label{prop:Clarkegradiens:sv:subgradient}
  Assume that $f$ is locally Lipschitz on the open set
  $\mathcal O\subset \Rat{n}$. Then,
  \begin{align*}
    \bar\partial f(x)=\conv(\partial f(x))  
  \end{align*}
 for all $x\in\mathcal O$. \hfill$\blacksquare$
\end{prop}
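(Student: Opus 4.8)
The plan is to establish the two inclusions $\conv(\partial f(x))\subseteq\bar\partial f(x)$ and $\bar\partial f(x)\subseteq\conv(\partial f(x))$ for each $x\in\mathcal O$, using two classical facts about the Clarke generalized gradient of a locally Lipschitz function that hold on all of $\mathcal O$: its support-function description $\bar\partial f(x')=\{w\in\Rat{n}\mid\langle w,d\rangle\le f^\circ(x';d)\ \text{for all }d\in\Rat{n}\}$, where $f^\circ(x';d):=\limsup_{y\to x',\,t\downarrow0}\big(f(y+td)-f(y)\big)/t$ is the Clarke generalized directional derivative; and the local boundedness (by the local Lipschitz modulus) together with outer semicontinuity of the set-valued maps $x'\mapsto\bar\partial f(x')$ and $x'\mapsto\partial f(x')$, which guarantee that limits of (sub)gradients taken along points converging to $x$ lie in the corresponding set at $x$.

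For the first inclusion I would take $v\in\partial f(x)$ with witnessing sequences $x^\nu\to x$ and $v^\nu\to v$, where each $v^\nu$ fulfils the defining $\liminf$-inequality of $\partial f$ at the base point $x^\nu$. Specializing that inequality to directional perturbations and comparing with the one-sided difference quotients shows $\langle v^\nu,d\rangle\le f^\circ(x^\nu;d)$ for every direction $d$, so $v^\nu\in\bar\partial f(x^\nu)$ by the support-function characterization; outer semicontinuity of $\bar\partial f$ then gives $v\in\bar\partial f(x)$, and convexity of $\bar\partial f(x)$ upgrades this to $\conv(\partial f(x))\subseteq\bar\partial f(x)$. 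For the reverse inclusion I would start from the definition $\bar\partial f(x)=\conv\{\lim_k\nabla f(x_k)\mid x_k\to x,\ x_k\in\mathcal D_f\}$, recalling that the generating set is the same for every full-measure differentiability set $\mathcal D_f$. At any $x_k\in\mathcal D_f$ one has $f(y)-f(x_k)-\langle\nabla f(x_k),y-x_k\rangle=o(\|y-x_k\|)$ by definition of the derivative, so the $\liminf$ in the definition of $\partial f$, evaluated at $x_k$ with the vector $\nabla f(x_k)$, is a genuine limit equal to $0$; taking constant witnessing sequences then yields $\nabla f(x_k)\in\partial f(x_k)$. Hence, for any generating sequence $x_k\to x$ with $x_k\in\mathcal D_f$ and $\nabla f(x_k)\to v$, outer semicontinuity of $\partial f$ gives $v\in\partial f(x)$; thus every generator of $\bar\partial f(x)$ lies in $\partial f(x)$, and passing to convex hulls yields $\bar\partial f(x)\subseteq\conv(\partial f(x))$. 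Combining the two inclusions proves the proposition.

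The routine portion is the passage to limits through the (sub)gradient inequalities. The step I expect to demand the most care --- and that I would invoke or cite rather than reprove --- is the package of ambient structural facts for locally Lipschitz $f$: the support-function description of $\bar\partial f$, the local boundedness and outer semicontinuity of both $\bar\partial f$ and $\partial f$, and the independence of the gradient-limit generating set from the chosen full-measure set $\mathcal D_f$. These all rest on local Lipschitz continuity together with Rademacher's theorem, but they are the load-bearing ingredients; once they are in place, the argument reduces to the bookkeeping described above.
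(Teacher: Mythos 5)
The paper does not prove this proposition at all: it is quoted verbatim from \cite[Theorem 9.61]{RTR-RJBW:98} and used as a black box, so there is no in-paper argument to compare yours against. Judged on its own terms, your two-inclusion argument is correct. The easy direction is $\bar\partial f(x)\subseteq\conv(\partial f(x))$: at a point of differentiability $\nabla f(x_k)$ is indeed a regular subgradient (the defining $\liminf$ is a limit equal to zero), so every generator $\lim_k\nabla f(x_k)$ of $\bar\partial f(x)$ lies in $\partial f(x)$ directly by the outer-limit definition of the limiting subdifferential, and taking convex hulls finishes it; this half needs nothing beyond the definitions. For the reverse inclusion your chain $v^\nu\in\hat\partial f(x^\nu)\Rightarrow\langle v^\nu,d\rangle\le\liminf_{t\downarrow0}(f(x^\nu+td)-f(x^\nu))/t\le f^\circ(x^\nu;d)\Rightarrow v^\nu\in\bar\partial f(x^\nu)$, followed by outer semicontinuity of $\bar\partial f$ and convexity of $\bar\partial f(x)$, is also sound. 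The one caveat, which you flag yourself, is that this direction leans on the equivalence between the support-function description of $\bar\partial f$ via $f^\circ$ and the gradient-limit formula that the paper uses as the definition; that equivalence (Clarke's generalized gradient theorem, resting on Rademacher) is of essentially the same depth as the proposition being proved, so your argument is really a reduction to that classical package rather than an independent proof. Since the paper itself simply cites the result, this is an acceptable level of rigor, but be aware that the heavy lifting is hidden in the invoked facts, not in the bookkeeping. (Minor note: the paper's displayed definition of $\partial f$ has a sign slip, $+\langle v^\nu,x-x^\nu\rangle$ where the standard definition has a minus; you correctly worked with the standard version.)
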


We are further interested in higher-order regularity properties of
$f$, in particular when it can be expressed as the pointwise maximum
of appropriate families of sufficiently regular functions coined as
\textit{subsmooth} functions in~\cite{RTR-RJBW:98}.

\begin{dfn}\longthmtitle{Lower-$\mathcal C^k$ functions
    \cite[Definition 10.19, Page 447]{RTR-RJBW:98}} \label{dfn:lower:Ck:fnc}
  A real-valued function $f$ on an open subset $\mathcal O$ of
  $\Rat{n}$ is lower-$\mathcal C^k$, for some $k\in\mathbb N$, if, for
  each $\bar x\in\mathcal O$, it can be represented as
  \begin{align*}
    f(x)=\max_{t\in T}f_t(x)
  \end{align*}
  for all $x$ in some neighborhood $V\equiv V(\bar x)$ of $\bar x$,
  where the functions $f_t$ are of class $\mathcal C^2$, the index set
  $T$ is a compact topological space, and each $f_t$ and its
    partial derivatives up to order $k$ are continuous in
  $(t,x)\in T\times V$.
\end{dfn}
Next, we introduce the notion of extended second-order
differentiability.

\begin{dfn}\longthmtitle{Extended second-order differentiability
    \cite[Definition 13.1, \linebreak Page
    580]{RTR-RJBW:98}}\label{dfn:extended:second:order:differentiability}
  A real-valued function $f$ on an open subset $\mathcal O$ of
  $\Rat{n}$ is twice differentiable at $\bar x\in \mathcal O$ in the
  extended sense if it is differentiable and locally Lipschitz at
  $\bar x$, and $\nabla f$ is differentiable at $\bar x$ relative to
  its domain of existence $\mathcal D_{\nabla f}$, i.e., there exists
  $A\in\Rat{n\times n}$ such that
  \begin{align*}
    \nabla f(x)=\nabla f(\bar x)+A(x-\bar x)+o(\|x-\bar x\|), \quad\textup{for 
    all}\;x\in\mathcal D_{\nabla f}. 
  \end{align*}
  %
  The unique matrix $A$ satisfying this property is called the Hessian
  of $f$ at $\bar x$ in the extended sense and is likewise denoted by
  $\nabla^2 f(\bar x)$.
\end{dfn}

Extended second-order differentiability at a point no longer requires
the function to be differentiable on a neighborhood of it. Still, this
notion enables the first-order approximation of the function's
subgradients around that point, as clarified in the following result.

\begin{thm}\longthmtitle{Extended second-order differentiability
    characterization \cite[\linebreak Theorem 13.2, Page
    581]{RTR-RJBW:98}}\label{thm:second:order:differentiability}
  A real-valued function $f$ on an open subset $\mathcal O$ of
  $\Rat{n}$ is twice differentiable at $\bar x\in \mathcal O$ in the
  extended sense iff $f$ is finite and locally lower semicontinuous at
  $\bar x$ and the subgradient mapping
  $\partial f : \mathcal O\rightrightarrows\Rat{n}$ is differentiable
  at $x$. Namely, there exist $v\in\Rat{n}$ and $A\in\Rat{n\times n}$
  such that $\partial f(\bar x)=\{v\}$ and
  \begin{align*}
    \emptyset\ne\partial f(x)\subset v+A(x-\bar x)+o(\|x-\bar x\|)B(0,1) 
  \end{align*}
  %
  %
  for all $x$ in a neighborhood of $\bar x$. In addition, necessarily
  $v=\nabla f(\bar x)$ and $A=\nabla^2 f(\bar x)$.  \hfill$\blacksquare$
\end{thm}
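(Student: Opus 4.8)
The plan is to prove both implications of the stated equivalence, relying on Proposition~\ref{prop:Clarkegradiens:sv:subgradient} and on two classical facts about locally Lipschitz functions that I treat as known: (i) the \emph{gradient formula} $\partial f(x)=\{\lim_k\nabla f(x_k)\mid x_k\to x,\ x_k\in\mathcal D_{\nabla f}\}$, so that the identity $\bar\partial f(x)=\conv(\partial f(x))$ of Proposition~\ref{prop:Clarkegradiens:sv:subgradient} matches the defining formula for $\bar\partial f$ used above; and (ii) a locally Lipschitz function whose Clarke generalized gradient at a point reduces to a single vector is (strictly) differentiable there, with that vector as its gradient. I will additionally use the converse-type fact that a finite, lower semicontinuous function whose subgradient mapping is locally bounded near a point is locally Lipschitz there.

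\emph{From extended second-order differentiability to differentiability of $\partial f$.} Suppose $f$ is twice differentiable at $\bar x$ in the extended sense (Definition~\ref{dfn:extended:second:order:differentiability}): $f$ is finite and differentiable at $\bar x$, Lipschitz on a neighborhood $U$ of $\bar x$, and $\nabla f(x)=\nabla f(\bar x)+A(x-\bar x)+o(\|x-\bar x\|)$ for $x\in\mathcal D_{\nabla f}$. Local Lipschitzness makes $f$ continuous, hence locally lower semicontinuous, and makes $\partial f(x)\ne\emptyset$ for every $x\in U$. Given $\eps>0$, choose $\delta>0$ with $\|\nabla f(y)-\nabla f(\bar x)-A(y-\bar x)\|\le\eps\|y-\bar x\|$ for all $y\in\mathcal D_{\nabla f}\cap B(\bar x,\delta)$. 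For $x$ with $\|x-\bar x\|<\delta/2$ and any sequence $x_k\to x$ in $\mathcal D_{\nabla f}$, one has $x_k\in B(\bar x,\delta)$ for $k$ large, so passing to the limit gives $\|w-\nabla f(\bar x)-A(x-\bar x)\|\le\eps\|x-\bar x\|$ for $w=\lim_k\nabla f(x_k)$. By the gradient formula every element of $\partial f(x)$ is such a limit, and since the displayed estimate describes a convex set, $\bar\partial f(x)=\conv(\partial f(x))$ satisfies it too. Taking $x=\bar x$ gives $\partial f(\bar x)=\{\nabla f(\bar x)\}$, while letting $\eps\downarrow0$ yields $\emptyset\ne\partial f(x)\subset\nabla f(\bar x)+A(x-\bar x)+o(\|x-\bar x\|)B(0,1)$ for $x$ near $\bar x$. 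Thus $\partial f$ is differentiable at $\bar x$ with $v=\nabla f(\bar x)$ and $A=\nabla^2 f(\bar x)$; $A$ is pinned down by applying the inclusion to any selection $x\mapsto w(x)\in\partial f(x)$.

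\emph{From differentiability of $\partial f$ back to extended second-order differentiability.} Assume $f$ is finite and locally lower semicontinuous at $\bar x$ and that $v\in\Rat{n}$, $A\in\Rat{n\times n}$ satisfy $\partial f(\bar x)=\{v\}$ and $\emptyset\ne\partial f(x)\subset v+A(x-\bar x)+o(\|x-\bar x\|)B(0,1)$ on a neighborhood of $\bar x$. The inclusion shows $\partial f$ is bounded on a small ball about $\bar x$, which together with lower semicontinuity forces $f$ to be locally Lipschitz near $\bar x$. Then Proposition~\ref{prop:Clarkegradiens:sv:subgradient} gives $\bar\partial f(\bar x)=\conv(\partial f(\bar x))=\{v\}$, so by fact (ii) above $f$ is differentiable at $\bar x$ with $\nabla f(\bar x)=v$. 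Finally, for $x\in\mathcal D_{\nabla f}$ near $\bar x$ we have $\nabla f(x)\in\partial f(x)$, so the inclusion yields $\nabla f(x)=v+A(x-\bar x)+o(\|x-\bar x\|)=\nabla f(\bar x)+A(x-\bar x)+o(\|x-\bar x\|)$; hence $\nabla f$ is differentiable at $\bar x$ relative to $\mathcal D_{\nabla f}$, i.e. $f$ is twice differentiable at $\bar x$ in the extended sense, with $A=\nabla^2 f(\bar x)$.

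\emph{Expected main obstacle.} The delicate step is the first one in the second implication: passing from mere local boundedness of $\partial f$ (plus lower semicontinuity) to local Lipschitz continuity of $f$, which cannot invoke a Lipschitz structure directly and instead rests on the characterization of Lipschitz continuity via the triviality of the horizon subdifferential (equivalently, a subgradient mean-value inequality). Once local Lipschitzness is in hand, the remaining work is the uniform-in-$x$ control of the $o(\cdot)$ remainders when passing to limits of nearby gradients, which is routine.
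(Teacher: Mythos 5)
The paper offers no proof of this statement: it is quoted verbatim from Rockafellar and Wets (Theorem 13.2) and stamped as a known result, so there is no in-paper argument to measure yours against. Judged on its own, your two-sided reconstruction is essentially the standard one and is correct in substance, with one misstated auxiliary fact and one correctly flagged hard step. The ``gradient formula'' in your fact (i) is false as an equality: for $f(x)=|x|$ at $0$ the subgradient set $\partial f(0)$ is $[-1,1]$, while the limits of nearby gradients give only $\{-1,1\}$; what is true is the inclusion $\partial f(x)\subset\conv\{\lim_k\nabla f(x_k)\}=\bar\partial f(x)$ (Proposition~\ref{prop:Clarkegradiens:sv:subgradient} together with the definition of $\bar\partial f$). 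Your forward direction survives this because at the only place the formula is invoked you immediately pass to the convex hull and observe that the target set $v+A(x-\bar x)+\eps\|x-\bar x\|B(0,1)$ is convex, so only the true inclusion is needed; you should, however, restate fact (i) as an inclusion. In the converse direction, the genuinely nontrivial step --- upgrading nonemptiness and local boundedness of $\partial f$ near $\bar x$, together with lower semicontinuity, to local Lipschitz continuity of $f$ --- is exactly where the content of the theorem lives, and you correctly identify it and defer it to the horizon-subdifferential criterion (Rockafellar--Wets Theorem 9.13), which is the tool the original proof uses. The remaining ingredients (a singleton Clarke generalized gradient forces strict differentiability; $\nabla f(x)\in\partial f(x)$ at points of differentiability; uniqueness of $A$ obtained from nonempty selections of $\partial f$) are standard and correctly deployed, so the proposal is acceptable as a proof sketch that outsources the same lemmas the textbook proof does.
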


The next result establishes almost everywhere second-order
differentiability of lower-$\mathcal C^2$ functions in the extended
sense.

\begin{thm}\longthmtitle{Almost everywhere extended second-order
    differentiability of lo-\linebreak wer-$\mathcal C^2$ functions \cite[Theorem
    13.51, Page
    626]{RTR-RJBW:98}}\label{thm:ae:second:order:differentiability}
  Any lower-$\mathcal C^2$ function on an open set $\mathcal O$ of
  $\Rat{n}$ is twice differentiable in the extended sense at almost
  all points of $\mathcal O$, and the matrices $\nabla^2 f(x)$ are
  symmetric at all the points where they are defined.  \hfill$\blacksquare$
\end{thm}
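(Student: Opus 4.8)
The plan is to reduce the statement to Alexandrov's classical theorem on the almost-everywhere second-order differentiability of finite convex functions, exploiting the fact---essentially Rockafellar's characterization of lower-$\mathcal C^2$ functions---that such functions are \emph{locally weakly convex}. Since twice differentiability in the extended sense at a point depends only on the behaviour of $f$ near that point, and a countable union of Lebesgue-null sets is null, I first localize: for each $\bar x\in\mathcal O$ pick the neighborhood $V=V(\bar x)$, the compact index set $T=T(\bar x)$, and the family $\{f_t\}_{t\in T}$ of $\mathcal C^2$ functions provided by Definition~\ref{dfn:lower:Ck:fnc}, so that $f=\max_{t\in T}f_t$ on $V$; then shrink to an open ball $V'$ centered at $\bar x$ with $\cl(V')\subset V$. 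As $\mathcal O\subset\Rat{n}$ is second countable (Lindel\"of), countably many such balls $V'_k$ cover $\mathcal O$, so it suffices to prove the claim on each $V'_k$.

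On the compact set $T\times\cl(V'_k)$ the map $(t,x)\mapsto\nabla_x^2 f_t(x)$ is continuous, hence bounded in operator norm by some $c_k\ge 0$; in particular $\nabla_x^2 f_t(x)+c_kI$ is positive semidefinite for all $(t,x)\in T\times\cl(V'_k)$, so each $x\mapsto f_t(x)+\tfrac{c_k}{2}\|x\|^2$ is convex on the ball $V'_k$. Therefore $g_k:=f+\tfrac{c_k}{2}\|\cdot\|^2=\max_{t\in T}\bigl(f_t+\tfrac{c_k}{2}\|\cdot\|^2\bigr)$ is a finite convex function on $V'_k$ (a pointwise maximum of convex functions), hence locally Lipschitz there. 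By Alexandrov's theorem, $g_k$ is twice differentiable in the extended sense at almost every point of $V'_k$, with symmetric, positive semidefinite extended Hessian. Since $-\tfrac{c_k}{2}\|\cdot\|^2$ is $\mathcal C^\infty$ and extended second-order differentiability is stable under adding a $\mathcal C^2$ function---one simply adds the Taylor expansion of the smooth summand to the expansion of $\nabla g_k$ in Definition~\ref{dfn:extended:second:order:differentiability}, the domain of existence of the gradient being unchanged---the function $f=g_k-\tfrac{c_k}{2}\|\cdot\|^2$ is twice differentiable in the extended sense at almost every point of $V'_k$, with $\nabla^2 f(x)=\nabla^2 g_k(x)-c_kI$. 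Taking the union over $k$ proves the almost-everywhere assertion.

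For symmetry at \emph{every} point where $\nabla^2 f$ is defined, suppose $f$ is twice differentiable in the extended sense at $\bar x$ and choose $k$ with $\bar x\in V'_k$. By the same stability under $\mathcal C^2$ perturbation together with the uniqueness of the extended Hessian (Definition~\ref{dfn:extended:second:order:differentiability}, Theorem~\ref{thm:second:order:differentiability}), $g_k$ is then twice differentiable in the extended sense at $\bar x$ with extended Hessian $\nabla^2 f(\bar x)+c_kI$, so it suffices to show that the extended Hessian of a convex function is symmetric at any point where it exists. I expect this to be the main obstacle: monotonicity of $\partial g_k$ alone only forces the \emph{symmetric part} of $\nabla^2 f(\bar x)+c_kI$ to be positive semidefinite, so one must exploit the gradient (conservative, i.e.\ cyclically monotone) structure of $\partial g_k$ to eliminate the antisymmetric part. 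Concretely, I would use that on the full-measure set $\mathcal D_{\nabla g_k}$ near $\bar x$ one has $\nabla g_k(x)=\nabla g_k(\bar x)+(\nabla^2 f(\bar x)+c_kI)(x-\bar x)+o(\|x-\bar x\|)$, while the distributional Hessian $D^2 g_k$ of the convex function $g_k$ is a symmetric matrix-valued Radon measure; since the displayed expansion exhibits $\nabla^2 f(\bar x)+c_kI$ as the approximate density of $D^2 g_k$ at $\bar x$, symmetry of the measure forces symmetry of that matrix, hence of $\nabla^2 f(\bar x)$ (as $c_kI$ is symmetric). Equivalently, one argues that the circulation of $\nabla g_k$ around arbitrarily small loops centered at $\bar x$ vanishes, which upon inserting the first-order expansion of $\nabla g_k$ forces the antisymmetric part of $\nabla^2 f(\bar x)+c_kI$ to contribute only $o(\mathrm{area})$ and hence to vanish. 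Besides this step, the careful invocation of Alexandrov's theorem, the analytic heart of the result, is the other nontrivial ingredient.
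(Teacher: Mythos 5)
The paper does not prove this statement: it is quoted verbatim from Rockafellar--Wets (Theorem 13.51) and marked as cited, so there is no in-paper argument to compare against. Your proposal is correct and is essentially the standard proof underlying that reference: localize via Lindel\"of, use joint continuity of $(t,x)\mapsto\nabla_x^2 f_t(x)$ on $T\times\cl(V'_k)$ to add a quadratic $\tfrac{c_k}{2}\|\cdot\|^2$ making $f$ locally convex (the lower-$\mathcal C^2$ $\Leftrightarrow$ weakly convex equivalence), invoke Alexandrov's theorem, and transfer back using that extended second-order differentiability is stable under $\mathcal C^2$ perturbations. Your primary symmetry argument (pairing the first-order expansion of $\nabla g_k$ on $\mathcal D_{\nabla g_k}$ against $\nabla\phi_r$ for rescaled bumps and using symmetry of distributional second derivatives) does go through; I would only caution that the ``equivalently'' loop-circulation variant is the weaker of the two, since $\nabla g_k$ exists only almost everywhere and circulation along a fixed loop is not directly meaningful without an averaging over loops.
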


Although this result ensures a.e.~twice differentiability in the
extended sense, it does not necessarily guarantee continuous
differentiability. This is due to the fact that the definition of
extended differentiability only considers limits over the set where
the first derivative is guaranteed to exist and this set may not be
open. We will see later that this relaxes the continuous
differentiability requirement of the original gradient sampling (GS)
algorithm that was imposed on a set of full measure, as elaborated in
the survey article~\cite[Appendix 1]{JVB-FEC-ASL-MLO-LS:20}.
%
%

According to Definition~\ref{dfn:lower:Ck:fnc}, lower-$\mathcal C^1$
functions are defined locally as the pointwise maximum of a family of
$\mathcal C^1$ functions. The following result establishes that the
subdifferential of such a function is the convex hull of the gradients
of the maximizing functions from the family.

\begin{prop}\longthmtitle{Subgradients of lower-$\mathcal C^1$ functions
    \cite[Theorem 10.31, Page
    448]{RTR-RJBW:98}}\label{prop:lowerC1:subgradient}
  Assume that $f$ is lower-$\mathcal C^1$ on the open set
  $\mathcal O\subset \Rat{n}$ and let $f(x)=\max_{t\in T}f_t(x)$
    be its local representation around some $\bar x\in\mathcal O$.
  Then
  \begin{align*}
    \partial f(x)=\conv(\{\nabla f_t(x)\,|\,t\in T(x)\}),
  \end{align*}
  where $T(x):=\argmax_{t\in T}f_t(x)$. 
\end{prop}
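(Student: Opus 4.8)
\emph{Proof plan.} The target set is $K(x):=\conv\{\nabla f_t(x)\mid t\in T(x)\}$, and I would identify it with $\partial f(x)$ in three stages: compute the one-sided directional derivatives of $f$, use them to pin down the \emph{regular} subdifferential $\hat\partial f(x):=\{v\in\Rat{n}\mid f(y)\ge f(x)+\langle v,y-x\rangle+o(\|y-x\|)\ \text{as}\ y\to x\}$, and then pass to the limiting subdifferential $\partial f(x)$ (as defined above) by a convergence argument. Throughout I fix $\bar x$, its representing neighborhood $V\equiv V(\bar x)$ and family $\{f_t\}_{t\in T}$ from Definition~\ref{dfn:lower:Ck:fnc}, and work at an arbitrary $x\in V$ (the subdifferential at $x$ depends only on $f$ near $x$). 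Preliminary observations: $T(x)=\{t\in T\mid f_t(x)=f(x)\}$ is nonempty (compactness of $T$, continuity of $t\mapsto f_t(x)$) and closed, hence compact; its continuous image $\{\nabla f_t(x)\mid t\in T(x)\}$ is compact; so $K(x)$ is a nonempty compact convex set, and consequently $v\in K(x)$ if and only if $\langle v,d\rangle\le\sigma_{K(x)}(d)$ for every $d\in\Rat{n}$, where $\sigma_{K(x)}(d)=\max_{t\in T(x)}\langle\nabla f_t(x),d\rangle$ is the support function of $K(x)$.

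\emph{Directional derivative.} By joint continuity of $(t,x)\mapsto\nabla f_t(x)$ on the compact set $T\times B(\bar x,r)\subset T\times V$ for small $r>0$, the gradients are uniformly bounded by some $L$ there, so each $f_t$, hence $f$, is $L$-Lipschitz on $B(\bar x,r)$; in particular $f$ is continuous. I would then show $f$ is directionally differentiable with $f'(x;d)=\sigma_{K(x)}(d)$. For each $t\in T(x)$, $f(x+sd)\ge f_t(x+sd)=f(x)+s\langle\nabla f_t(x),d\rangle+o(s)$ gives $\liminf_{s\downarrow0}(f(x+sd)-f(x))/s\ge\langle\nabla f_t(x),d\rangle$, hence $\ge\sigma_{K(x)}(d)$. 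For the matching upper bound on the $\limsup$, choose $s_k\downarrow0$ attaining it, pick $t_k\in T(x+s_kd)$, extract $t_k\to t_\ast\in T$; joint continuity and continuity of $f$ give $f_{t_\ast}(x)=\lim_k f_{t_k}(x+s_kd)=\lim_k f(x+s_kd)=f(x)$, so $t_\ast\in T(x)$, and by the mean value theorem $(f(x+s_kd)-f(x))/s_k\le(f_{t_k}(x+s_kd)-f_{t_k}(x))/s_k=\langle\nabla f_{t_k}(x+\theta_ks_kd),d\rangle\to\langle\nabla f_{t_\ast}(x),d\rangle\le\sigma_{K(x)}(d)$.

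\emph{Regular and limiting subdifferentials.} For $K(x)\subseteq\hat\partial f(x)$: if $v=\sum_i\lambda_i\nabla f_{t_i}(x)$ is a convex combination with $t_i\in T(x)$, then $f(y)\ge\sum_i\lambda_i f_{t_i}(y)=f(x)+\langle v,y-x\rangle+o(\|y-x\|)$ (using $f_{t_i}(x)=f(x)$ and that a finite sum of $o(\|y-x\|)$ terms is $o(\|y-x\|)$), so $v\in\hat\partial f(x)$. Conversely, any $v\in\hat\partial f(x)$ satisfies $\langle v,d\rangle\le f'(x;d)=\sigma_{K(x)}(d)$ for all $d$, hence $v\in K(x)$; thus $\hat\partial f(x)=K(x)$ and $f$ is regular at $x$. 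Finally, $v\in\partial f(x)$ means $v=\lim_\nu v^\nu$ with $x^\nu\to x$, $v^\nu\in\hat\partial f(x^\nu)$, and $f(x^\nu)\to f(x)$ (the last being automatic by continuity); since eventually $x^\nu\in V$, $v^\nu\in K(x^\nu)$, so by Carath\'eodory $v^\nu=\sum_{i=0}^n\lambda_i^\nu\nabla f_{t_i^\nu}(x^\nu)$ with $t_i^\nu\in T(x^\nu)$ and $(\lambda_i^\nu)$ in the unit simplex. Passing to a subsequence, $\lambda_i^\nu\to\lambda_i$ (still in the simplex) and $t_i^\nu\to t_i\in T$; joint continuity of $f_t$ plus $f_{t_i^\nu}(x^\nu)=f(x^\nu)$ forces $t_i\in T(x)$, and joint continuity of $\nabla f_t$ gives $v=\sum_i\lambda_i\nabla f_{t_i}(x)\in K(x)$. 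With $K(x)=\hat\partial f(x)\subseteq\partial f(x)$ from the previous step, this yields $\partial f(x)=K(x)$, the claim. (Alternatively, once $K(x)\subseteq\hat\partial f(x)\subseteq\partial f(x)$ is in hand, one may invoke Clarke's generalized-gradient formula for pointwise maxima, $\bar\partial f(x)=K(x)$, together with Proposition~\ref{prop:Clarkegradiens:sv:subgradient} and $\partial f(x)\subseteq\bar\partial f(x)$, to close the sandwich.)

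\emph{Main obstacle.} The delicate part is the last step: one must track the active indices $t_i^\nu\in T(x^\nu)$ along the sequence and use the \emph{joint} continuity of $f_t$ and $\nabla f_t$ in $(t,x)$ --- precisely what the lower-$\mathcal C^1$ hypothesis guarantees --- to ensure both that limiting indices remain active at $x$ and that the associated gradients converge to gradients \emph{at $x$}; absent joint continuity the inclusion $\partial f(x)\subseteq K(x)$ can fail, so this is where the hypothesis is genuinely used. A secondary technical care is the $\limsup$ estimate in the directional-derivative step, which relies on compactness of $T$ to extract a convergent subsequence of near-maximizers.
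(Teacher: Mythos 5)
The paper does not prove this proposition; it is quoted verbatim from \cite[Theorem 10.31]{RTR-RJBW:98}, so there is no in-paper argument to compare against. Your blind reconstruction is correct and follows essentially the same route as the textbook proof: establish semidifferentiability with $f'(x;d)=\max_{t\in T(x)}\langle\nabla f_t(x),d\rangle$ (lower bound from the active functions, upper bound from near-maximizers at $x+s_kd$, compactness of $T$, the mean value theorem, and joint continuity of $\nabla f_t$), deduce $\hat\partial f(x)=K(x)$ via the support-function characterization, and then close the limiting subdifferential by Carath\'eodory plus a subsequence argument that keeps the limiting indices active. All the individual steps check out, and you correctly identify where the lower-$\mathcal C^1$ hypothesis (joint continuity on a compact index set) is genuinely needed. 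The only caveat worth flagging is cosmetic: the paper's displayed definition of $\partial f$ carries an apparent sign slip ($+\langle v^\nu,x-x^\nu\rangle$ rather than $-\langle v^\nu,x-x^\nu\rangle$); you work with the standard Rockafellar--Wets convention, which is the intended one, so nothing in your argument is affected.
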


\section{Gradient sampling algorithm for subsmooth
  functions}\label{sec:GS:convergence}

In this section, we provide an algorithm to solve non-smooth
optimization problems where the objective function is expressed as the
pointwise maximum of a parameterized function class. The approach
builds on a modification of the GS algorithm
introduced in \cite{JVB-ASL-MLO:05} to optimize locally Lipschitz
functions.

\subsection{Objective function class--motivation from distributionally robust coverage optimization}

We assume the objective function $f:\Rat{n}\to\Rat{}$ takes the form 
\begin{align}\label{f:pointwise:max}
  f(x)=\max_{\theta\in\Theta}F(x,\theta),
\end{align}  
where $\Theta\subset\Rat{d}$ is a \textit{compact} set and
$F:\Rat{n}\times\Theta\to\Rat{}$ is continuous. 
Our motivation to consider
this class comes from distributionally robust formulations of the optimal 
coverage problem. In its typical form, this problem seeks to determine the positions $x_1,\ldots,x_J$ of $J$ agents in $\Rat{m}$ so that they minimize the spatial coverage cost 
\begin{align*}
G(x)\equiv G(x;P_\rho) & := \bE_{y\sim P_\rho}\big[\min_{j=1,\ldots,J}\|x_j-y\|^q\big] \\
&\:= \int_Q\min_{j=1,\ldots,J}\|x_j-y\|^q\rho(y)dy\equiv\int_QH(x,y)\rho(y)dy,
\end{align*}
over a bounded convex set $Q\subset\Rat{m}$, where $x=(x_1,\ldots,x_J)\in Q^J$ and  $q\ge 1$ is a penalty exponent. Here $P_\rho$ is the probability distribution of some quantity of interest over the environment $Q$, which we assume to have a density  $\rho$. In a distributionally robust version of the problem where $P_\rho$ is unknown and there is only access to $N$ i.i.d. samples $y^1,\ldots,y^N$ of it, one can build a Wasserstein ambiguity set $\mathcal P$ with the empirical distribution $P^N=\frac{1}{N}\sum_{i=1}^N\delta_{y^i}$ as a reference measure, and solve the distributionally robust problem  
\begin{align*}
\min_{x\in Q^J}\max_{P\in\mathcal P}G(x;P).
\end{align*}
In this formulation $\mathcal P$ is the Wasserstein ball $\mathcal B_p(P^N,\eps):=\{P\in \mathcal P(Q)\,|\,W_p(P^N,P)\le\eps\}$, where $\mathcal P(Q)$ denotes the set of probability distributions on $Q$ and $W_p(P^N,P)$ the Wasserstein distance of order $p\ge 1$ between $P^N$ and $P$~\cite{CV:08}. Then standard duality results from Wasserstein distributionally robust optimization (cf. \cite{PME-DK:17}, \cite{JB-KM:19}) yield the equivalent reformulation
\begin{align*}
\min_{x\in Q^J,\lambda\ge 0}\Big\{\frac{1}{N}\sum_{i=1}^N\max_{y\in Q}\{H(x,y)+\lambda(\eps-\|y^i-y\|^p)\}\Big\}.
\end{align*}
This in turn is equivalent to the problem 
\begin{align*}
\min_{x\in Q^J,\lambda\ge 0}\max_{y\equiv(y_1,\ldots,y_N)\in Q^N}\frac{1}{N}\sum_{i=1}^N(H(x,y_i)+\lambda(\eps-\|y^i-y_i\|^p))\equiv \min_{x\in Q^J,\lambda\ge 0}\max_{y\in Q^N} F((x,\lambda),y),
\end{align*}
which minimizes a function of the form \eqref{f:pointwise:max} over the convex subset $Q^J\times\RgeO$ of $\Rat{n}\equiv\Rat{mJ+1}$. This formulation entails a high-dimensional 
non-convex inner maximization problem. A robustification of the coverage problem that bypasses this complexity issue (cf. ~\cite{DB-JC-SM:23-cdc}), considers a finite-dimensional parameterization of the density through basis functions that are supported on $Q$. This yields the alternative distributionally robust coverage problem
\begin{align}\label{DRO:coverage:general}
	\min_{x\in
		Q^J}\max_{\theta\in\Theta}
	\int_Q\min_{j=1,\ldots,J}\|x_j-y\|^q\rho_\theta(y)dy.
\end{align}
Here each density $\rho_\theta$ is expressed as a linear combination 
of basis functions $\varphi_1,\ldots,\varphi_d$, which are multiplied by 
the parameters $\theta_1,\ldots,\theta_d$ that comprise the vector
$\theta\in\Theta$, where $\Theta$ is a convex subset of $\Rat{d}$. Thus, the DRO problem is equivalently written
\begin{align} \label{linear:in:theta}
	\min_{x\in Q^J}\max_{\theta\in\Theta}
	\langle c(x),\theta \rangle,\qquad c(x):=(c_1(x),\ldots,c_d(x)),  
\end{align}
which is clearly the minimization of a function of the form~\eqref{f:pointwise:max} and  entails a linear inner maximization problem. The particular basis functions that were used in \cite{DB-JC-SM:23-cdc} 
are Haar wavelets, which are piece-wise constant over rectangular partitions of the domain and facilitate the construction of data-driven 
ambiguity sets of densities (cf.~\cite{DB-JC-SM:22-cdc}).

\subsection{Modified gradient sampling algorithm}

General functions of the form \eqref{f:pointwise:max}
cannot be optimized through the classical Gradient Sampling (GS) 
algorithm since determining their value at any given point requires 
the solution of a maximization problem, which is typically only solved approximately. As a result, we cannot compute the gradients of the 
function exactly at points where it is differentiable to approximate 
its Goldstein $\eps$-subdifferential. Neither we can always check the
differentiability of $f$ at a specific point, which is required in the
original version of the GS algorithm. Further, not knowing the exact
values of the function poses challenges on how to perform line search
to determine the stepsize.
%
%

%
Our way to tackle these challenges assumes the availability of an
oracle, which represents an algorithm that carries out an approximate
maximization of $F$ with respect to $\theta$. This oracle returns, for
each $x$ and a user-defined accuracy $\delta$, a value $\theta_\star$
satisfying $\dist(\theta_\star,\theta_\star(x))<\delta$, where
\begin{align*}
  \theta_\star(x):=\argmax_{\theta\in\Theta}F(x,\theta),
\end{align*}
is \textit{the set} of maximizers of $F$ with respect to $\theta$ for
each $x\in\Rat{n}$. Since $F$ is continuous, $\theta_\star(x)$ is
nonempty. 
%

\begin{rem} 
    \longthmtitle{Oracle examples} Here we provide examples of oracles
  that are guaranteed to return a certificate in finite, and in
  several cases even polynomial, time. When $\Theta=\{1,\ldots,N\}$ is
  finite, an exact certificate $\theta_\star$ with
  ${\rm dist}(\theta_\star,\theta_\star(x))=0$ can be directly
  obtained for each $x$ by sorting the corresponding values of
  $F$. Similarly, when $F$ is linear in $\Theta$, i.e.,
  $F(x,\theta)=\sum_{i=1}^d\theta_iF_i(x)$ and $\Theta$ is a convex
  polytope, the maximization of $\theta\mapsto F(x,\theta)$ is a
  linear program. Thus, an exact solution can again be obtained using
  the simplex method.
  
  When the parameter set $\Theta$ satisfies a finite set of convex
  equalities and inequalities, and $F$ is concave in $\theta$, then a
  sequence converging to $\theta_\star(x)$ can be obtained by solving
  a finite-dimensional convex optimization problem via an iterative
  algorithm. Such algorithms include interior point methods, which
  have polynomial complexity \cite[Chapters 5]{YN:18} and augmented
  Lagrangian methods~\cite{DPB:82}. For every accuracy
  $\varepsilon>0$, such algorithms commonly return after $k$
  iterations a certificate $\theta^k$ with
  \begin{align}  \label{optimal:value:certificate}
  	F(x,\theta_{x,\star})-F(x,\theta^k)\le\varepsilon,
  \end{align}
  for some $\theta_{x,\star}\in\theta_\star(x)$. When
  $\theta\mapsto F(x,\theta)$ is further strongly concave, this point
  can have a prescribed distance from the unique optimizer $\theta_{x,\star}$ of the
  problem (in this case $\theta_\star(x)=\{\theta_{x,\star}\}$). To clarify why, note first that strong concavity of
  $F(x,\cdot)$ is equivalent to strong convexity of
  \begin{align*}
  	\theta\mapsto g(\theta):=-F(x,\theta).
  \end{align*}
  This, in turn, is equivalent to requiring that for every
  $\theta\in\Theta$ and $\zeta\in\partial g(\theta)$ (the subdifferential of $g$ at $\theta$)
  \begin{align*}
  	g(\eta)-g(\theta)\ge
  	\langle\zeta,\eta-\theta\rangle+\frac{1}{2}\rho\|\eta-\theta\|^2\quad\text{for
  		all}\;\eta\in\Theta, 
  \end{align*}
  where $\rho$ is the modulus of strong convexity (cf. \cite[Theorem
  6.1.2, Page 280]{JBHU-CL:93}). By optimality at $\theta_{x,\star}$ and
  strong concavity of $F(x,\cdot)$, we have
  \begin{align*}
  	\frac{1}{2}\rho\|\theta^k-\theta_{x,\star}\|^2
  	\le\varepsilon\iff \|\theta^k-\theta_{x,\star}\|\le
  	\Big(\frac{2\varepsilon}{\rho}\Big)^{\frac{1}{2}}, 
  \end{align*}
  which yields the desired oracle. Namely, it suffices to select
  $\varepsilon=\frac{\delta^2\rho}{2}$ for the optimal-value
  certificate~\eqref{optimal:value:certificate} to ensure that
  ${\rm dist}(\theta_\star,\theta_\star(x))\equiv{\rm dist}(\theta^k,\{\theta_{x,\star}\})\le\delta$.
  
  When $\theta\mapsto F(x,\theta)$ is only concave (as is the case for
  example in distributionally robust coverage, where it takes the
  linear in $\theta$ form \eqref{linear:in:theta}), one can instead
  consider the regularized problem
  \begin{align*}
  	\min_{x\in\Rat{n}}\max_{\theta\in\Theta} F_\rho(x,\theta),
  	\qquad F_\rho(x,\theta):=F(x,\theta)-\frac{1}{2}\rho\|\theta\|^2.
  \end{align*}
  In particular, since $\Theta$ is compact, for any $\epsilon>0$ we may select $\rho\in(0,2\epsilon/\max_{\theta\in\Theta}\|\theta\|^2]$ to get 
  \begin{align*}
  	f(x)-\epsilon\le
  	f_{\rho}(x):=\max_{\theta\in\Theta}F_{\rho}(x,\theta)\le
  	f(x)\quad\text{for all}\;x\in \Rat{n}. 
  \end{align*} 
  Then, when $f$ and $F$ satisfy the requirements of our  main convergence result (Theorem~\ref{thm:GS:convergence}), $f_\rho$ and $F_\rho$ also satisfy the same requirements, and thus the algorithm converges to a stationary point $x_\star^\rho$ of $f_\rho$, whose value is $\epsilon$-close to that of~$f$. 
\end{rem}

We also make the following assumption about the differentiability
properties of~$F$.

\begin{assumption}\longthmtitle{Regularity of
    $F$}\label{assumptions:for:F}
  
  \noindent (i) There is an open full-measure set $\mathcal D$ of
  $\Rat{n}$ on which the functions $x\mapsto F(x,\theta)$ are twice
  continuously differentiable in $x$, and $F(x,\theta)$,
  $\nabla_x F(x,\theta)$, and $\nabla_x^2 F(x,\theta)$ are 
  continuous in $(x,\theta)\in\mathcal D\times\Theta$. Hence, $f$ is
  lower-$\mathcal C^2$ on $\mathcal D$;

  \noindent (ii) For each bounded subset $S$ of $\Rat{n}$, the
  functions $x\mapsto F(x,\theta)$ are Lipschitz with respect to
  $x\in S$
  %
  %
  with constant $L_F^x(S)$, uniformly with respect to
  $\theta\in\Theta$ and, for each $x\in\Rat{n}$, the function
  $\theta\mapsto F(x,\theta)$ is Lipschitz with respect to $\theta$
  with constant $L_F^\theta(x)$;
 
  \noindent (iii) For each $x\in\mathcal D$, there exists
  $L_{\nabla_xF}^\theta(x)>0$ such that the function
  $\theta\mapsto\nabla_x F(x,\theta)$ is Lipschitz with respect to
  $\theta$ with constant $L_{\nabla_xF}^\theta(x)$.
\end{assumption}

Note that it follows from
Theorem~\ref{thm:ae:second:order:differentiability} that $f$ is twice
differentiable in the extended sense on a full-measure subset of
$\mathcal D$, and hence, also of $\Rat{n}$, which we henceforth denote
as $\mathcal D_f$. 
We also assume that $f$ is lower bounded.

\begin{assumption}\longthmtitle{Boundedness of $f$} \label{assumption:for:f}
	The function $f$ in~\eqref{f:pointwise:max} is 
	lower bounded.
\end{assumption}

We are now ready to provide our modified GS (mGS) algorithm for
functions of the form \eqref{f:pointwise:max} that satisfy
Assumptions~\ref{assumptions:for:F} and~\ref{assumption:for:f}.

\smallskip

\noindent \textbf{(Modified) Gradient Sampling Algorithm}

\smallskip
\hrule
\smallskip

\noindent \textbf{Step 0:} \longthmtitlealt{Initialization} 
\smallskip

\noindent Select $x^1\in\Rat{n}$, $\alpha,\beta,\gamma\in(0,1)$,
$\eps_1, \nu_1>0$, $\mu,\vartheta\in (0,1)$, $m\in\{n+1,n+2,\ldots\}$,
and approximation parameters $\delta_k\searrow 0$. Set
$k:=1$. 
    
  \smallskip

\noindent \textbf{Step 1:} \longthmtitlealt{Approximation of the
 Goldstein $\eps$-subdifferential by gradient sampling} \smallskip

\noindent Sample $x^{k1},\ldots,x^{km}$ independently and uniformly
from $B(x^k,\eps_k)$.  \smallskip
 
\noindent \textbf{If} $\{x^{ki}\}_{i=1}^m\not\subset\mathcal D$
\textbf{Stop}.  \smallskip 

\noindent \textbf{Else}, set 
\begin{subequations}\label{Gk:dfn}
  \begin{align}\label{Gk:dfn:first:part}
    G_k:=\conv(\{\nabla_x F(x^{k1},\theta_\star^{k1}),\ldots,
    \nabla_x F(x^{km},\theta_\star^{km})\}),
  \end{align} 
  where
  \begin{align}\label{Gk:dfn:second:part}
    \theta_\star^{ki}\in B\Big(\theta_\star(x^{ki}),
    \frac{\delta_k}{L_{\nabla_xF}^\theta(x^{ki})}\Big),\quad i=1,\ldots,m,
  \end{align}
  are determined by the oracle that maximizes $F$ with respect
  to~$\theta$. 
\end{subequations}
\smallskip
%
%

\noindent  \textbf{Step 2:} \longthmtitlealt{Search direction computation} 
\smallskip

\noindent Find the optimizer $g^k$ of the quadratic program 
\begin{align*}
  \min & \; \|g\|^2 \\
  {\rm s.t.} &\; g\in G_k.
\end{align*} 

\noindent \textbf{Step 3:} \longthmtitlealt{Sampling radius update} 
\smallskip

\noindent \textbf{If} $\|g^k\|\le\nu_k$, set $t_k:=0$,
$\nu_{k+1}:=\vartheta\nu_k$, and $\eps_{k+1}:=\mu\eps_k$, select
any $d^k\in\S^{n-1}(0,1)$, and \textbf{go to} Step 5.  \smallskip
%
%

\noindent \textbf{Else}, set $\nu_{k+1}:=\nu_k$, $\eps_{k+1}:=\eps_k$, 
and $d^k:=-g^k/\|g^k\|$.
\smallskip

\noindent \textbf{Step 4:} \longthmtitlealt{Limited Armijo line search} 
\smallskip

\noindent (i) Choose an initial step size
$t\equiv \subscr{t}{$k$,init}\ge \subscr{t}{$k$,min}:=\gamma\eps_k/3$.
\smallskip

\noindent (ii) Set the tolerance level
$c_k:=\gamma(1-\alpha)\beta\|g^k\|\eps_k/3$, and pick
\begin{subequations}\label{theta:star:bounds}
  \begin{align}
    \theta_\star^k & \in B\Big(\theta_\star(x^k),
                     \frac{c_k}{4L_F^\theta(x^k)}\Big) \\
    (\theta_\star^k)' & \in B\Big(\theta_\star(x^k+td^k),
                        \frac{c_k}{4L_F^\theta(x^k+td^k)}\Big).
  \end{align}
  \smallskip
  %
\end{subequations}
\smallskip

\noindent (iii) \textbf{If} 
\begin{align*}
  F(x^k+td^k,(\theta_\star^k)')\le F(x^k,\theta_\star^k)-\beta t\|g^k\|
  +\frac{c_k}{2},
\end{align*}
%
set $t_k:=t$ and \textbf{go to} Step 5.
\smallskip

\noindent (iv) \textbf{If} $\gamma t<\subscr{t}{$k$,min}$, set $t_k:=0$ and \textbf{go to} Step 5.
\smallskip

\noindent (v) Set $t:=\gamma t$ and \textbf{go to} (ii).  \smallskip

\noindent \textbf{Step 5:} \longthmtitlealt{Update} 
\smallskip

\noindent Set $x^{k+1}:=x^k+t_kd^k$, $k:=k+1$ and \textbf{go to} Step
1.
\smallskip
\hrule
\medskip

We next offer an intuitive description of the steps of the algorithm.
Step~0 of the mGS algorithm contains the initialization of the
decision variable and the initial tuning of the parameters. These
parameters include the tolerances $\eps_k$ for the gradient sampling
radius and $\nu_k$ for the size of the minimum-norm element of
$G_k$. The initial values of these tolerances are set in Step~0 and
their subsequent values are obtained using the discount factors $\mu$
and $\vartheta$ in Step~3 of the algorithm. Step~1 approximates the
Goldstein $\eps$-subdifferential of $f$ through the set $G_k$ generated
by approximations of sampled gradients of $F$, while Step~2 computes
the minimum-norm element of this set. Step~3 is responsible for
reducing the sampling radius and minimum-norm element tolerance when
getting closer to Clarke stationarity.  Step~4 performs a line search
to determine the gradient step using approximations of the objective
function. Finally, Step~5 updates the values of $x^k$ based on the
chosen stepsize and search direction. 

\begin{rem}\longthmtitle{Stopping criterion}
  {\rm It is worth noting that the ``if'' condition in Step~1 is
    verified over the set $\mathcal D$, where the family of functions
    $x\mapsto F(x,\theta)$ is differentiable, instead of the smaller
    set $\mathcal D_f$, where $x\mapsto f(x)$ is
    differentiable. Checking whether a sampled point belongs to
    $\mathcal D$ is in principle easier, as this set is predefined
    based on our standing assumptions about $F$ and can often be
    described by closed-form algebraic conditions. We illustrate this
    in the example of distributionally robust coverage optimization on the real line 
    in Section~\ref{sec:example}. On the other hand, although we are aware 
    of the existence of the set $\mathcal D_f$ and its properties, an explicit 
    description of it might not be available, which in practice complicates the problem of deciding whether a sampled point belongs to $\mathcal D_f$ or not.}
  \oprocend
\end{rem}

\subsection{Convergence of the mGS algorithm}
%
%

  After the parameters and initial condition of the algorithm are
  fixed at Step 0, the sampling step (Step 1) introduces randomness at
  each iteration. As a result, the runs of the algorithm generate a
  Markov chain (cf. Appendix~\ref{appendix:MP}).  The following result
  establishes the convergence properties of the mGS algorithm with respect
  to the induced probability measure of the Markov
    chain runs. 

\begin{thm}\longthmtitle{Convergence of the {mGS}
    algorithm}\label{thm:GS:convergence}
  Let $f$ of the form~\eqref{f:pointwise:max}
  satisfy Assumption~\ref{assumption:for:f}, $F$ satisfy Assumption~\ref{assumptions:for:F}, and select the parameters and initial condition of the mGS algorithm according to Step 0. Then, with probability one,
  the mGS algorithm does not stop and
  $\nu_k,\eps_k\searrow 0$.  In addition, every accumulation point of
  $\{x^k\}$ is Clarke stationary for~$f$.
\end{thm}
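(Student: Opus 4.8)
The plan is to interleave a deterministic sufficient-decrease analysis of the line search with a probabilistic argument controlling the random sampling, organising the whole proof around whether the reduction branch of Step~3 (the branch with $\|g^k\|\le\nu_k$) fires finitely or infinitely often. The deterministic part is routine; the probabilistic part, and in particular making the ``probability zero'' conclusions survive the passage from one point to an uncountable family of candidate limit points, is the hard part.

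First I would record the structural facts and two estimates. The algorithm can halt only in Step~1, which requires one of the $m$ points drawn uniformly from $B(x^k,\eps_k)$ to land in $\mathcal D^c$; since $\mathcal D$ has full measure this has conditional probability $0$ given the history, so almost surely the algorithm never stops; and since each Step~4 call backtracks only finitely often ($t$ is scaled by $\gamma\in(0,1)$ and the loop exits once $\gamma t<\gamma\eps_k/3$), it produces an infinite sequence $\{x^k\}$. Next, \emph{(over-approximation)} on $\mathcal D$ the function $f$ is lower-$\mathcal C^2$, so Proposition~\ref{prop:lowerC1:subgradient} (with $T(x)=\theta_\star(x)$) and Proposition~\ref{prop:Clarkegradiens:sv:subgradient} give $\bar\partial f(x^{ki})=\conv\{\nabla_xF(x^{ki},\theta):\theta\in\theta_\star(x^{ki})\}$; combining this with the oracle bound~\eqref{Gk:dfn:second:part} and Assumption~\ref{assumptions:for:F}(iii) yields $\nabla_xF(x^{ki},\theta_\star^{ki})\in\bar\partial f(x^{ki})+B(0,\delta_k)\subset\bar\partial_{\eps_k}f(x^k)+B(0,\delta_k)$, hence $G_k\subset\bar\partial_{\eps_k}f(x^k)+B(0,\delta_k)$. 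And \emph{(descent)} if $t_k\ne0$ then $\|g^k\|>\nu_k>0$ and $t_k\ge\gamma\eps_k/3$; feeding the accepted Armijo inequality of Step~4(iii) through the bounds~\eqref{theta:star:bounds} (with Assumption~\ref{assumptions:for:F}(ii), so $F(x^k,\theta_\star^k)\ge f(x^k)-c_k/4$ and $F(x^k+t_kd^k,(\theta_\star^k)')\ge f(x^k+t_kd^k)-c_k/4$) together with $F(x^k,\theta_\star^k)\le f(x^k)$ and $c_k=\gamma(1-\alpha)\beta\|g^k\|\eps_k/3$, gives $f(x^{k+1})\le f(x^k)-\rho_1\eps_k\|g^k\|$ with $\rho_1=(1+3\alpha)\beta\gamma/12>0$, while $f(x^{k+1})=f(x^k)$ if $t_k=0$; in particular $\{f(x^k)\}$ is non-increasing and, by Assumption~\ref{assumption:for:f}, convergent.

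Since $\eps_{k+1}\in\{\eps_k,\mu\eps_k\}$ and $\nu_{k+1}\in\{\nu_k,\vartheta\nu_k\}$ are non-increasing, we have $\eps_k,\nu_k\searrow0$ iff the reduction branch fires infinitely often, which I would prove holds almost surely by contradiction. If it fired only finitely often, then for $k\ge K$ we would have $\eps_k\equiv\bar\eps$, $\nu_k\equiv\bar\nu$, $\|g^k\|>\bar\nu$, so the descent estimate and lower-boundedness of $f$ force only finitely many descent steps, whence $x^k\equiv\bar x$ for all large $k$. The crux is to show this event has probability zero, and this is where the care flagged in the Introduction is required. Since $\bar\eps,\bar\nu$ lie in the countable sets $\{\mu^j\eps_1\}_j$, $\{\vartheta^j\nu_1\}_j$ and $\Rat n$ is covered by countably many balls, it suffices to bound, for each ball $A$ and pair $(\bar\eps,\bar\nu)$, the probability of getting trapped in that configuration. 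On such a cell, Carath\'eodory's theorem applied to $G_{\bar\eps}(\bar x)=\cl\conv(\nabla f(B(\bar x,\bar\eps)\cap\mathcal D_f))$---together with continuity of $\nabla f$ on $\mathcal D_f$, which follows from Assumption~\ref{assumptions:for:F}(i), Proposition~\ref{prop:lowerC1:subgradient} and Theorem~\ref{thm:second:order:differentiability}---shows that a single fresh round of $m\ge n+1$ i.i.d.\ samples has probability bounded below, uniformly over $\bar x\in A$, of producing a $G_k$ whose minimum-norm element either has norm $\le\bar\nu$ (triggering a reset) or defines a direction along which the limited Armijo search accepts a nonzero step. By independence across iterations and the second Borel--Cantelli lemma, one of these happens eventually, contradicting the trapping; hence $\eps_k,\nu_k\searrow0$ almost surely.

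Finally, working on this almost sure event, I would show every accumulation point $\bar x$ of $\{x^k\}$ is Clarke stationary. At a reset iteration $\ell$, the over-approximation estimate gives $w^\ell\in\bar\partial_{\eps_\ell}f(x^\ell)$ with $\|w^\ell\|\le\nu_\ell+\delta_\ell\to0$; writing $w^\ell$ via Carath\'eodory as a convex combination of elements of $\bar\partial f$ at points in $B(x^\ell,\eps_\ell)$ and invoking local boundedness and outer semicontinuity of the Clarke subdifferential (with $\eps_\ell\to0$), any accumulation point reached along reset iterations must satisfy $0\in\bar\partial f(\bar x)$. For a general $\bar x$, suppose $0\notin\bar\partial f(\bar x)$; outer semicontinuity furnishes $U\ni\bar x$ and $\bar\eps_0,\delta_0>0$ with $\dist(0,\bar\partial_{\bar\eps_0}f(x))\ge\delta_0$ on $U$, so by the over-approximation estimate and $\eps_k,\delta_k,\nu_k\to0$ one gets $\|g^k\|\ge\delta_0/2>\nu_k$ whenever $x^k\in U$ and $k$ is large, hence such iterations are never resets; the residual possibility that a non-stationary $\bar x$ is approached only by non-reset iterations is excluded by the same countable-cover plus Borel--Cantelli scheme as above, now applied to a countable family of balls exhausting the candidate non-stationary accumulation points. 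The main obstacle, as indicated, is exactly these two probabilistic steps: proving the per-round sampling estimate quantitatively and locally uniformly (via Carath\'eodory and continuity of $\nabla f$ on $\mathcal D_f$), and assembling it through the countable cover so that the ``bad'' events genuinely have probability zero over the uncountable set of candidate limit points.
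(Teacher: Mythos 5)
Your treatment of the halting event, the over-approximation $G_k\subset\bar\partial_{\eps_k}f(x^k)+B(0,\delta_k)$, the sufficient-decrease estimate, and above all the proof that $\nu_k,\eps_k\searrow 0$ almost surely is essentially the paper's argument: the paper also reduces to a countable family of ``trapped'' events indexed by the discount level, the trapping index, and a countable cover of $\Rat{n}$ by balls adapted to the local behavior of $\nabla f$ relative to $\mathcal D_f$ (its Corollary on the countable cover), and then shows that on each cell a fixed positive-measure set of $m$-tuples forces either a reset or an accepted step of size at least $\gamma\eps_k/3$, so that the conditional failure probability per round is bounded away from one. Two small cautions there: the balls of your cover must be \emph{adapted} (radius depending on the center through the continuity modulus of $\nabla f$ on $\mathcal D_f$ at the Carath\'eodory points), so you need a Lindel\"of extraction rather than a pre-specified cover; and the rounds are not independent, so ``second Borel--Cantelli'' should be replaced by iterating the conditional bound $\bP(\text{round }k\text{ bad}\mid\text{history})\le 1-p_A$, which is exactly what the paper's Markov-chain appendix does. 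Your observation that trapping forces $x^k$ to be eventually constant (rather than merely convergent, as the paper derives from summability) is a legitimate simplification.

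The genuine gap is in the last step. For a non-stationary accumulation point $\bar x$ you correctly show that, for large $k$, iterations with $x^k$ near $\bar x$ have $\|g^k\|\ge\delta_0/2>\nu_k$ and hence are never resets, but the ``residual possibility'' cannot be excluded by rerunning the countable-cover/sampling scheme: once $\eps_k\searrow 0$, a good sampling round near $\bar x$ only forces an accepted step $t_k\ge\gamma\eps_k/3\to 0$, which contradicts nothing (unlike in the trapping argument, where $\eps_k$ was frozen and $t_k$ had to vanish). The correct tool here is deterministic, given $\nu_k\to 0$ and the summability $\sum_k\|x^{k+1}-x^k\|\,\|g^k\|<\infty$ from the decrease estimate (note you should state that estimate in terms of $t_k=\|x^{k+1}-x^k\|$ rather than $\eps_k$, since $t_{k,\mathrm{init}}$ is not a priori bounded above by a multiple of $\eps_k$): since resets occur infinitely often and, for large $k$, never occur while $x^k$ lies in a neighborhood $U$ of $\bar x$ on which $\|g^k\|\ge\delta_0/2$, the iterates must shuttle infinitely often between a smaller neighborhood $U'\ni\bar x$ and the complement of $U$; each such excursion contributes at least $\tfrac{\delta_0}{2}\dist(U',\bdry(U))>0$ to $\sum_k\|x^{k+1}-x^k\|\,\|g^k\|$, contradicting its finiteness. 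This is the argument the paper imports from Kiwiel to establish $\liminf_k\max\{\|x^k-x_\star\|,\|g^k\|\}=0$ before applying the stationarity lemma; without it (or an equivalent), your proof of the final claim does not close.
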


The proof of convergence for the mGS algorithm follows the reasoning
in~\cite{KCK:07,KCK:10}. To clarify the required technical
modifications and keep the presentation self-contained, we provide
here the proofs of the most relevant results that are needed. We start
by establishing certain further properties of the functions $f$ and
$F$ in~\eqref{f:pointwise:max} that are a consequence of
Assumption~\ref{assumptions:for:F}.

\begin{prop}\label{prop:lower:C2:implications}
\longthmtitle{Regularity implications}  
  Let $F$ satisfy Assumption~\ref{assumptions:for:F}. Then:

  \noindent (i) The gradient of $f$ is well defined for each
  $x\in \mathcal D_f$;

  \noindent (ii) The function $f$ is locally Lipschitz on $\Rat{n}$;

  \noindent (iii) For each $\bar x\in\mathcal D_f$, the gradient of
  $f$ is continuous relative to $\mathcal D_f$, i.e., for every
  $\eps>0$ there exists $\delta>0$ such that
  \begin{align*}
    \|\nabla f(\bar x)-\nabla f(x)\|<\eps \quad\textup{for all}\; x\in B(\bar 
    x,\delta)\cap \mathcal D_f.  
  \end{align*}      
\end{prop}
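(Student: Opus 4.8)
The plan is to dispose of (ii) and (i) immediately and concentrate on (iii). For (ii) I would use the elementary fact that a finite pointwise maximum of functions sharing a common Lipschitz modulus is Lipschitz with that modulus: given $x_0\in\Rat{n}$, apply Assumption~\ref{assumptions:for:F}(ii) on the bounded set $S:=B(x_0,1)$, so that for $x,x'\in S$ and a maximizer $\theta_x$ realizing $f(x)=F(x,\theta_x)$ one has $f(x)-f(x')\le F(x,\theta_x)-F(x',\theta_x)\le L_F^x(S)\|x-x'\|$, and symmetrically; hence $f$ is Lipschitz on a neighborhood of each point, i.e.\ locally Lipschitz. For (i) I would observe that $\mathcal D_f$ was introduced (via Theorem~\ref{thm:ae:second:order:differentiability} applied on $\mathcal D$, where $f$ is lower-$\mathcal C^2$ by Assumption~\ref{assumptions:for:F}(i)) precisely as the set of points at which $f$ is twice differentiable in the extended sense, and Definition~\ref{dfn:extended:second:order:differentiability} requires ordinary differentiability at such points; hence $\nabla f(\bar x)$ exists for every $\bar x\in\mathcal D_f$.

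For (iii), fix $\bar x\in\mathcal D_f$. Since $f$ is twice differentiable at $\bar x$ in the extended sense, Theorem~\ref{thm:second:order:differentiability} gives $\partial f(\bar x)=\{\nabla f(\bar x)\}$ and, with $A:=\nabla^2 f(\bar x)$,
\[
  \emptyset\ne\partial f(x)\subseteq \nabla f(\bar x)+A(x-\bar x)+o(\|x-\bar x\|)\,B(0,1)
\]
for all $x$ in some neighborhood $U$ of $\bar x$. The key step is that for $x\in\mathcal D_f\cap U$ the function $f$ is classically differentiable at $x$ and, being locally Lipschitz by part (ii), its classical gradient at such a point is a subgradient, $\nabla f(x)\in\partial f(x)$ (a standard fact; e.g.\ the regular subgradient at a point of differentiability equals the classical gradient, and the regular subdifferential is contained in $\partial f$). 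Substituting $\nabla f(x)\in\partial f(x)$ into the displayed inclusion yields $\|\nabla f(x)-\nabla f(\bar x)\|\le\|A\|\,\|x-\bar x\|+o(\|x-\bar x\|)$, so that choosing $\delta>0$ with $B(\bar x,\delta)\subseteq U$ and $(\|A\|+1)\delta<\eps$ (with the $o$-term absorbed for small radius) gives exactly the claim.

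I would also record a more hands-on route to (iii) that does not invoke the extended-sense Hessian: first show that $\nabla f(\bar x)=\nabla_x F(\bar x,\theta)$ for \emph{every} $\theta\in\theta_\star(\bar x)$ whenever $\bar x\in\mathcal D_f$, by comparing $f(\bar x+td)\ge F(\bar x+td,\theta)=F(\bar x,\theta)+t\langle\nabla_x F(\bar x,\theta),d\rangle+o(t)$ with $f(\bar x)=F(\bar x,\theta)$, dividing by $t>0$, letting $t\downarrow 0$, and applying the resulting inequality to $\pm d$. Then argue by contradiction: a sequence $x_k\to\bar x$ in $\mathcal D_f$ with $\|\nabla f(x_k)-\nabla f(\bar x)\|\ge\eps_0$ produces maximizers $\theta_k\in\theta_\star(x_k)$ with $\nabla f(x_k)=\nabla_x F(x_k,\theta_k)$; by compactness of $\Theta$ a subsequence of $\theta_k$ converges to some $\theta^\ast\in\Theta$, and continuity of $f$ and $F$ together with $F(x_k,\theta_k)=f(x_k)$ force $\theta^\ast\in\theta_\star(\bar x)$, so joint continuity of $\nabla_x F$ on $\mathcal D\times\Theta$ (Assumption~\ref{assumptions:for:F}(i)) gives $\nabla f(x_k)=\nabla_x F(x_k,\theta_k)\to\nabla_x F(\bar x,\theta^\ast)=\nabla f(\bar x)$, a contradiction.

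I expect part (iii) to be the only genuine obstacle. The delicate point is that the set on which $f$ is classically differentiable need not be open, so one cannot simply cite continuity of $\nabla f$ on an open domain; what rescues the argument is that Theorem~\ref{thm:second:order:differentiability} supplies an $o(\|x-\bar x\|)$ envelope for $\partial f(x)$ valid on a \emph{full} neighborhood of $\bar x$ (equivalently, in the alternative route, the identification of all maximizing gradients at $\bar x\in\mathcal D_f$), so restricting the estimate to $x\in\mathcal D_f$ costs nothing. Parts (i) and (ii) amount to bookkeeping against the definitions and assumptions already recalled.
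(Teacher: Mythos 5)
Your proposal is correct, and its primary line of argument coincides with the paper's: parts (i) and (ii) are read off from the definition of $\mathcal D_f$ and the uniform-in-$\theta$ Lipschitz bound (the paper cites \cite[Proposition 9.10]{RTR-RJBW:98} where you reprove the two-sided max inequality directly, which is the same content), and for (iii) the paper likewise extracts the first-order expansion $\nabla f(x)=\nabla f(\bar x)+\nabla^2 f(\bar x)(x-\bar x)+o(\|x-\bar x\|)$ valid on the domain of existence of $\nabla f$ from Definition~\ref{dfn:extended:second:order:differentiability}; your detour through Theorem~\ref{thm:second:order:differentiability} and the inclusion $\nabla f(x)\in\partial f(x)$ at points of differentiability is an equivalent, slightly longer path to the same estimate. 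The one genuinely different element is your alternative proof of (iii): identifying $\nabla f(\bar x)$ with $\nabla_x F(\bar x,\theta)$ for every maximizer $\theta\in\theta_\star(\bar x)$ (which is also what Propositions~\ref{prop:Clarkegradiens:sv:subgradient} and~\ref{prop:lowerC1:subgradient} give, since $\partial f(\bar x)$ is a singleton on $\mathcal D_f$) and then running a compactness/contradiction argument over $\Theta$ using joint continuity of $\nabla_x F$. That route is more elementary in that it uses only first-order data and never touches the extended Hessian, so it would survive under a lower-$\mathcal C^1$-type hypothesis; what it gives up is the quantitative $O(\|x-\bar x\|)$ modulus that the extended second-order expansion provides for free. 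Both arguments are sound, and the key observation you flag --- that the estimate holds on a full neighborhood and is merely restricted to $\mathcal D_f$, so non-openness of $\mathcal D_f$ is harmless --- is exactly the point the paper relies on.
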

\begin{proof}
  Item (i) follows directly from
  Theorem~\ref{thm:second:order:differentiability}.

  \noindent For the proof of (ii), we use the fact that for each
  bounded subset $S$ of $\Rat{n}$, the functions
  $x\mapsto F(x,\theta)$ are Lipschitz with respect to $x$ with
  constant $L_F^x(S)$, uniformly with respect to
  $\theta\in\Theta$. Thus, it follows from~\cite[Proposition 9.10,
  Page 356]{RTR-RJBW:98} that $f$ is also Lipschitz on $S$ with the
  same constant $L_F^x(S)$.

  \noindent The proof of (iii) is a direct consequence of
  Definition~\ref{dfn:extended:second:order:differentiability} about
  extended second-order differentiability and the fact that $\nabla f$
  is well defined on $\mathcal D_f$ by part (i).
\end{proof}

We also leverage the following result on approximate least-norm
elements of compact convex sets.

\begin{lemma}\longthmtitle{Approximate least-norm
    elements~\cite[Lemma 3.1]{KCK:07}}\label{lemma:beta:separation}
  Let $\emptyset\ne C\subset\Rat{n}$ be compact and convex, and
  $\beta\in(0,1)$. Then, there exists $\eta>0$ such that, for each
  $u\in\Rat{n}$ with $\dist(u,C)\le\eta$ and
  $\|u\|\le \dist(0,C)+\eta$, it holds that
  $\langle v,u\rangle>\beta\|u\|^2$ for all $v\in C$. \hfill$\blacksquare$
\end{lemma}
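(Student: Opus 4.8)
The plan is to reduce the statement to the elementary stability, under convexity, of the least-norm element of $C$. Set $r:=\dist(0,C)$ and let $g:=\proj_C(0)$ denote the unique least-norm element of $C$, so $\|g\|=r$. First I would observe that the conclusion can hold only when $0\notin C$, i.e.\ $r>0$: if $0\in C$ then $u=0$ meets both hypotheses for every $\eta>0$ yet fails $\langle v,u\rangle>\beta\|u\|^2$ already for $v=0$. We therefore work under $r>0$, which is precisely the regime in which the lemma is applied within the mGS algorithm (where $0\notin G_k$). The single structural fact I would invoke is the variational inequality for the projection: for every $v\in C$,
\begin{align*}
  \langle v-g,\,0-g\rangle\le 0,\qquad\text{equivalently}\qquad \langle v,g\rangle\ge\|g\|^2=r^2 .
\end{align*}

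Next I would show that any $u$ satisfying the two hypotheses lies close to $g$ once $\eta$ is small. Let $p:=\proj_C(u)$, so that $\|u-p\|=\dist(u,C)\le\eta$ and hence $\|p\|\le\|u\|+\eta\le r+2\eta$. Taking $v=p$ in the projection inequality gives $\langle g,p-g\rangle\ge 0$, so
\begin{align*}
  \|p-g\|^2 = \|p\|^2-\|g\|^2-2\langle g,p-g\rangle \le \|p\|^2-r^2 \le (r+2\eta)^2-r^2 = 4\eta r+4\eta^2 .
\end{align*}
Hence $\|u-g\|\le\|u-p\|+\|p-g\|\le\eta+2\sqrt{\eta r+\eta^2}=:\phi(\eta)$, and $\phi(\eta)\to 0$ as $\eta\to 0^+$.

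To conclude, write $M:=\max_{v\in C}\|v\|<\infty$ (finite by compactness of $C$). For every $v\in C$ and every $u$ satisfying the hypotheses,
\begin{align*}
  \langle v,u\rangle = \langle v,g\rangle+\langle v,u-g\rangle \ge r^2-M\phi(\eta), \qquad \beta\|u\|^2\le\beta(r+\eta)^2 .
\end{align*}
Since $r^2-M\phi(\eta)\to r^2$ while $\beta(r+\eta)^2\to\beta r^2<r^2$ as $\eta\to 0^+$, I would fix any $\eta>0$ small enough that $r^2-M\phi(\eta)>\beta(r+\eta)^2$; for that $\eta$, the bound $\langle v,u\rangle>\beta\|u\|^2$ holds for all $v\in C$ and all admissible $u$, which is the assertion.

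The only genuine obstacle is the stability estimate $\|u-g\|\le\phi(\eta)$; its content is that a point of $C$ whose norm is close to $r$ must be close to $g$, which is where convexity of $C$ and uniqueness of $g$ (strict convexity of the norm) enter---everything else is a limiting argument as $\eta\to 0^+$. A shorter, purely qualitative alternative is a proof by contradiction: otherwise there are $u_k$ with $\dist(u_k,C)\le 1/k$ and $\|u_k\|\le r+1/k$, and $v_k\in C$ with $\langle v_k,u_k\rangle\le\beta\|u_k\|^2$; boundedness of $\{u_k\}$ and compactness of $C$ yield subsequential limits $u_k\to u^\star$ and $v_k\to v^\star\in C$, and then $\dist(u^\star,C)=0$ forces $u^\star\in C$ with $\|u^\star\|=r$, hence $u^\star=g$, giving $r^2\le\langle v^\star,g\rangle\le\beta r^2$, which is impossible for $\beta\in(0,1)$ and $r>0$.
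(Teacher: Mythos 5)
Your proof is correct. Note first that the paper itself gives no proof of this lemma: it is imported verbatim from Kiwiel's convergence analysis (the citation \cite[Lemma 3.1]{KCK:07}) and closed with a $\blacksquare$, so there is no in-paper argument to compare against. Your two routes are both sound. The quantitative one is clean: the projection variational inequality $\langle v,g\rangle\ge r^2$ for $g:=\proj_C(0)$, $r:=\dist(0,C)$, combined with the stability estimate $\|u-g\|\le\eta+2\sqrt{\eta r+\eta^2}$ (whose algebra I checked: the identity $\|p-g\|^2=\|p\|^2-\|g\|^2-2\langle g,p-g\rangle$ and the sign $\langle g,p-g\rangle\ge 0$ are right), gives $\langle v,u\rangle\ge r^2-M\phi(\eta)$ versus $\beta\|u\|^2\le\beta(r+\eta)^2$, and the strict gap $r^2>\beta r^2$ closes the argument for small $\eta$. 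The compactness alternative is equally valid and closer in spirit to how such lemmas are usually proved in this literature.

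Your observation about the case $0\in C$ is a genuine and worthwhile catch: as literally stated the lemma fails there, since $u=0$ satisfies both hypotheses for every $\eta>0$ while the strict inequality $\langle v,0\rangle>\beta\|0\|^2$ is $0>0$. The hypothesis $0\notin C$ (equivalently $r>0$) is implicit; consistently, the paper only ever invokes the lemma with $C\equiv G_\eps(x_\star)$ under the explicit assumption $0\notin G_\eps(x_\star)$ (Proposition~\ref{prop:U:properties:and:tk:bound}(ii) and Corollary~\ref{corollary:covering}(ib)), so your restriction to $r>0$ matches the only regime in which the result is used.
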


Next we introduce some additional notation and then present the main
technical lemmas that will be used for the convergence proof of the mGS
algorithm. The proof hinges on establishing the existence of limit 
points for the iterates of the algorithm that are Clarke stationary for~$f$. 
In particular, given an arbitrary point $x_\star$, 
its proximity to $\eps$-stationarity is given by the least-norm element of
$G_\eps(x_\star)$,
\begin{align}\label{distance:rho:eps}
  \rho_\eps(x_\star):=\dist(0,G_\eps(x_\star)),
\end{align}
where $G_\eps$ is defined in~\eqref{G:epsilon}. 
The key idea behind the (m)GS algorithm
%
%
is to approximate 
this least-norm element using sampled gradients of nearby points.
To this end, we seek to characterize the set of $m$-tuples that can be 
used to approximate $\rho_\eps(x_\star)$ with a prescribed accuracy $\eta$,
leveraging only gradients of $F$.
%
We therefore define
\begin{align}\label{set:D:eps}
  D_\eps^m(x):=(\interior{B(x,\eps)}\cap\mathcal D_f)^m
  \subset \Rat{mn},
\end{align}
%
and consider the set
\begin{align}
  V_\eps(x_\star,x,\eta):= \{(y^1,\ldots,y^m)\in D_\eps^m(x)\,|\,  
  \dist(0,\conv(\{\nabla_x F
  &(y^i,\theta^i)\}_{i=1}^m)) \le 
    \;\rho_\eps(x_\star) +\eta,  \label{set:V:dfn} 
  \\
  & \textup{for all}\;\theta^i \in 
    B(\theta_\star(y^i),\psi(y^i,\eta))\}, \nonumber
\end{align}
where 
\begin{align} \label{psi:dfn}
  \psi(y,\eta):=\frac{\eta}{3L_{\nabla_xF}^\theta(y)},
  \;\textup{for any}\; y\in\mathcal D\;{\rm and}\;\eta>0. 
\end{align}
The following result characterizes the properties of~$V_\eps$. 

%
%
\begin{prop}\longthmtitle{Properties of
    $V_\eps$}\label{prop:U:properties:and:tk:bound}
  Let $\eps>0$ and $x_\star\in\Rat{n}$. Then:
  
  \noindent (i) For every $\eta>0$, there exist $\tau>0$ and a nonempty
  full-measure set $U$ satisfying $U\subset V_\eps(x_\star,x,\eta)$
  for all $x\in B(x_\star,\tau)$;
  
  \noindent (ii) Assume further that $0\notin G_\eps(x_\star)$ and
  pick $\eta>0$ such that the result of
  Lemma~\ref{lemma:beta:separation} holds with
  $C\equiv G_\eps(x_\star)$. Based on this selection, pick also
  $\tau>0$ and $U$ as in part (i). Then if at iteration $k$ of the mGS algorithm 
  Step~4 is reached
  with
  \begin{align*}
    x^k\in B(x_\star,\min\{\tau,\eps/3\}),\quad \eps_k
    \le\eps,\quad\delta_k\le\eta/3, \quad 
    {\rm and}\quad (x^{k1},\ldots,x^{km}) \in U, 
  \end{align*}
  where $x^{k1},\ldots,x^{km}$ are determined in Step 1, necessarily
  $ t_k\ge\gamma\eps_k/3$.
\end{prop}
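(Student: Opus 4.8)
The plan is to prove (i) by building $U$ explicitly around a near-optimal Carath\'eodory representative of the least-norm element of $G_\eps(x_\star)$, and then in (ii) to push the resulting control on $G_k$ (hence on $\|g^k\|$ and on $\dist(g^k,G_\eps(x_\star))$) through the line search. For (i), fix $\eta>0$. Since $\rho_\eps(x_\star)=\dist(0,G_\eps(x_\star))$ with $G_\eps(x_\star)=\cl(\conv(\nabla f(B(x_\star,\eps)\cap\mathcal D_f)))$, I would pick $g\in\conv(\nabla f(B(x_\star,\eps)\cap\mathcal D_f))$ with $\|g\|\le\rho_\eps(x_\star)+\eta/6$ and, by Carath\'eodory, write $g=\sum_{i=1}^{n+1}\lambda_i\nabla f(\hat y^i)$ with $\lambda_i\ge 0$, $\sum_i\lambda_i=1$, $\hat y^i\in B(x_\star,\eps)\cap\mathcal D_f$. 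Any $\hat y^i$ lying on $\bdry(B(x_\star,\eps))$ can be nudged inward to some $\tilde y^i\in\interior{B(x_\star,\eps)}\cap\mathcal D_f$ with $\|\nabla f(\tilde y^i)-\nabla f(\hat y^i)\|$ as small as desired --- this uses the density of $\mathcal D_f$ together with Proposition~\ref{prop:lower:C2:implications}(iii), i.e.\ the continuity of $\nabla f$ relative to $\mathcal D_f$ at points of $\mathcal D_f$ --- while the interior $\hat y^i$ are left unchanged. This produces $\tilde y^1,\dots,\tilde y^{n+1}\in\interior{B(x_\star,\eps)}$ with $\|\sum_i\lambda_i\nabla f(\tilde y^i)\|\le\rho_\eps(x_\star)+\eta/3$; being finitely many interior points, there is $\tau\in(0,\eps)$ with all $\tilde y^i\in\interior{B(x_\star,\eps-\tau)}$. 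Setting $\tilde y^{n+2}=\dots=\tilde y^m:=\tilde y^1$ and $\lambda_i:=0$ for $i>n+1$, I would apply Proposition~\ref{prop:lower:C2:implications}(iii) again at each $\tilde y^i$ to get $r_i>0$ with $B(\tilde y^i,r_i)\subset\interior{B(x_\star,\eps-\tau)}$ and $\|\nabla f(y)-\nabla f(\tilde y^i)\|<\eta/3$ for $y\in B(\tilde y^i,r_i)\cap\mathcal D_f$, and define $U:=\prod_{i=1}^m(B(\tilde y^i,r_i)\cap\mathcal D_f)$, which is nonempty and of full measure inside the product box $\prod_{i=1}^m B(\tilde y^i,r_i)$.

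To check $U\subset V_\eps(x_\star,x,\eta)$ for all $x\in B(x_\star,\tau)$: if $(y^1,\dots,y^m)\in U$ and $\|x-x_\star\|\le\tau$, then $\|y^i-x\|<(\eps-\tau)+\tau=\eps$, so the tuple lies in $D_\eps^m(x)$. For the distance bound, fix any admissible $\theta^i\in B(\theta_\star(y^i),\psi(y^i,\eta))$; since $y^i\in\mathcal D_f$, Theorem~\ref{thm:second:order:differentiability} makes $\partial f(y^i)$ a singleton, so Proposition~\ref{prop:lowerC1:subgradient} forces $\nabla_xF(y^i,\theta)=\nabla f(y^i)$ for every $\theta\in\theta_\star(y^i)$, and Assumption~\ref{assumptions:for:F}(iii) with the radius $\psi(y^i,\eta)=\eta/(3L_{\nabla_xF}^\theta(y^i))$ gives $\|\nabla_xF(y^i,\theta^i)-\nabla f(y^i)\|\le\eta/3$. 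Chaining this with $\|\nabla f(y^i)-\nabla f(\tilde y^i)\|<\eta/3$ and $\|\sum_i\lambda_i\nabla f(\tilde y^i)\|\le\rho_\eps(x_\star)+\eta/3$ yields $\dist(0,\conv(\{\nabla_xF(y^i,\theta^i)\}_{i=1}^m))\le\|\sum_i\lambda_i\nabla_xF(y^i,\theta^i)\|\le\rho_\eps(x_\star)+\eta$, which is exactly the membership condition.

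For (ii), suppose iteration $k$ reaches Step~4 under the stated hypotheses. Since $x^k\in B(x_\star,\tau)$, part (i) gives $(x^{k1},\dots,x^{km})\in U\subset V_\eps(x_\star,x^k,\eta)$; in particular all $x^{ki}\in\interior{B(x_\star,\eps)}\cap\mathcal D_f$, so the algorithm does not stop and $\nabla f(x^{ki})\in G_\eps(x_\star)$. As $\delta_k\le\eta/3$, the Step~1 multipliers satisfy $\theta_\star^{ki}\in B(\theta_\star(x^{ki}),\psi(x^{ki},\eta))$, so the $V_\eps$ inequality with $\theta^i=\theta_\star^{ki}$ gives $\|g^k\|=\dist(0,G_k)\le\rho_\eps(x_\star)+\eta$; moreover $\|\nabla_xF(x^{ki},\theta_\star^{ki})-\nabla f(x^{ki})\|\le\delta_k\le\eta/3$, so each vertex of $G_k$ is within $\eta/3$ of a point of $G_\eps(x_\star)$ and hence $\dist(g^k,G_\eps(x_\star))\le\eta/3$ by convexity of the distance function. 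Lemma~\ref{lemma:beta:separation} applied with the (nonempty, compact, convex) set $C=G_\eps(x_\star)$, for which $\eta$ was chosen, then gives $\langle v,g^k\rangle>\beta\|g^k\|^2$ for all $v\in G_\eps(x_\star)$. Now for $t\in(0,\eps_k/3]$ the segment $[x^k,x^k+td^k]$ lies in $B(x_\star,2\eps/3)\subset\interior{B(x_\star,\eps)}$ (because $\|x^k-x_\star\|\le\eps/3$ and $t\le\eps_k/3\le\eps/3$), so $\bar\partial f(z)\subset G_\eps(x_\star)$ there; with $d^k=-g^k/\|g^k\|$, the mean value theorem for locally Lipschitz functions (valid by Proposition~\ref{prop:lower:C2:implications}(ii)) yields $f(x^k+td^k)-f(x^k)=t\langle v,d^k\rangle\le-\beta t\|g^k\|$ for some $v\in\bar\partial f(z)\subset G_\eps(x_\star)$. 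Combining with $F(x^k+td^k,(\theta_\star^k)')\le f(x^k+td^k)$ and $F(x^k,\theta_\star^k)\ge f(x^k)-c_k/4$ (from \eqref{theta:star:bounds} and Assumption~\ref{assumptions:for:F}(ii)) gives $F(x^k+td^k,(\theta_\star^k)')-F(x^k,\theta_\star^k)\le-\beta t\|g^k\|+c_k/4<-\beta t\|g^k\|+c_k/2$, so the acceptance test in Step~4(iii) holds for every $t\in(0,\eps_k/3]$ and every oracle choice. Finally, Step~4(iv) is reached only after Step~4(iii) fails, which by the above forces the current $t>\eps_k/3$, hence $\gamma t>\gamma\eps_k/3$ and the loop never outputs $t_k=0$; the first accepted $t$ is therefore either the initial step size (which is $\ge\gamma\eps_k/3$ by Step~4(i)) or $\gamma$ times a rejected trial value exceeding $\eps_k/3$, so $t_k\ge\gamma\eps_k/3$.

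The step I expect to be the main obstacle is the construction of $U$ in (i): the sampled points must be kept strictly interior to $B(x_\star,\eps)$ so that they remain admissible uniformly over all $x\in B(x_\star,\tau)$, they must stay inside $\mathcal D_f$ where only the \emph{relative} continuity of $\nabla f$ from Proposition~\ref{prop:lower:C2:implications}(iii) --- not classical continuity of $\nabla f$ on an open set of full measure --- is available, and the approximation bound must hold stably over a whole open box of tuples rather than at a single point. This is precisely where the weakened lower-$\mathcal C^2$ hypothesis has to be accommodated, and it is the principal deviation from the argument of \cite{KCK:07}.
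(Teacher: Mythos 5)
Your proof is correct and follows essentially the same route as the paper's: a Carath\'eodory representation of a near-least-norm element of $G_\eps(x_\star)$ by gradients at finitely many interior points of $B(x_\star,\eps)\cap\mathcal D_f$, a positive-measure product-of-balls set $U$ built from the continuity of $\nabla f$ \emph{relative to} $\mathcal D_f$ (Proposition~\ref{prop:lower:C2:implications}(iii)) together with the oracle tolerance $\psi(\cdot,\eta)$, and then Lemma~\ref{lemma:beta:separation} plus Lebourg's mean value theorem on $B(x_\star,2\eps/3)$ to show the Step~4(iii) test is passed for every trial step $t\le\eps_k/3$, which forces $t_k\ge\gamma\eps_k/3$. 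The only substantive difference is localized: you obtain interior Carath\'eodory points by nudging boundary points of $B(x_\star,\eps)$ inward (using density of $\mathcal D_f$ and relative continuity of $\nabla f$), whereas the paper proves once and for all that $G_\eps(x_\star)=\cl(\conv(\nabla f(\interior{B(x_\star,\eps)}\cap\mathcal D_f)))$ (Lemma~\ref{lemma:Clarke:gradient:properties}(iv)) and selects the points in the interior from the start; both are valid.
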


Proposition~\ref{prop:U:properties:and:tk:bound}(i) provides
properties of the set of points close to a point $x_\star$ that can be
used to approximate the least-norm element of
$G_\eps(x_\star)$. Proposition~\ref{prop:U:properties:and:tk:bound}(ii)
is concerned with the implications of the properties of this set on
the algorithm stepsize. It implies that if the iterates of the
algorithm are close to a point $x_\star$ for which the
$\eps$-approximate gradient proxy $G_\eps(x_\star)$ is bounded away
from zero, then there is a non-negligible set to sample gradients that
enforces nontrivial lower bounds on the increments of the algorithm.
The fact that the set $U$ in Proposition~\ref{prop:U:properties:and:tk:bound} is no longer required to be open, as is the case, for instance, in~\cite[Lemma 3.2]{KCK:07}, enables us to establish convergence of the mGS algorithm without relying on the existence of an open full-measure set on which $f$ is continuously differentiable. 

Both results are used in the proof of the main theorem to establish
that certain events have probability zero. These events are related to
limit points of the algorithm iterates (cf. limit point $\bar x$ in
\cite[Proof of Theorem 3.3(ii)]{KCK:07}), while analogous
considerations are also used in \cite[Proof of Theorem
4.4]{JVB-ASL-MLO:05} where the events are captured through a function
of the algorithm iterates (cf. constant
$\eta=\inf_{k\in\mathbb N}\rho_\epsilon(x^k)$). Here we follow the
approach from \cite{KCK:07}. Still, the desirable events of
probability zero are parameterized by the corresponding limit points
of the algorithm iterates, and it is not necessarily obvious why these
limit points will never form an uncountable set. To clarify that this
poses no issues, we modify the proof strategy from \cite{KCK:07} by
selecting a countable set of regions that contain all candidate limit
points. Thus, we obtain a countable parameterization of the desired
events through these regions, which are constructed in the following
result.

\begin{corollary} \label{corollary:covering} \longthmtitle{Countable
    cover with controlled approximate subdifferential \linebreak properties}
  Fix $\eps_1,\nu_1>0$ and $\mu,\vartheta\in(0,1)$, and let
  $\beta\in(0,1)$ be the same as in the mGS algorithm. For each
  $\ell\in\mathbb N$, define $\eps(\ell):=\eps_1\mu^{\ell-1}$ and
  $\eta_a(\ell):=\nu_1\vartheta^{\ell-1}/2$.  For any $x\in\Rat{n}$
  with $0\notin G_{\eps(\ell)}(x)$, let also $\eta_b(\ell,x)$ be
  determined by Lemma~\ref{lemma:beta:separation} with
  $C\equiv G_{\eps(\ell)}(x)$ and $\beta$ as in the statement. Then
  for each $\ell\in\mathbb N$, there is a countable subset
  $\{x_{\ell,\iota}\}_{\iota\in\I_\ell}$ of $\Rat{n}$ such that:

\noindent (i) For each $\iota\in\I_\ell$: 

\noindent (ia) If $0\in G_{\eps(\ell)}(x_{\ell,\iota})$, then there
exist
$\tau(\ell,\iota)\equiv\tau(\eps(\ell),\eta_a(\ell),x_{\ell,\iota})>0$
and a full measure subset
$U(\ell,\iota)\equiv U(\eps(\ell),\eta_a(\ell),x_{\ell,\iota})$ of
$\Rat{mn}$, such that the conclusion of
Proposition~\ref{prop:U:properties:and:tk:bound}(i) is met with
$\eps\equiv\eps(\ell)$, $\eta\equiv \eta_a(\ell)$,
$x_\star\equiv x_{\ell,\iota}$, $\tau\equiv\tau(\ell,\iota)$, and
$U\equiv U(\ell,\iota)$;

\noindent (ib) If $0\notin G_{\eps(\ell)}(x_{\ell,\iota})$, then the
conclusion of Proposition~\ref{prop:U:properties:and:tk:bound}(ii) is
met with $\eps\equiv\eps(\ell)$,
$\eta\equiv\eta_b(\ell,x_{\ell,\iota})$,
$x_\star\equiv x_{\ell,\iota}$, and with
$\tau\equiv\tau(\ell,\iota)\equiv\tau(\eps(\ell),\eta_b(\ell,x_{\ell,\iota}),x_{\ell,\iota})$,
$U\equiv U(\eps(\ell),\eta_b(\ell,x_{\ell,\iota}),x_{\ell,\iota})$ as
determined by Proposition~\ref{prop:U:properties:and:tk:bound}(i);

\noindent (ii) The set
$\{B(x_{\ell,\iota},\widehat \tau(\ell,\iota))\}_{\iota\in\I_\ell}$ is
a cover of $\Rat{n}$, where each
$\widehat \tau(\ell,\iota):=\min\{\tau(\ell,\iota),\eps(\ell)/3\}/2
>0$ and $\tau(\ell,\iota)$ is given by (ia) or (ib) above, depending on
whether $0\in G_{\eps(\ell)}(x_{\ell,\iota})$ or not, respectively.
\end{corollary}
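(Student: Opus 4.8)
The plan is to build the countable set $\{x_{\ell,\iota}\}_{\iota\in\I_\ell}$ by a Lindelöf/second-countability argument applied to an appropriately chosen open cover of $\Rat{n}$, and then read off properties (ia), (ib), (ii) from Proposition~\ref{prop:U:properties:and:tk:bound}(i). Fix $\ell\in\mathbb N$, write $\eps=\eps(\ell)$ and $\eta_a=\eta_a(\ell)$ for brevity. For each candidate center $x\in\Rat{n}$ I will associate a radius and a neighborhood as follows. If $0\in G_{\eps}(x)$, apply Proposition~\ref{prop:U:properties:and:tk:bound}(i) with $x_\star\equiv x$, $\eps\equiv\eps$, $\eta\equiv\eta_a$ to get $\tau(x)>0$ and a full-measure $U(x)\subset\Rat{mn}$ with $U(x)\subset V_\eps(x,x',\eta_a)$ for all $x'\in B(x,\tau(x))$. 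If instead $0\notin G_{\eps}(x)$, let $\eta_b(x):=\eta_b(\ell,x)$ be the constant from Lemma~\ref{lemma:beta:separation} with $C\equiv G_{\eps}(x)$ and $\beta$ as in the algorithm, and then apply Proposition~\ref{prop:U:properties:and:tk:bound}(i) again, this time with $\eta\equiv\eta_b(x)$, to obtain $\tau(x)>0$ and a full-measure $U(x)\subset\Rat{mn}$; this is exactly the hypothesis package needed to invoke Proposition~\ref{prop:U:properties:and:tk:bound}(ii) for any iterate lying in $B(x,\min\{\tau(x),\eps/3\})$ with $\eps_k\le\eps$ and $\delta_k\le\eta_b(x)/3$. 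In both cases set $\widehat\tau(x):=\min\{\tau(x),\eps/3\}/2>0$.

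Now the family $\{\,\interior{B(x,\widehat\tau(x))}\,\}_{x\in\Rat{n}}$ is an open cover of $\Rat{n}$ (each $x$ lies in its own ball since $\widehat\tau(x)>0$). Because $\Rat{n}$ is second countable — equivalently, Lindelöf — this open cover admits a countable subcover; let $\{x_{\ell,\iota}\}_{\iota\in\I_\ell}$ be the (countable) set of centers of that subcover, and set $\tau(\ell,\iota):=\tau(x_{\ell,\iota})$, $U(\ell,\iota):=U(x_{\ell,\iota})$, $\widehat\tau(\ell,\iota):=\widehat\tau(x_{\ell,\iota})$, and (when $0\notin G_\eps(x_{\ell,\iota})$) $\eta_b(\ell,x_{\ell,\iota}):=\eta_b(x_{\ell,\iota})$. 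Property (ii) — that $\{B(x_{\ell,\iota},\widehat\tau(\ell,\iota))\}_{\iota\in\I_\ell}$ covers $\Rat{n}$ — is immediate, since the open balls $\interior{B(x_{\ell,\iota},\widehat\tau(\ell,\iota))}$ already cover $\Rat{n}$ by construction of the subcover, and these are contained in the closed balls. Properties (ia) and (ib) hold for each $\iota\in\I_\ell$ by the very definition of $\tau(x_{\ell,\iota})$ and $U(x_{\ell,\iota})$ above: in the case $0\in G_\eps(x_{\ell,\iota})$ this is Proposition~\ref{prop:U:properties:and:tk:bound}(i) verbatim; in the case $0\notin G_\eps(x_{\ell,\iota})$ the pair $(\tau(x_{\ell,\iota}),U(x_{\ell,\iota}))$ was produced precisely by Proposition~\ref{prop:U:properties:and:tk:bound}(i) applied with $\eta=\eta_b(\ell,x_{\ell,\iota})$, which is exactly what Proposition~\ref{prop:U:properties:and:tk:bound}(ii) requires of its inputs, so its conclusion applies with the stated substitutions.

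The only genuinely delicate point is making sure the radius $\tau(x)$ associated to each center is \emph{strictly positive} and depends only on $x$ (and the fixed data $\eps,\eta_a,\beta$), so that the balls $\interior{B(x,\widehat\tau(x))}$ are a bona fide open cover to which the Lindelöf property applies; this is guaranteed by Proposition~\ref{prop:U:properties:and:tk:bound}(i), which furnishes $\tau>0$ for \emph{every} $\eta>0$, including $\eta=\eta_a$ and $\eta=\eta_b(x)>0$. One should also note that $\eta_b(\ell,x)>0$ whenever $0\notin G_\eps(x)$: this is exactly the output of Lemma~\ref{lemma:beta:separation} applied to the nonempty compact convex set $C=G_\eps(x)$. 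Everything else is bookkeeping: the countability of $\I_\ell$ comes for free from second countability of $\Rat{n}$, and no openness of the sets $U(\ell,\iota)$ is needed — which, as the surrounding discussion emphasizes, is the whole reason this cover can be built under the relaxed (merely lower-$\mathcal C^2$) hypotheses rather than requiring a full-measure open set of differentiability.
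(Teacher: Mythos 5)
Your proposal is correct and follows essentially the same route as the paper's own proof: apply Proposition~\ref{prop:U:properties:and:tk:bound}(i) pointwise with $\eta\equiv\eta_a(\ell)$ or $\eta\equiv\eta_b(\ell,x)$ according to whether $0\in G_{\eps(\ell)}(x)$, form the open cover by the balls of radius $\widehat\tau(x)>0$, and extract a countable subcover via the Lindel\"of property of $\Rat{n}$. The paper's argument is the same in all essentials, including the final passage from open to closed balls.
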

 
While the proof is a rather direct consequence of
Proposition~\ref{prop:U:properties:and:tk:bound}, we provide it as it
helps clarify how the set $U$ and tolerance $\tau$ of the proposition
depend on the point $x_\star$ and other parameters.

\begin{proof}
  Fix $\ell\in\mathbb N$ and let any $x_\star\in\Rat{n}$. Assume first
  that $0\in G_{\eps(\ell)}(x_\star)$. Since
  Proposition~\ref{prop:U:properties:and:tk:bound}(i) is valid for any
  $x_\star\in\Rat{n}$ and $\eps>0$, for $\eta_a(\ell)>0$ as given in
  the statement of the corollary, we can pick
  $\tau(\ell,x_\star)\equiv\tau(\eps(\ell),\eta_a(\ell),x_\star)>0$
  and a full measure subset
  $U(\ell,x_\star)\equiv U(\eps(\ell),\eta_a(\ell),x_\star)$ so that
  the conclusion of
  Proposition~\ref{prop:U:properties:and:tk:bound}(i) is met.  Next,
  assume that $0\notin G_{\eps(\ell)}(x_\star)$ and let
  $\eta_b(\ell,x_\star)$ as in the statement of the corollary. Then we
  can pick
  $\tau(\ell,x_\star)\equiv\tau(\eps(\ell),\eta_b(\ell,x_\star),x_\star)>0$
  and
  $U(\ell,x_\star)\equiv U(\eps(\ell),\eta_b(\ell,x_\star),x_\star)$
  so that the conclusion of
  Proposition~\ref{prop:U:properties:and:tk:bound}(ii) is met in this
  case. Based on the selection of $\tau(\ell,x_\star)$ for each case, we define $\widehat \tau(\ell,x_\star):=\min\{\tau(\ell,x_\star),\eps(\ell)/3\}/2$ for every  $x_\star\in\Rat{n}$,
  %
  which is always strictly positive.

  It follows that
  $\{\interior{B(x_\star,\widehat\tau(\ell,x_\star)}\}_{x_\star\in\Rat{n}}$
  is an open cover of $\Rat{n}$, from which we can extract a countable
  subcover, since $\Rat{n}$ is Lindel\"of.
  Taking the closure of the elements of this
  subcover establishes~(ii), while~(ia) and~(ib) follow directly from
  the above construction. In particular, we select the centers of the
  balls in the subcover to form
  $\{x_{\ell,\iota}\}_{\iota\in\I_\ell}$. This concludes the proof.
\end{proof}      
 
We also provide a result that is used to verify the convergence of the
sequence generated by the mGS algorithm. 

\begin{lemma}\longthmtitle{Convergence to a Clarke stationary
    point}\label{lemma:convergence}
  Assume that the parameters of the mGS algorithm satisfy
  $\delta_k,\eps_k\searrow 0$ and that there exists a subsequence
  $\{k_\ell\}$ with $x^{k_\ell}\to x_\star$ and $g^{k_\ell}\to
  0$. Then $0\in \bar\partial f(x_\star)$.
\end{lemma}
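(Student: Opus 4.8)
The plan is to show that $g^{k_\ell}\to 0$ forces $0\in\bar\partial f(x_\star)$ by relating each $g^{k_\ell}$ to the approximate gradient proxy $G_{\eps_{k_\ell}}(x^{k_\ell})$ and then letting $\eps_{k_\ell}\to 0$. First I would recall that, by construction in Step~1--2 of the mGS algorithm, $g^{k_\ell}$ is the least-norm element of $G_{k_\ell}=\conv(\{\nabla_x F(x^{k_\ell i},\theta_\star^{k_\ell i})\}_{i=1}^m)$, where each $x^{k_\ell i}\in B(x^{k_\ell},\eps_{k_\ell})$ and each $\theta_\star^{k_\ell i}$ is within $\delta_{k_\ell}/L_{\nabla_xF}^\theta(x^{k_\ell i})$ of a true maximizer in $\theta_\star(x^{k_\ell i})$. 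Using Assumption~\ref{assumptions:for:F}(iii), $\|\nabla_x F(x^{k_\ell i},\theta_\star^{k_\ell i})-\nabla_x F(x^{k_\ell i},\theta)\|\le\delta_{k_\ell}$ for some $\theta\in\theta_\star(x^{k_\ell i})$; and by Proposition~\ref{prop:lowerC1:subgradient} the gradient $\nabla_x F(x^{k_\ell i},\theta)$ for $\theta\in T(x^{k_\ell i})=\theta_\star(x^{k_\ell i})$ is an extreme generator of $\partial f(x^{k_\ell i})$, hence lies in $\bar\partial f(x^{k_\ell i})\subset\bar\partial_{\eps_{k_\ell}} f(x^{k_\ell})$. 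Therefore each generator of $G_{k_\ell}$ is within $\delta_{k_\ell}$ of $\bar\partial_{\eps_{k_\ell}} f(x^{k_\ell})$, and taking convex hulls and using that $G_\eps(x)\subset\bar\partial_\eps f(x)$ with $\bar\partial_{\eps_1}f(x)\subset G_{\eps_2}(x)$ for $\eps_1<\eps_2$, one gets $\dist(g^{k_\ell},\bar\partial_{\eps_{k_\ell}} f(x^{k_\ell}))\le\delta_{k_\ell}$.

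Next I would pass to the limit. Fix any $\eps>0$; since $\eps_{k_\ell}\searrow 0$ and $x^{k_\ell}\to x_\star$, for $\ell$ large enough $B(x^{k_\ell},\eps_{k_\ell})\subset B(x_\star,\eps)$, hence $\bar\partial_{\eps_{k_\ell}} f(x^{k_\ell})\subset\bar\partial_\eps f(x_\star)$. Combined with the previous bound and $\delta_{k_\ell}\to 0$, this yields $\dist(g^{k_\ell},\bar\partial_\eps f(x_\star))\to 0$. Since $\bar\partial_\eps f(x_\star)=\conv(\bar\partial f(B(x_\star,\eps)))$ is closed (indeed compact, because $f$ is locally Lipschitz by Proposition~\ref{prop:lower:C2:implications}(ii), so $\bar\partial f$ is locally bounded), and $g^{k_\ell}\to 0$ by hypothesis, we conclude $0\in\bar\partial_\eps f(x_\star)$. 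As this holds for every $\eps>0$, and $\bar\partial_\eps f(x_\star)\downarrow\bar\partial f(x_\star)$ as $\eps\downarrow 0$ by the outer semicontinuity of the Clarke generalized gradient of a locally Lipschitz function, we obtain $0\in\bar\partial f(x_\star)$.

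The main obstacle I anticipate is making the chain $\nabla_x F(x^{k_\ell i},\theta)\in\bar\partial f(x^{k_\ell i})$ fully rigorous: Proposition~\ref{prop:lowerC1:subgradient} gives $\partial f(x)=\conv(\{\nabla f_t(x)\mid t\in T(x)\})$ for the \emph{local} lower-$\mathcal C^1$ representation, and one must check that the global representation $f(x)=\max_{\theta\in\Theta}F(x,\theta)$ is compatible with this local representation around each $x^{k_\ell i}$ (it is, since $x^{k_\ell i}\in\mathcal D$ where $f$ is lower-$\mathcal C^2$ and the representation with index set $T=\Theta$ and $f_\theta=F(\cdot,\theta)$ is exactly of the required form), so that the extreme points $\nabla_x F(x^{k_\ell i},\theta)$, $\theta\in\theta_\star(x^{k_\ell i})$, do lie in $\partial f(x^{k_\ell i})\subset\bar\partial f(x^{k_\ell i})$. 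A secondary technical point is the monotone intersection property $\bigcap_{\eps>0}\bar\partial_\eps f(x_\star)=\bar\partial f(x_\star)$, which follows from the definition $\bar\partial_\eps f(x_\star)=\conv(\bar\partial f(B(x_\star,\eps)))$ together with upper semicontinuity of $\bar\partial f(\cdot)$ and local boundedness; I would cite this rather than reprove it. The rest is routine.
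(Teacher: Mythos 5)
Your proof is correct and follows essentially the same route as the paper's: the paper argues by contradiction, assuming $\dist(0,\bar\partial f(x_\star))=\rho>0$ and invoking Lemma~\ref{lemma:approximate:gradient:inclusion} (which packages exactly your chain of Assumption~\ref{assumptions:for:F}(iii), Proposition~\ref{prop:lowerC1:subgradient}, Proposition~\ref{prop:Clarkegradiens:sv:subgradient}, and the properties in Lemma~\ref{lemma:Clarke:gradient:properties}) to place every sampled gradient, hence $g^{k_\ell}$, in $B(\bar\partial f(x_\star),\rho/2)$ while $\|g^{k_\ell}\|<\rho/2$. Your direct formulation through the Goldstein subdifferential $\bar\partial_\eps$ and the intersection property $\cap_{\eps>0}\bar\partial_\eps f(x_\star)=\bar\partial f(x_\star)$ is an equivalent repackaging of the paper's $G_\eps$-based argument (Lemma~\ref{lemma:Clarke:gradient:properties}(i)--(ii)), so no substantive difference or gap remains.
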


We are now ready to provide the proof of
Theorem~\ref{thm:GS:convergence}.

\begin{proof}[Proof of Theorem~\ref{thm:GS:convergence}]  \textit{Step~1: Boundedness of the cumulative increments.}  From
  Step~4 of the mGS algorithm, we have 
  \begin{align*}
    F(x^{k+1},(\theta_\star^k)') \le  
    F(x^k,\theta_\star^k)-\beta t_k\|g^k\|+\frac{c_k}{2},
  \end{align*}
  for some $(\theta_\star^k)'$ and $\theta_\star^k$ satisfying
  \eqref{theta:star:bounds} whenever $t_k\ne 0$.
  Thus, since by
  Assumption~\ref{assumptions:for:F}(ii) $F$ is Lipschitz with respect
  to $\theta$ at $x^k$ and $x^{k+1}$ with constants $L_F^\theta(x^k)$ and $L_F^\theta(x^{k+1})$, respectively, we get
  \begin{align*}
    f(x^{k+1})
    & \le F(x^{k+1},(\theta_\star^k)') 
      +\|f(x^{k+1})-F(x^{k+1},(\theta_\star^k)') \|
    \\
    & \le F(x^k,\theta_\star^k)-\beta 
      t_k\|g^k\|+\frac{c_k}{2}+L_F^\theta(x^{k+1})
      \frac{c_k}{4L_F^\theta(x^{k+1})}  
    \\
    & \le f(x^k)+\|F(x^k,\theta_\star^k)-f(x^k)\|-\beta 
      t_k\|g^k\|+\frac{3c_k}{4} \le f(x^k)-\beta t_k\|g^k\|+c_k.
  \end{align*}
  Here, we have also exploited the fact that
  $f(x^k)=F(x^k,\theta_\star)$ and
  $f(x^{k+1})=F(x^{k+1},\theta_\star')$ for some
  $\theta_\star\in\theta_\star(x^k)$ and
  $\theta_\star'\in\theta_\star(x^{k+1})$, in the second and last
  inequality, respectively.  Taking further into account that either $t_k=0$ and $x^{k+1}=x^k$, or that 
  $t_k\ge\gamma\eps_k/3$ and using that
  $c_k=\gamma(1-\alpha)\beta\|g^k\|\eps_k/3$, we obtain
  \begin{align}\label{f:decrease:rate}
    f(x^{k+1}) \le
    f(x^k)-\alpha\beta t_k\|g^k\|\quad\textup{for all}\;k\in\mathbb N.
  \end{align}
  Thus, since $f$ is lower bounded, summing over $k$, we get  
  \begin{align*}
    -\infty<\inf_{k\in\mathbb N}f(x^k)=\lim_kf(x^k)\le f(x^1)
    -\sum_{k=1}^\infty\alpha\beta t_k\|g^k\|
  \end{align*}
  and using also the fact that $t_k=\|x^{k+1}-x^k\|$, we obtain
  \begin{subequations} \label{summable:sequences}
    \begin{align}
      \sum_{k=1}^\infty t_k\|g^k\| & <\infty
      \\ 
      \sum_{k=1}^\infty \|x^{k+1}-x^k\|\|g^k\| & <\infty. 
    \end{align}
  \end{subequations}

  \noindent \textit{Step~2: Proof of the fact that
    $\nu_k,\eps_k\searrow 0$.}  To show that $\nu_k,\eps_k\searrow 0$
  with probability one, consider the complementary event $\mathcal E$,
  representing the algorithm runs for which
  %
  %
  there exist an integer $k_1$ and $\nu\in\{\nu_k\}$, $\eps\in\{\eps_k\}$ such that $\nu_k=\nu >0$ and $\eps_k=\eps>0$ for all $k\ge
  k_1$. 
  Then we necessarily have that $\eps=\eps_1\mu^{\ell-1}$ and
  $\nu=\nu_1\vartheta^{\ell-1}$ for some $\ell\in\mathbb N$. We denote
  each such event by $\mathcal E_{\ell,k_1}$ and fix $\ell,k_1$ from
  now on. For each algorithm run in $\mathcal E_{\ell,k_1}$, it
  follows from Step~3 of the mGS algorithm that $\|g^k\|\ge \nu$ for
  all $k\ge k_1$ and, from \eqref{summable:sequences}, we obtain that
  $t_k\to 0$ and $\sum_{k=1}^\infty\|x^{k+1}-x^k\|<\infty$, which
  implies that there exists $\widehat x$ such that
  $x^k\to \widehat x$. 
  
  By invoking Corollary~\ref{corollary:covering} for the fixed integer
  $\ell$, we can select countable sets
  $\{x_{\ell,\iota}\}_{\iota\in\I_\ell}\subset\Rat{n}$ and
  $\{\widehat\tau(\ell,\iota)\}_{\iota\in\I_\ell}\subset\RgO$, such
  that
  $\{B(x_{\ell,\iota},\widehat\tau(\ell,\iota))\}_{\iota\in\I_\ell}$
  is a countable cover of $\Rat{n}$ and the conclusion of the
  corollary is met. Thus, we have that
  $\mathcal E_{\ell,k_1}=\cup_{\iota\in\I_\ell}\mathcal
  E_{\ell,k_1,\iota}$,
  %
  %
  where each event $\mathcal E_{\ell,k_1,\iota}$ is determined
  by the additional requirement that the corresponding limit
  points of the algorithm satisfy $\widehat x\in   B(x_{\ell,\iota},\widehat\tau(\ell,\iota))$.
  %
  %
  %
  We next fix such an event
  $\mathcal E_{\ell,k_1,\iota}$ 
  and distinguish two main cases. To this end, we recall that at
  each iteration~$k$, the algorithm samples $m$ points
  $x^{k1},\ldots,x^{km}$ and determines the parameter
  $\theta_\star^{ki}$ for each point by approximately solving the
  inner maximization problem.

  \noindent \textit{Case (a): $0\in G_\eps(x_{\ell,\iota})$.} We
  denote by $\mathcal E_{\ell,k_1,\iota}^a$ the events of this
  case, namely, for which $0\in G_\eps(x_{\ell,\iota})$. 
  By assumption, the minimum-norm element $g^k$ determined in Step 2 of 
  the algorithm satisfies
  $\|g^k\|\ge\nu$, 
  which by \eqref{Gk:dfn} 
  implies that
  \begin{align}\label{convex:hull:above:nu}
    \dist(0,\conv(\{\nabla_xF(x^{ki},\theta_\star^{ki})\}_{i=1}^m))\ge\nu,
  \end{align} 
  %
  for all $k\ge k_1$, where each $\theta_\star^{ki}$ is given in
  \eqref{Gk:dfn:second:part}. We next pick
  $\eta_a(\ell) = \nu_1\vartheta^{\ell-1}/2$, according to
  Corollary~\ref{corollary:covering}. Since
  $\nu=\nu_1\vartheta^{\ell-1}$ we get
  \begin{align} \label{eta:a:value}
  \eta_a(\ell)=\nu/2.
  \end{align}
  Then it follows from Corollary~\ref{corollary:covering}(ia) and
  Proposition~\ref{prop:U:properties:and:tk:bound}(i) that there
  exists a nonzero measure subset
  $U(\ell,\iota)\equiv U(\eps,\eta_a(\ell),x_{\ell,\iota})$ of
  $\Rat{mn}$ such that
    \begin{align} \label{U:containment:for:main:thm}
    U(\ell,\iota)\subset V_\eps(x_{\ell,\iota},x,\eta_a(\ell))
    \;\textup{for all}\; x\in B(x_{\ell,\iota},2\widehat \tau(\ell,\iota)),
    \end{align}
    where $\widehat \tau(\ell,\iota)$ is determined by Corollary~\ref{corollary:covering}(ii).
    Since for each event in $\mathcal E_{\ell,k_1,\iota}^a$ there is
    $\widehat x\in B(x_{\ell,\iota},\widehat\tau(\ell,\iota))$
    with $x^k\to \widehat x$, and further $\delta_k\searrow 0$, there
    is an integer $k_2\ge k_1$ such that
    \begin{align} \label{event:characterization} x^k\in B(\widehat
      x,\widehat\tau(\ell,\iota))\;{\rm and}\;
      \delta_k\le\eta_a(\ell)/3 \;\textup{for all}\; k\ge k_2.
  \end{align}
  
  Denoting $\mathcal E_{\ell,k_1,\iota,k_2}^a$ the events
  $\mathcal E_{\ell,k_1,\iota}^a$ for which
  \eqref{event:characterization} holds for a given $k_2\ge k_1$, we
  get that
  $\mathcal E_{\ell,k_1,\iota}^a=\cup_{k_2\ge k_1}\mathcal
  E_{\ell,k_1,\iota,k_2}^a$. 
  We claim that
  \begin{align} \label{run:U:property} \textup{for each run
      from}\;\mathcal E_{\ell,k_1,\iota,k_2}^a\;\textup{necessarily}\;
    (x^{k1},\ldots,x^{km})\notin U(\ell,\iota),\;\textup{for
      all}\;k\ge k_2.
  \end{align}
  Indeed, suppose on the contrary that
  $(x^{\kappa 1},\ldots,x^{\kappa m})\in U(\ell,\iota)$ for some
  $\kappa\ge k_2$. Combining this with the fact that
  $x^\kappa\in B(x_{\ell,\iota},2\widehat\tau(\ell,\iota))$, we get
  from \eqref{U:containment:for:main:thm} that
  \begin{align} \label{xkis:inU:subset:V} (x^{\kappa
      1},\ldots,x^{\kappa m})\in U(\ell,\iota)\subset
    V_\eps(x_{\ell,\iota},x^\kappa,\eta_a(\ell)).
  \end{align} 
  Taking further into account that $\delta_\kappa\le\eta_a(\ell)/3$ by
  \eqref{event:characterization}, which together with
  \eqref{Gk:dfn:second:part} and \eqref{psi:dfn} yields
  \begin{align*}
    \dist(\theta_\star^{\kappa i},\theta_\star(x^{\kappa i}))
    \le\frac{\delta_\kappa}{L_{\nabla_x^\theta F}(x^{\kappa i})} 
    \le\frac{\eta_a(\ell)}{3L_{\nabla_x^\theta F}(x^{\kappa i})}=
    \psi(x^{\kappa i},\eta_a(\ell)),
  \end{align*}
  we get from \eqref{eta:a:value}, \eqref{xkis:inU:subset:V}, and the
  definition of $V_\eps$ in \eqref{set:V:dfn} 
  that
  \begin{align*}
    \dist(0,\conv(\{\nabla_xF(x^{\kappa i},\theta_\star^{\kappa i})\}_{i=1}^m))\le 
    \rho_\eps(x_{\ell,\iota})+\eta_a(\ell)=\nu/2,
  \end{align*}   
  where we  exploit that $\rho_\eps(x_{\ell,\iota})=0$ because
  $0\in G_\eps(x_{\ell,\iota})$.  But this contradicts
  \eqref{convex:hull:above:nu}, and we conclude that
  \eqref{run:U:property} holds. 
   
  Note next that from \eqref{set:V:dfn},
  \eqref{U:containment:for:main:thm}, \eqref{event:characterization},
  and the fact that
  $\widehat x\in B(x_{\ell,\iota},\widehat\tau(\ell,\iota))$, we get
  $U(\ell,\iota)\subset D_\eps^m(x^k)$ for all $k\ge k_2$. Thus, since
  $(x^{k1},\ldots,x^{km})$ is uniformly sampled with respect to the
  Lebesgue measure on $\cl(D_\eps^m(x^k))$ and $U(\ell,\iota)$ has
  nonzero measure in it for all $k\ge k_2$, we deduce from
  \eqref{run:U:property} that the event
  $\mathcal E_{\ell,k_1,\iota,k_2}^a$ has probability zero. We provide
  a detailed justification of this fact in
  Appendix~\ref{appendix:MP}. Since there is at most a countable
  number of indices $k_2\ge k_1$, we also deduce that the event
  $\mathcal E_{\ell,k_1,\iota}^a$ has probability zero.

  \noindent \textit{Case (b): $0\notin G_\eps(x_{\ell,\iota})$.} We
  respectively denote by $\mathcal E_{\ell,k_1,\iota}^b$ the events of
  this case. By selecting $\eta_b(\ell,x_{\ell,\iota})$ according to
  Lemma~\ref{lemma:beta:separation} with
  $C\equiv G_\eps(x_{\ell,\iota})$ and
  $\eps\equiv\eps(\ell)\equiv\eps_1\mu^{\ell-1}$, we get from
  Corollary~\ref{corollary:covering}(ib) and
  Proposition~\ref{prop:U:properties:and:tk:bound}(ii) that there is a
  nonzero measure subset $U(\ell,\iota)$ of $\Rat{mn}$ satisfying
  \eqref{U:containment:for:main:thm} with
  $\eta_b(\ell,x_{\ell,\iota})$ in place of
  $\eta_a(\ell)$. Here $\widehat \tau(\ell,\iota)$ in \eqref{U:containment:for:main:thm} is again determined by Corollary~\ref{corollary:covering}(ii).
  These results additionally guarantee that each time Step~5 of the
  mGS algorithm is reached,
  \begin{align}
    \label{four:premise:implication}
    x^k \in B(x_{\ell,\iota},2\widehat\tau(\ell,\iota)),\; \eps_k & =\eps(\equiv\eps(\ell)),\nonumber \\
    \delta_k & \le\eta_b(\ell,x_{\ell,\iota})/3,\; 
               {\rm and}\; (x^{k1},\ldots,x^{km}) \in U(\ell,\iota) \Longrightarrow\; t_k
               \ge\gamma\eps_k/3.
  \end{align}       
  Since for each event in $\mathcal E_{\ell,k_1,\iota}^b$ there is
  some $\widehat x\in B(x_{\ell,\iota},\widehat\tau(\ell,\iota))$ such
  that $x^k\to \widehat x$, and also $\delta_k\searrow 0$, and
  $t_k\to 0$, there exists $k_2\ge k_1$ so that
  \begin{align} \label{event:characterization:new} x^k\in B(\widehat
    x,\widehat\tau(\ell,\iota)),\; \eps_k=\eps,\;
    \delta_k\le\eta_b(\ell,x_{\ell,\iota})/3,\;{\rm and}\;
    t_k<\gamma\eps/3\;\textup{for all}\; k\ge k_2.
  \end{align}

  As in Case (a), we denote by $\mathcal E_{\ell,k_1,\iota,k_2}^b$ the
  set of such events for a fixed $k_2$.  We claim that for each
  algorithm run from $\mathcal E_{\ell,k_1,\iota,k_2}^b$ it holds
  $(x^{k1},\ldots,x^{km})\notin U$ for all $k\ge k_2$. Indeed,
  consider such a run and assume that otherwise
  $(x^{\kappa 1},\ldots,x^{\kappa m}) \in U(\ell,\iota)$ for
  some $\kappa\ge k_2$. Combining this with the fact that
  $\widehat x\in B(x_{\ell,\iota},\widehat\tau(\ell,\iota))$, and that
  $x^\kappa\in B(\widehat x,\widehat\tau(\ell,\iota))$,
  $\eps_\kappa=\eps$, and
  $\delta_\kappa\le\eta_b(\ell,x_{\ell,\iota})/3$ by
  \eqref{event:characterization:new}, we deduce from
  \eqref{four:premise:implication} that $t_\kappa\ge\gamma\eps/3$.
  But this contradicts the last property of
  \eqref{event:characterization:new}.
 
  For each of the events in $\mathcal E_{\ell,k_1,\iota,k_2}^b$, we
  thus have $x^k\in B(\widehat x,\widehat\tau(\ell,\iota))$ for all
  $k\ge k_2$ and some
  $\widehat x\in
  B(x_{\ell,\iota},\widehat\tau(\ell,\iota))$. Therefore, using also
  \eqref{U:containment:for:main:thm} with
  $\eta_b(\ell,x_{\ell,\iota})$ in place of $\eta_a(\ell)$, we get
  $U(\ell,\iota)\subset
  V_\eps(x_{\ell,\iota},x^k,\eta_b(\ell,x_{\ell,\iota}))$, and we obtain
  from \eqref{set:V:dfn} that $U(\ell,\iota)\subset D_\eps^m(x^k)$.
  From here, applying the same exact arguments as in Case (a), we
  deduce that the event $\mathcal E_{\ell,k_1,\iota}^b$ has also
  probability zero, which concludes Case (b).
  
  Since the index tuples $(\ell,k_1,\iota)$ range over a countable set
  and each event
  $\mathcal E_{\ell,k_1,\iota}=\mathcal
  E_{\ell,k_1,\iota}^a\cup\mathcal E_{\ell,k_1,\iota}^b$ has
  probability zero by Cases (a) and (b), the event
  \begin{align*}
  \mathcal E=\cup_{(\ell,k_1,\iota)}\mathcal E_{\ell,k_1,\iota}
  \end{align*}
  has again probability zero and we obtain that
  $\nu_k,\eps_k\searrow 0$ with probability~one.
  
  \noindent \textit{Step~3: Proof that each accumulation point is
    Clarke stationary.}  The proof of this step relies on showing that
  \begin{align*}
    \liminf_k\max\{\|x^k-x_\star\|,\|g^k\|\}=0,
  \end{align*}
  for any accumulation point $x_\star$ of $\{x^k\}$, and, then
  applying Lemma~\ref{lemma:convergence}. The arguments are exactly
  the same as those in part (iii) of the proof of
  \cite[Theorem~3.8]{KCK:10} and are thus omitted.
\end{proof}

\begin{rem}\longthmtitle{Generalized convergence guarantees for
    problems with additive structure}\label{rem:additive:cost}
  {\rm
    Assume that the objective function $F$ can be decomposed as the
    sum
    \begin{align*}
      F(x,\theta)=F_1(x,\theta)+F_2(x),
    \end{align*}
  where $F_1$ satisfies Assumption~\ref{assumptions:for:F} for some
  open set of full-measure $\D_1$ and $F_2$ is continuously
  differentiable relative to another set of full
  measure $\D_2$ (cf. Proposition~\ref{prop:lower:C2:implications}(iii) 
  for differentiability relative to a set). Assume further that in Step 1 of 
  the algorithm the points are sampled from $\D=\D_1\cap \D_2$, each gradient
  in~\eqref{Gk:dfn:first:part} has the form 
  \begin{align*}
    \nabla_x F(x^{ki},\theta_\star^{ki})
    =\nabla_x F_1(x^{ki},\theta_\star^{ki})+\nabla F_2(x^{ki}).
  \end{align*} 
  %
  %
  %
  Then, when $f$ defined as in \eqref{f:pointwise:max} is also locally
  Lipschitz and lower bounded, the
  conclusions of Theorem~\ref{thm:GS:convergence} remain
  valid. Namely, the mGS algorithm does not stop and all its
  accumulation points are Clarke stationary for~$f$.} \oprocend
  \end{rem}

\section{Attractivity guarantees for convex domains}\label{sec:invariance}

Here, we elaborate on the case where we are interested in the values 
of $f$ inside a compact subset ${\mathcal K}$ of $\Rat{n}$, and provide 
sufficient conditions that render ${\mathcal K}$ an attractive set for the 
iterates of the mGS algorithm. 
%
%
%
We assume that ${\mathcal K}=\prod_{j=1}^J{\mathcal K_j}$ for some 
compact and convex ${\mathcal K_j}\subset\Rat{n_j}$ with 
$n_1+\cdots+n_J=n$. This does not restrict
the generality of our analysis since we can always assume $J=1$. 
As a motivation for the analysis of this section consider the 
distributionally robust coverage problem \eqref{DRO:coverage:general} 
with $J$ agents on a convex workspace $Q\subset \Rat{m}$. As we are 
interested in optimizing their position exclusively within $Q$, we 
seek to apply the results of this section with $\mathcal K\equiv Q^J$. We make 
the following assumption.

\begin{assumption}\longthmtitle{Gradient cone property} \label{assumption:gradient:cone}
  Let $F$ satisfy Assumption~\ref{assumptions:for:F} and 
  assume additionally that its gradients
  satisfy the uniform bound
  \begin{align*}
    \langle \nabla_x F(x,\theta),\eta(x)\rangle \ge\alpha(x),
  \end{align*}
  for all $x\in\mathcal D\setminus {\mathcal K}$ and $\theta\in\Theta$,
  %
  %
  where
  $\eta(x):=(x-{\proj_{\mathcal K}}(x))/\|x-{\proj_{\mathcal 
  K}}(x)\|$ and $\alpha$ is 
  continuous 
  and strictly positive on $\Rat{n}\setminus {\mathcal K}$.  In addition,
  \begin{align*}
    \nabla_{x_j} F(x,\theta)\in\cone{x_j-{\mathcal K_j}},
  \end{align*}
  for all $j=1,\ldots,J$, $x=(x_1,\ldots,x_J)\in \mathcal D$, and 
  $\theta\in\Theta$.  
\end{assumption} 

We next show that, under Assumption~\ref{assumption:gradient:cone},
every sequence of points $x^k$ generated by the mGS algorithm
converges to~${\mathcal K}$. To prove this, we use the following estimate 
for the approximation of the Clarke generalized gradient through the sampled
gradients taken during Step~1 of the algorithm.

\begin{lemma}\longthmtitle{Sampled gradient direction
    bound}\label{lemma:sampled:gradient:direction:bound}
  For any $r>0$, there exists $\epsilon>0$ so that for each
  $x\in{\mathcal K_{\oplus r}^{\rm compl}}:=\Rat{n}\setminus 
  B({\mathcal K},r)$,
  %
  %
  $\{\theta^i\}_{i=1}^m\subset\Theta$, and
  $\{y^i\}_{i=1}^m\subset B(x,\epsilon)\cap \mathcal D$, where $m\in\mathbb N$, it holds
  \begin{align*}
    \langle g,\eta(x)\rangle\ge\alpha(x)/3,\;\textup{for all}\;
    g\in\conv(\{\nabla_x F(y^1,\theta^1),\ldots, \nabla_x
    F(y^m,\theta^m)\}). 
  \end{align*}
\end{lemma}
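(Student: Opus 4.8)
The plan is to reduce the statement to a bound on a single sampled gradient and then compare the outward direction $\eta(\cdot)$ at $x$ with the one at a nearby point. Since every $g\in\conv(\{\nabla_x F(y^1,\theta^1),\ldots,\nabla_x F(y^m,\theta^m)\})$ is a convex combination $g=\sum_{i=1}^m\lambda_i\nabla_x F(y^i,\theta^i)$ with $\lambda_i\ge 0$, $\sum_i\lambda_i=1$, and $v\mapsto\langle v,\eta(x)\rangle$ is linear, it suffices to produce $\epsilon>0$ depending only on $r$ such that $\langle\nabla_x F(y,\theta),\eta(x)\rangle\ge\alpha(x)/3$ for every $x\in{\mathcal K_{\oplus r}^{\rm compl}}$, every $y\in B(x,\epsilon)\cap\mathcal D$, and every $\theta\in\Theta$; the convex-hull bound then follows because the $\lambda_i$ sum to one. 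I would take $\epsilon<r/2$ from the outset, so that $\dist(y,{\mathcal K})\ge r-\epsilon>0$, hence $y\notin{\mathcal K}$, and Assumption~\ref{assumption:gradient:cone} applies at $y$, giving $\langle\nabla_x F(y,\theta),\eta(y)\rangle\ge\alpha(y)$.

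Next I would split $\langle\nabla_x F(y,\theta),\eta(x)\rangle=\langle\nabla_x F(y,\theta),\eta(y)\rangle+\langle\nabla_x F(y,\theta),\eta(x)-\eta(y)\rangle$. The first term is at least $\alpha(y)$, which by continuity of $\alpha$ stays above $\alpha(x)/2$ once $\epsilon$ is small. For the cross-term I would exploit the product structure ${\mathcal K}=\prod_j{\mathcal K_j}$ and the block-cone inclusion $\nabla_{x_j}F(y,\theta)\in\cone{y_j-{\mathcal K_j}}$: write $\nabla_{x_j}F(y,\theta)=\mu_j(y_j-w_j)$ with $\mu_j\ge 0$ and $w_j\in{\mathcal K_j}$, and combine the obtuse-angle characterization of $\proj_{\mathcal K_j}$ (namely $\langle z-\proj_{\mathcal K_j}(z),w-\proj_{\mathcal K_j}(z)\rangle\le 0$ for $w\in{\mathcal K_j}$) with the nonexpansiveness of $\proj_{\mathcal K}$. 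The latter yields $\|(x-\proj_{\mathcal K}(x))-(y-\proj_{\mathcal K}(y))\|\le 2\|x-y\|\le 2\epsilon$ and, since $\|x-\proj_{\mathcal K}(x)\|=\dist(x,{\mathcal K})\ge r$, the estimate $\|\eta(x)-\eta(y)\|\le 4\epsilon/r$ (via $\bigl\|a/\|a\|-b/\|b\|\bigr\|\le 2\|a-b\|/\|a\|$). Estimating block by block, the blocks $j$ with $\dist(x_j,{\mathcal K_j})\ge\epsilon$ contribute a nonnegative amount to $\langle\nabla_x F(y,\theta),\eta(x)\rangle$, while the near-boundary blocks contribute an error controlled by a multiple of $\epsilon$ through the cone representation; choosing $\epsilon$ small relative to $r$ and to the modulus of continuity of $\alpha$ then gives $\langle\nabla_x F(y,\theta),\eta(x)\rangle\ge\alpha(y)-c\epsilon\ge\alpha(x)/3$.

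The step I expect to be the main obstacle is obtaining a single $\epsilon$ valid uniformly over the \emph{unbounded} set ${\mathcal K_{\oplus r}^{\rm compl}}$, since one cannot simply invoke uniform continuity of $\nabla_x F$ there. This is precisely what the block-cone property is for: it converts the troublesome cross-term $\langle\nabla_x F(y,\theta),\eta(x)-\eta(y)\rangle$ and the near-boundary-block contributions into quantities scaling like $\epsilon/r$, so that no global Lipschitz bound on $\nabla_x F$ is required; the only remaining uniform ingredient is a lower bound on $\alpha$ over the region under consideration, furnished by its continuity together with the compactness of the relevant portion of the decision space.
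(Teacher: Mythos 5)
Your overall skeleton coincides with the paper's: reduce to a single sampled gradient by linearity of $v\mapsto\langle v,\eta(x)\rangle$, split $\langle\nabla_x F(y,\theta),\eta(x)\rangle$ into $\langle\nabla_x F(y,\theta),\eta(y)\rangle+\langle\nabla_x F(y,\theta),\eta(x)-\eta(y)\rangle$, lower-bound the first term by $\alpha(y)$ via Assumption~\ref{assumption:gradient:cone} and then by a fixed fraction of $\alpha(x)$ via continuity of $\alpha$, and control $\|\eta(x)-\eta(y)\|$ by a quantity of order $\epsilon/r$ (your nonexpansiveness estimate $\|\eta(x)-\eta(y)\|\le 4\epsilon/r$ is a coarser but perfectly serviceable substitute for the sharper bound of Lemma~\ref{lemma:normal:deviation:around:ball}).

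The gap is in the cross-term. The paper bounds $\langle\nabla_x F(y,\theta),\eta(x)-\eta(y)\rangle$ by Cauchy--Schwarz after invoking Assumption~\ref{assumptions:for:F}(ii) to obtain a uniform-in-$\theta$ bound $L$ on $\|\nabla_x F(\cdot,\theta)\|$ over the bounded set $B(x,\epsilon)\cap\mathcal D$, and then shrinks $\epsilon$ until $L\,\|\eta(x)-\eta(y)\|\le\alpha(x)/3$. You explicitly decline to use any gradient-norm bound and claim the block-cone inclusion $\nabla_{x_j}F(y,\theta)\in\cone{y_j-\mathcal K_j}$ makes the error scale like $\epsilon/r$ alone. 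It does not. Writing $\nabla_{x_j}F(y,\theta)=\mu_j(y_j-w_j)$ with $w_j\in\mathcal K_j$ and expanding $\langle \mu_j(y_j-w_j),\,x_j-\proj_{\mathcal K_j}(x_j)\rangle$ with the obtuse-angle inequality only yields \emph{nonnegativity} of certain block contributions; the leftover pieces (e.g.\ $\langle\nabla_{x_j}F(y,\theta),y_j-x_j\rangle$, and the near-boundary blocks where $\mu_j=\|\nabla_{x_j}F(y,\theta)\|/\|y_j-w_j\|$ is uncontrolled because $\|y_j-w_j\|$ can be arbitrarily small) are of order $\epsilon\,\|\nabla_x F(y,\theta)\|$, not $\epsilon$. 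More fundamentally, nonnegativity of $\langle\nabla_x F(y,\theta),\eta(x)\rangle$ cannot deliver the strictly positive bound $\alpha(x)/3$; to get that you must transfer the bound $\alpha(y)$ from the direction $\eta(y)$ to the direction $\eta(x)$, and that transfer inevitably costs $\|\nabla_x F(y,\theta)\|\,\|\eta(x)-\eta(y)\|$. The fix is exactly the step you omitted: invoke Assumption~\ref{assumptions:for:F}(ii). Your stated obstacle --- a single $\epsilon$ uniform over the unbounded set $\mathcal K_{\oplus r}^{\rm compl}$ --- is also a red herring: the paper's proof fixes $x$ first and chooses $\epsilon$ depending on $x$, the lemma is applied in Theorem~\ref{thm:invariance} only on the compact set $\widehat{\mathcal K}_{\oplus\delta}$ where compactness restores uniformity, and indeed neither $\alpha$ nor the local Lipschitz constants admit uniform control on the unbounded complement under the stated assumptions.
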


The proof is given in the Appendix. Next, we establish that an
appropriate inflation of ${\mathcal K}$ by a bounded ball that is
invariant for the algorithm under an upper bound $\tau$ on its step
size, $t_k$.  Given the lower bounds on the initial step size in Step~4
of the mGS algorithm, $\tau$ can be any positive number that is not
less than $\gamma\eps_0/3$.
%
%

\begin{prop}\longthmtitle{Invariance of an inflated
    domain}\label{prop:boundeness}
  Under Assumption~\ref{assumption:gradient:cone} and an upper bound
  $\tau\ge\gamma\eps_0/3$ on the step size $t_k$ of the mGS algorithm,
  for any $\tau^\star\ge\tau$ and
  $x^0\in\prod_{j=1}^J B({\mathcal K_j},\tau^\star+\eps)$, the
  sequence $x^k$ generated by the algorithm remains inside
  $\prod_{j=1}^JB({\mathcal K_j}, \tau^\star+\eps)$, where $\eps$ is
  equal to the maximum (initial) sampling radius $\eps_0$.
\end{prop}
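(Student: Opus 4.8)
My plan is to prove the statement by induction on $k$, showing that $x^k\in\prod_{j=1}^JB(\mathcal K_j,\tau^\star+\eps_0)$, where I abbreviate $\eps_0$ for the maximal (initial) sampling radius; since the sampling radius is nonincreasing along the iterations (Step~3 only ever multiplies it by $\mu\in(0,1)$), we have $\eps_k\le\eps_0$ for all $k$. The base case $k=0$ is exactly the hypothesis on $x^0$. For the inductive step I assume $x^k\in\prod_jB(\mathcal K_j,\tau^\star+\eps_0)$ and argue coordinatewise that $x^{k+1}_j\in B(\mathcal K_j,\tau^\star+\eps_0)$ for each $j$. If $t_k=0$ there is nothing to do since $x^{k+1}=x^k$, so I assume $t_k>0$; then necessarily the ``Else'' branch of Step~3 was taken, so $d^k=-g^k/\|g^k\|$ with $\|g^k\|>0$, and $t_k\le\tau$ by the standing upper bound.

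The core of the argument is an algebraic description of the $j$-th block $g^k_j$ of the search direction. By Step~2, $g^k=\sum_{i=1}^m\lambda_{ki}\,\nabla_xF(x^{ki},\theta_\star^{ki})$ for convex weights $\lambda_{ki}\ge0$, $\sum_i\lambda_{ki}=1$; since the algorithm did not stop at Step~1, $x^{ki}\in\mathcal D$, so the second part of Assumption~\ref{assumption:gradient:cone} applies and $\nabla_{x_j}F(x^{ki},\theta_\star^{ki})=\mu_{ki}(x^{ki}_j-p_{ki})$ for some $\mu_{ki}\ge0$ and $p_{ki}\in\mathcal K_j$. Hence $g^k_j=\sum_i\lambda_{ki}\mu_{ki}(x^{ki}_j-p_{ki})$. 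Put $M:=\sum_i\lambda_{ki}\mu_{ki}$. If $M=0$ then $g^k_j=0$, so the $j$-block of $d^k$ vanishes and $x^{k+1}_j=x^k_j\in B(\mathcal K_j,\tau^\star+\eps_0)$ by the inductive hypothesis. Otherwise, with $w_i:=\lambda_{ki}\mu_{ki}/M\ge0$, $\sum_iw_i=1$, I obtain $g^k_j=M(x^k_j+\bar\delta-\bar p_j)$, where $\bar p_j:=\sum_iw_ip_{ki}\in\mathcal K_j$ by convexity of $\mathcal K_j$ and $\bar\delta:=\sum_iw_i(x^{ki}_j-x^k_j)$ satisfies $\|\bar\delta\|\le\max_i\|x^{ki}-x^k\|\le\eps_k\le\eps_0$ because the $x^{ki}$ are drawn from $B(x^k,\eps_k)$. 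Consequently
\[
  x^{k+1}_j-x^k_j=t_kd^k_j=-c\,(x^k_j+\bar\delta-\bar p_j),\qquad c:=\frac{t_kM}{\|g^k\|}\ge0,
\]
and $\|x^{k+1}_j-x^k_j\|=t_k\|d^k_j\|\le t_k\le\tau$ since $\|d^k_j\|\le\|d^k\|=1$.

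I then finish with a dichotomy on $c$. If $c\le1$, then $x^{k+1}_j=(1-c)x^k_j+c(\bar p_j-\bar\delta)$ is a convex combination of $x^k_j\in B(\mathcal K_j,\tau^\star+\eps_0)$ and of $\bar p_j-\bar\delta\in B(\bar p_j,\eps_0)\subset B(\mathcal K_j,\eps_0)\subset B(\mathcal K_j,\tau^\star+\eps_0)$; since $\mathcal K_j$ is convex, the inflation $B(\mathcal K_j,\tau^\star+\eps_0)=\mathcal K_j+(\tau^\star+\eps_0)B(0,1)$ is convex, whence $x^{k+1}_j\in B(\mathcal K_j,\tau^\star+\eps_0)$. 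If instead $c>1$, I eliminate $\bar p_j$ from the displacement identity, which gives $x^{k+1}_j-\bar p_j=(1-1/c)(x^{k+1}_j-x^k_j)-\bar\delta$; since $0<1-1/c<1$, $\|x^{k+1}_j-x^k_j\|\le\tau$, $\|\bar\delta\|\le\eps_0$, and $\bar p_j\in\mathcal K_j$, this yields $\dist(x^{k+1}_j,\mathcal K_j)\le\|x^{k+1}_j-\bar p_j\|\le\tau+\eps_0\le\tau^\star+\eps_0$. In both cases $x^{k+1}_j\in B(\mathcal K_j,\tau^\star+\eps_0)$; as $j$ was arbitrary, $x^{k+1}\in\prod_jB(\mathcal K_j,\tau^\star+\eps_0)$ and the induction closes.

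The step I expect to require the most care is the representation $g^k_j=M(x^k_j+\bar\delta-\bar p_j)$: the sampled blocks $\nabla_{x_j}F(x^{ki},\theta_\star^{ki})$ lie in \emph{different} cones $\cone{x^{ki}_j-\mathcal K_j}$ anchored at the different sample points $x^{ki}_j$, yet their convex combination can still be written as a nonnegative multiple of ``a point within $\eps_0$ of $x^k_j$ minus a point of $\mathcal K_j$'', precisely because $\mathcal K_j$ is convex (so the induced averages $\bar p_j$ stay in $\mathcal K_j$) and the sample points lie in $B(x^k,\eps_k)$ with $\eps_k\le\eps_0$. Everything downstream — the $c\le1$ versus $c>1$ split, which plays convexity of the inflated set against the uniform step-size bound $t_k\le\tau$ — is elementary. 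I note that this argument uses only the second (per-coordinate cone) clause of Assumption~\ref{assumption:gradient:cone} together with the bound on $t_k$; the radial clause and Lemma~\ref{lemma:sampled:gradient:direction:bound} are needed only for the subsequent convergence-to-$\mathcal K$ statement, not for this invariance claim.
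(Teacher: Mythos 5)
Your proof is correct, but it takes a genuinely different route from the paper's. The paper also reduces to a coordinatewise inductive step and uses only the per-coordinate cone clause of Assumption~\ref{assumption:gradient:cone}, but it argues geometrically: it first disposes of the easy case $x_j^k\in B(\mathcal K_j,\eps)$, and otherwise observes that $d_j^k$ lies in the cone generated by $B(\mathcal K_j,\eps)-x_j^k$ (since each sample satisfies $x_j^{ki}\in B(x_j^k,\eps)$), introduces the first time $s$ at which the ray $t\mapsto x_j^k+td_j^k$ meets $B(\mathcal K_j,\eps)$, and splits on $s\le t^k$ versus $s>t^k$ — in the latter case interpolating between $z:=x_j^k+sd_j^k$ and the projection $y$ of $x_j^k$ onto $B(\mathcal K_j,\eps)$ and using convexity of $B(\mathcal K_j,\eps)$ to produce a point $y'$ with $\|x_j^{k+1}-y'\|\le\tau^\star$. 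Your argument replaces the hitting-time construction with the explicit representation $g_j^k=M(x_j^k+\bar\delta-\bar p_j)$, $\bar p_j\in\mathcal K_j$, $\|\bar\delta\|\le\eps_0$, and the dichotomy on the contraction factor $c=t_kM/\|g^k\|$; the two case splits are related (your $c>1$ is essentially the paper's ``overshoot'' case $s\le t^k$) but not identical. What your version buys is that it sidesteps the need to argue that $s$ is finite and positive (a point the paper asserts from its cone inclusion, which as written even has the sign of $d^k=-g^k/\|g^k\|$ absorbed implicitly), handles the $x_j^k\in B(\mathcal K_j,\eps)$ case without separate treatment, and makes the role of convexity of $\mathcal K_j$ (through $\bar p_j\in\mathcal K_j$ and convexity of the Minkowski inflation) completely explicit; the paper's version is shorter once the cone inclusion for $d_j^k$ is granted. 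Your closing remark that only the second clause of the assumption and the step-size bound are used matches the paper's usage exactly.
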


The proof is also given in the Appendix. Using these auxiliary
results, we are ready to prove that the iterates of the algorithm
converge indeed to~${\mathcal K}$.

\begin{thm}\longthmtitle{Convergence of the modified GS sequence to
    ${\mathcal K}$}\label{thm:invariance}
  Under Assumption~\ref{assumption:gradient:cone}, for any initial
  point $x_0\in\Rat{n}$,
  the sequence $\{x^k\}$ of points generated by the modified
  GS algorithm converges to~${\mathcal K}$.
\end{thm}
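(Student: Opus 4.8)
The plan is to combine the two auxiliary results — Lemma~\ref{lemma:sampled:gradient:direction:bound} (sampled gradients point "outward" from $\mathcal K$ with a definite angular margin) and Proposition~\ref{prop:boundeness} (an inflated product of balls is invariant once a step-size bound is in force) — into a contradiction argument showing that $\dist(x^k,\mathcal K)\to 0$. First I would dispose of the trivial bookkeeping: by Proposition~\ref{prop:boundeness}, starting from an arbitrary $x_0\in\Rat n$, after at most finitely many iterations the iterates enter and thereafter remain inside a fixed compact set $\mathcal C:=\prod_{j=1}^J B(\mathcal K_j,\tau^\star+\eps_1)$ (take $\tau^\star$ large enough that $x_0$ lies in the inflated set; the sampling radius is nonincreasing so $\eps_k\le\eps_1$ and the initial step-size bound $\tau\ge\gamma\eps_1/3$ is respected because $t_k\le t_{k,\mathrm{init}}$ is at our disposal, or else one invokes that all later $\eps_k$ only shrink). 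Hence $\{x^k\}$ is bounded and it suffices to show every accumulation point lies in $\mathcal K$.

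Next I would argue by contradiction: suppose $r:=\limsup_k\dist(x^k,\mathcal K)>0$. Apply Lemma~\ref{lemma:sampled:gradient:direction:bound} with this $r/2$ (say) to obtain $\epsilon>0$ such that whenever $x^k\in\mathcal K_{\oplus r/2}^{\mathrm{compl}}$ and the sampled points $x^{k1},\dots,x^{km}$ all lie in $B(x^k,\epsilon)\cap\mathcal D$, every $g\in G_k$ — in particular the minimum-norm element $g^k$ — satisfies $\langle g^k,\eta(x^k)\rangle\ge\alpha(x^k)/3\ge c_0>0$, where $c_0:=\min\{\alpha(x)/3 : x\in\mathcal C,\ \dist(x,\mathcal K)\ge r/2\}>0$ by continuity and strict positivity of $\alpha$ on the compact set in question. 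By Theorem~\ref{thm:GS:convergence}, with probability one $\eps_k\searrow 0$; so for $k$ large, $\eps_k\le\epsilon$, which forces the sampled points into $B(x^k,\epsilon)$. Also $\|g^k\|\le M:=\max_{x\in\mathcal C}\max_{\theta}\|\nabla_xF(x,\theta)\|$, a finite bound by Assumption~\ref{assumptions:for:F}. The point is now that whenever $x^k$ is at distance $\ge r/2$ from $\mathcal K$, the descent direction $d^k=-g^k/\|g^k\|$ has a negative component along $\eta(x^k)$ of size at least $c_0/M$, i.e. $d^k$ points strictly toward $\mathcal K$; moreover the second cone condition in Assumption~\ref{assumption:gradient:cone} gives the coordinatewise statement that each block $-g^k_j$ points into $\mathcal K_j-x_j$, so a step along $d^k$ strictly decreases $\dist(x^k_j,\mathcal K_j)$ for every $j$ with $x^k_j\notin\mathcal K_j$. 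Consequently $\dist(x^{k+1},\mathcal K)\le\dist(x^k,\mathcal K)$ for all large $k$, and strictly so — with a quantitative decrement $\ge (c_0/M)\,t_k$ — whenever $\dist(x^k,\mathcal K)\ge r/2$.

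It remains to rule out that the iterates hover in the annulus $r/2\le\dist(\cdot,\mathcal K)\le 2r$ without the step sizes summing up. Here I would use $\sum_k t_k\|g^k\|<\infty$ from \eqref{summable:sequences}: on the subsequence of indices $k$ with $\dist(x^k,\mathcal K)\ge r/2$ one has $\|g^k\|\ge \dist(0,G_k)$; if infinitely many such $k$ have $\|g^k\|$ bounded below, then $\sum t_k<\infty$ along them, hence the total "outward" distance lost over those steps is finite, and since each such step decreases $\dist(\cdot,\mathcal K)$ while steps with $\dist(x^k,\mathcal K)<r/2$ cannot push it back above $2r$ once $t_k\le \tau$ small, eventually $\dist(x^k,\mathcal K)<r/2$ for all large $k$ — contradicting $\limsup=r$. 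The remaining subtle case is $\|g^k\|\to 0$ along a subsequence staying in the annulus; but then that subsequence has an accumulation point $x_\star$ with $\dist(x_\star,\mathcal K)\ge r/2$, and, passing to the limit in $\langle g^k,\eta(x^k)\rangle\ge\alpha(x^k)/3$ using continuity of $\eta$ off $\mathcal K$, we get $0=\langle 0,\eta(x_\star)\rangle\ge\alpha(x_\star)/3>0$, a contradiction. Either way $r=0$, so $\dist(x^k,\mathcal K)\to 0$, i.e. $\{x^k\}$ converges to $\mathcal K$.

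The main obstacle I anticipate is the bookkeeping in the last paragraph: making precise that a step taken while $\dist(x^k,\mathcal K)<r/2$ cannot carry the iterate past distance $2r$ (which needs the step-size upper bound $\tau$, available from Step~4 via $t_k\le t_{k,\mathrm{init}}$ or from $t_k\le\eps_k\to0$), and cleanly separating the two cases $\liminf\|g^k\|>0$ versus $\|g^k\|\to0$ along the "bad" subsequence. The coordinatewise cone condition is what lets distance-to-$\mathcal K$ behave monotonically block by block; without it one would only control the aggregate direction and the annulus argument would be messier. All the "outward gradient" content is already packaged in Lemma~\ref{lemma:sampled:gradient:direction:bound}, so modulo these estimates the proof is short.
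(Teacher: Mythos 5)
Your skeleton---Proposition~\ref{prop:boundeness} for confinement to a compact inflated set, Lemma~\ref{lemma:sampled:gradient:direction:bound} for a definite outward component of the sampled gradients, and a contradiction from a subsequence staying at distance at least $r/2$ from $\mathcal K$---matches the paper's, and your ``remaining subtle case'' is in fact the paper's central contradiction: by Cauchy--Schwarz, $\|g^{k_m}\|\ge\langle g^{k_m},\eta(x^{k_m})\rangle\ge\alpha(x^{k_m})/3\ge\alpha_\star>0$ on the compact annulus, so $\|g^{k_m}\|\to 0$ is impossible there (no passage to the limit is even needed). The genuine gaps sit in the main body of your argument, the monotone decrease of $\dist(\cdot,\mathcal K)$ and the annulus bookkeeping, and they are not mere bookkeeping. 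The per-step decrement $\dist(x^{k+1},\mathcal K)\le\dist(x^k,\mathcal K)-(c_0/M)t_k$ does not follow from the angle bound: that bound controls only the one-sided directional derivative of the convex function $t\mapsto\dist(x^k+td^k,\mathcal K)$ at $t=0$, and convexity yields an inequality in the wrong direction for finite $t$; a direct computation with $\|D\eta(x^k)+t_kd^k\|^2=D^2+2Dt_k\langle\eta(x^k),d^k\rangle+t_k^2$ shows the decrement degrades by $O(t_k^2/D)$ and can vanish when $t_k$ is comparable to $Dc_0/M$, and the algorithm supplies no smallness of $t_k$ relative to $r$ (in particular $t_k\le\eps_k$ is not an algorithm rule). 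Likewise, the claim that steps taken while $\dist(x^k,\mathcal K)<r/2$ ``cannot push it back above $2r$'' fails whenever the fixed step-size bound $\tau\ge\gamma\eps_0/3$ is not small compared to the run-dependent $r$: a single step launched from just inside $B(\mathcal K,r/2)$, where Lemma~\ref{lemma:sampled:gradient:direction:bound} says nothing about $g^k$, can jump the whole annulus.

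The paper closes the argument without ever tracking $\dist(x^k,\mathcal K)$ along the iterates. On the bad subsequence it splits on whether the increments $\|x^{k_m}-x^{k_m+1}\|$ are bounded away from zero: if both $t_{k_m}$ and $\|g^{k_m}\|$ are bounded below, the descent inequality \eqref{f:decrease:rate} forces $f(x^{k_m})\to-\infty$, contradicting Assumption~\ref{assumption:for:f}; in every other case it extracts $\eps_{k_m}\to 0$ and $\|g^{k_m}\|\to 0$ from Steps~3--4 of the algorithm and contradicts the uniform lower bound $\|g^{k_m}\|\ge\alpha_\star$ above. If you replace your summability-of-displacements and annulus-crossing step by this dichotomy (lower boundedness of $f$ in one branch, the Cauchy--Schwarz bound in the other), your proof closes; the coordinatewise cone condition in Assumption~\ref{assumption:gradient:cone} is then needed only inside Proposition~\ref{prop:boundeness}, not for any monotonicity of the distance. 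A minor further difference: you import $\eps_k\searrow 0$ from Theorem~\ref{thm:GS:convergence}, which is an almost-sure statement, whereas the paper derives $\eps_{k_m}\to 0$ deterministically within each case of the contradiction.
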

\begin{proof}
  Let $x_0\in\Rat{n}$. Then there exists $\tau^\star\ge\tau$ such that
  $x^0\in\prod_{j=1}^JB({\mathcal K_j},\tau^\star+\eps)$, where $\tau$
  and $\eps$ can be specified as in Proposition~\ref{prop:boundeness}. By the
  same result, the iterates $x^k$ remain inside
  $\prod_{j=1}^JB({\mathcal K_j},\tau^\star+\eps)$ for all
  $k$. Thus, to prove the statement, it suffices to show that for
  every $\delta>0$, there exists an index $k_0$ so that
  $x^k\in B({\mathcal K},\delta)$ for all $k\ge k_0$. To establish
  this claim, assume on the contrary that there exist $\delta>0$ and a
  subsequence $\{x^{k_m}\}$ such that
  \begin{align*}
    x^{k_m}\in\widehat{\mathcal K}_{\oplus\delta}:= 
    \Big(\prod_{j=1}^JB({\mathcal K_j},\tau^\star+\eps)\Big) 
    \setminus\interior{B({\mathcal K},\delta)},
  \end{align*}
  for all $m$. It follows that, along this subsequence, infinitely many
  steps of the algorithm are \textit{not empty}, 
  %
  %
  which means that $x^{k_m}\ne x^{k_m+1}$ for infinitely many $m$. 
  %
  Indeed, otherwise this would imply by Step 3 of the mGS algorithm
  %
  %
  that the tolerance $\nu_{k_m}$ converges to zero. Thus, also
  $\|g^{k_m}\|\searrow 0$ and $\eps^{k_m}\searrow 0$. In addition,  
  from  Lemma~\ref{lemma:sampled:gradient:direction:bound}, there exists 
  $\epsilon>0$ such that  
  \begin{align} \label{gradients:in:callKdelta}
  x^{k_m}\in \widehat{\mathcal K}_{\oplus\delta}\;{\rm and}\;\eps^{k_m}\le\epsilon\Longrightarrow \langle g^{k_m},\eta(x^{k_m})\rangle\ge \alpha(x^{k_m})/3.
  \end{align}
  Thus, since $\eps^{k_m}\searrow 0$ and $x^{k_m}\in \widehat{\mathcal K}_{\oplus\delta}$ for all $m$, there exists $m_0\in\mathbb N$ such that 
  \begin{align}\label{g:norm:contradiction}
    \|g^{k_m}\|\ge \alpha_\star := 
    \min\{\alpha(x)/3\,|\,x\in \widehat{\mathcal K}_{\oplus\delta}\}>0,
  \end{align}
  for all $m\ge m_0$, where we used the fact that 
  $\widehat{\mathcal K}_{\oplus\delta}$ is compact in the last inequality. 
  But this contradicts the fact that $\|g^{k_m}\|\searrow 0$.
  %
  %
  We will henceforth assume without loss of generality that all steps
  $\{k_m\}$ are not empty. We distinguish two cases:

  \noindent \textit{Case (i): The increments $\|x^{k_m}-x^{k_m+1}\|$
    do not converge to zero.}
  If this is the case, then there exist $\tau_\star>0$ and a subsequence, which,
  with a slight abuse of notation we still denote by $\{x^{k_m}\}$, so that
  \begin{align*}
    \|x^{k_m}-x^{k_m+1}\|\ge\tau_\star ,
  \end{align*}
  for all $m$. We consider two further subcases:
  
  \noindent \textit{Case (ia): There exists $\nu_\star>0$ such that
    $\|g^{k_m}\|\ge\nu_\star$ for all $m$.} Then we have from
  \eqref{f:decrease:rate} in the proof of Theorem~\ref{thm:GS:convergence},
  that each iteration of the algorithm satisfies
  \begin{align*}
    f(x^{k_m})-f(x^{k_m+1})\ge \alpha\beta t_{k_m}\|g^{k_m}\|\ge \alpha\beta 
    \tau_\star\nu_\star>0.
  \end{align*}
  We therefore conclude that $f(x^{k_m})\searrow-\infty$ as $m\to\infty$, which 
  contradicts our assumption that $f$ is lower bounded.
  
  \noindent \textit{Case (ib): There exists a subsequence for which
    $\|g^{k_m}\|\to 0$ (with a slight abuse of notation).} Then, from
  Step 3 of the mGS algorithm, the sampling radius $\eps^{k_m}$
  also decreases to zero, and we deduce by \eqref{gradients:in:callKdelta}
  that \eqref{g:norm:contradiction} holds for all $m$ beyond some $m_0$,
  which contradicts the standing assumption of Case (ib).  
  
  \noindent \textit{Case (ii): The increments $\|x^{k_m}-x^{k_m+1}\|$
  converge to zero}.
  Equivalently, from Step 4 of the mGS algorithm, we have that
  $t^{k_m}\to 0$, and since $t^{k_m}\ge \gamma \eps_{k_m}/3$, also
  $\eps^{k_m}\to 0$. This implies that Step 3 is reached infinitely
  many times and, therefore, $\|g^{k_m}\|\to 0$ for a further
  subsequence (denoted similarly with a slight abuse of notation). As
  above, we deduce again that \eqref{g:norm:contradiction} holds for all $m$ beyond some $m_0$, which contradicts that $\|g^{k_m}\|\to 0$.
  %
  %
  All cases are now covered and the proof is complete.
\end{proof}

\section{Numerical example}
\label{sec:example}

In this section, we illustrate our approach on a locational
optimization problem subject to distributional uncertainty. The goal
is to optimally place a set of agents or resources over a line segment
to minimize an expected cost associated by their locations. To define
it, we first assign, for each given point in the segment, a local cost
given by the distance from the point to the agent closest to it. Then,
we evaluate the collective cost over the segment by averaging the
local costs with a probability density over the segment. This density
is used to model the probability of an event occurring on the segment
and weights the importance of locations.  The segment may represent,
for instance, a river parameterized by its length from which we seek
to collect waste that is distributed according to some probability
density function. The agents in this case represent litter collection
containers, which we optimally seek to place along the river banks to
minimize the cost of disposing the litter in them. Here the local cost
is proportional to the distance from the collected litter to the
closet container.

What we have described above is a one-dimensional instance of the
coverage control problem considered in~\cite{FB-JC-SM:09}, where a
group of agents is optimally deployed over an environment to minimize
a collective location cost. Here we consider the distributionally
robust version of the problem, where the density is uncertain, and we
seek to optimize the worst-case coverage cost over a parameterized set
of plausible densities. This has been formulated in
\cite{DB-JC-SM:23-cdc} using wavelet-based ambiguity sets that capture
a family of candidate density functions~\cite{DB-JC-SM:22-cdc}.

\subsection{Cost for an arbitrary number of agents}
\label{subsec:cost:any:agent:number}

Our general problem is to place $N$ agents on the real line to
maximize the coverage of a probability density on the segment
$I:=[0,K]$, which we assume to have integer coordinates to facilitate
notation. We further assume that the probability density is given by a
histogram that is constant on each integer interval $I_k:=[k-1,k)$ of
$I$, where it takes the respective values
$\theta_1,\ldots,\theta_K\ge 0$. The coverage cost is determined
through a penalty, which, at each point in $I$, is proportional to the
distance to the agent closest to it. For each fixed histogram density
$\rho_{\theta}:=\sum_{k=1}^K\theta_k\bone_{I_k}$ the coverage cost is
given by
\begin{align*}
  F_{\rm cover}(x,\theta):= 
  \int_0^K2\min_{n=1,\ldots,N}|x_n-y|\rho_{\theta}(y)dy, 
\end{align*}
where $x=(x_1,\ldots,x_N)\in\Rat{N}$ are the locations of the agents and
$\theta:=(\theta_1,\ldots,\theta_K)\in\Rat{K}$.
Here we assume that the probability density is uncertain and therefore 
each $\theta_k$ is only known to belong to a positive interval
$[\theta_k^-,\theta_k^+]$. This introduces a family of plausible 
densities that are parameterized by $\theta$ and yields the distributionally 
robust coverage problem
%
%
\begin{align*}
  \min_{x\in\Rat{N}}\max_{\theta\in\Theta}F_{\rm cover}(x,\theta)
  \equiv\min_{x\in\Rat{N}} f_{\rm cover}(x),  
\end{align*} 
where
\begin{align*}
  \Theta:=\big\{(\theta_1,\ldots,\theta_K)\in\Rat{K}\,|\,\theta_k^-\le 
  \theta_k\le\theta_k^+\;\textup{for each}\;k\;{\rm and}\; 
  \theta_1+\cdots+\theta_K=1\big\}. 
\end{align*} 
Namely, we optimize the coverage cost over the worst-case density 
from the parameterization.

We next establish that $F_{\rm cover}$ satisfies
Assumption~\ref{assumptions:for:F}. To this end, let
\begin{align*}
  \bar x_n:=\frac{x_n+x_{n+1}}{2}, \quad n=1,\ldots,\bar N:=N-1,
\end{align*}
%
%
assume without loss of generality that $x_1<\cdots<x_N$, and set
\begin{align*}
  \bar N_k & :=\#\{\bar x_k,k=1,\ldots,\bar N\,|\,\bar x_k\in I_k\} \\
  N_k & := \bar N_k+1.
\end{align*}  
Note that $\bar N_I:=\sum_{k=1}^K\bar N_k\le \bar N$ and each
$\bar N_k$ can also be zero. If we further denote
\begin{align*}
  I_{kn}:=[\alpha_{kn},\beta_{kn}],\quad k=1,\ldots,K,\quad n=1,\ldots,N_k,
\end{align*}  
with
\begin{align*}
  \alpha_{kn}
  & :=
    \begin{cases}
      k-1, & {\rm if}\;n=1, \\
      \bar x_{m_{kn}}, & {\rm otherwise},
    \end{cases}
  \\
  \beta_{kn}
  & := 
    \begin{cases}
      \bar x_{m_{kn}+1}, & {\rm if}\;1\le n\le N_k\;{\rm and}\;N_k\neq 1, \\
      k, &  {\rm otherwise}
    \end{cases}
\end{align*}
and   
\begin{align*}
  m_{kn} & :=m_{\min}+\sum_{i=1}^{k-1}\bar N_i+n-1,\quad
           k=1,\ldots,K-1,\quad n=1,\ldots,\bar
           N_k\qquad\Big(\sum_{i=1}^0\equiv 0\Big) \\
  m_{\min}
  &  :=\begin{cases}
          \min\{n=1,\ldots,\bar N\,|\,\bar x_n\ge 0\}, & {\rm if}\; \bar x_N\ge 0 \\
          1,  & {\rm otherwise}
       \end{cases}            
\end{align*}
it follows that 
\begin{align*}
  F_{\rm cover}(x,\theta)=\sum_{k=1}^K\sum_{n=1}^{N_k}
  \int_{\alpha_{kn}}^{\beta_{kn}}2|x_{m_{kn}+1}-y|\theta_kdy
  \equiv\sum_{k=1}^K\theta_k\sum_{n=1}^{N_k}h_{kn}(x),
\end{align*}
where   
\begin{align*}
  h_{kn}(x) 
  := & \begin{cases}
         (\beta_{kn}-x_{m_{kn}+1})^2-(\alpha_{kn}-x_{m_{kn}+1})^2, 
         & {\rm if}\;x_{m_{kn}+1}\le\alpha_{kn}, \\
         (\beta_{kn}-x_{m_{kn}+1})^2+(x_{m_{kn}+1}-\alpha_{kn})^2 , 
         & {\rm if}\;\alpha_{kn}<x_{m_{kn}+1}<\beta_{kn}, \\
         (x_{m_{kn}+1}-\alpha_{kn})^2-(x_{m_{kn}+1}-\beta_{kn})^2 , 
         & {\rm if}\;x_{m_{kn}+1}\ge\beta_{kn}.
       \end{cases}
\end{align*}  

We can obtain the exact same formula for $F_{\rm cover}$ for any other
agent configuration with distinct positions, by just permuting the
indices of the agents to match the order of their
positions. Introducing further the constraints
\begin{subequations}\label{coverage:domain:constraints}
  \begin{align}
    x_i & \ne x_j, &  i,j & \in[1:N],\quad i\ne j, \\
    x_i & \ne k, & i & \in[1:N],\quad k\in [0,K], \\ 
    x_i-x_{i+1} & \ne 2k, & i & \in[1:\bar N],\quad k\in [0,K],
  \end{align}  
\end{subequations}
and denoting as $\D$ the set of vectors $x\in\Rat{N}$ that satisfy
\eqref{coverage:domain:constraints}, it follows that this set is open
and has full measure, since each constraint in
\eqref{coverage:domain:constraints} defines a hyperplane in
$\Rat{N}$. One can then readily check that $F_{\rm cover}$ satisfies
Assumption~\ref{assumptions:for:F}, since it is determined by a smooth
closed-form expression at each point $x\in\D$. This enables us to
solve the robust coverage optimization problem using the mGS
algorithm, which is guaranteed to converge by
Theorem~\ref{thm:GS:convergence}.

\subsection{Explicit cost characterization for two agents}

Here we deal with the particular case of two agents, with positions
$x_1$ and $x_2$, for which we can obtain a closed-form expression for
$f_{\rm cover}$ and explicitly characterize its points of
discontinuity. For simplicity we consider the domain $[0,4]$ with a
histogram density equal to $\theta_1$ on $[0,2]$ and $\theta_2$ on
$[2,4]$, respectively. We then have
\begin{align}
  f_{\rm cover}(x) & =\max_{\theta\in\Theta}F_{\rm cover}(x,\theta)=
                     \max_{(\theta_1,\theta_2)\in\Theta}
                     (\theta_1 p_1(x) +\theta_2 p_2(x)) \nonumber \\ 
                   & =\max_{\theta_1\in[\max\{\theta_1^-,0.5-\theta_2^+\},\min\{\theta_1^+,0.5-\theta_2^-\}]}
                     \{(\theta_1 p_1(x)+(0.5-\theta_1)p_2(x)\} \nonumber \\
                   & =\begin{cases}
                        \max\{\theta_1^-,0.5-\theta_2^+\}p_1(x)
                        +(0.5-\max\{\theta_1^-,0.5-\theta_2^+\})p_2(x), 
                        & {\rm if}\; p_1(x)\le p_2(x) \\
                        \min\{\theta_1^+,0.5-\theta_2^-\} p_1(x)
                        +(0.5-\min\{\theta_1^+,0.5-\theta_2^-\})p_2(x),
                        & {\rm otherwise}
\end{cases} \label{explicit:cost}
\end{align} 
for all $x=(x_1,x_2)$ with $x_1\in[0,2]$ and $x_2\in[2,4]$
(cf. Figure~\ref{fig:domain:two:agent}). We provide the detailed derivation of this expression in \cite{DB-JC-SM:25-arXiv}.
\begin{figure}
  \begin{center}
  	\centering
  \subfigure[]{\includegraphics[width=.45\linewidth]{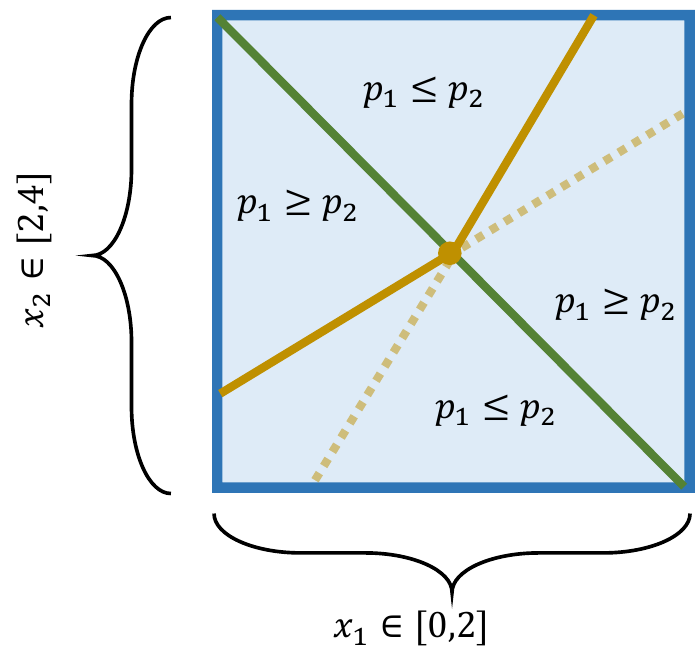}}
  \hspace{3em}
  \subfigure[]{\includegraphics[width=.45\linewidth]{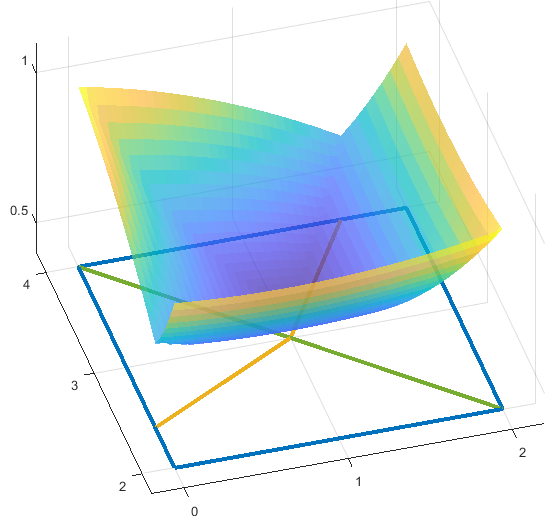}} 
  \caption{(a) The part of the two-agent domain for which we obtained
    an explicit formula of the coverage cost and the lines where this
    cost may be discontinuous, given by $p_1(x)=p_2(x)$. The green
    line $x_1+x_2-4=0$, on which $p_1(x)-p_2(x)=0$, distinguishes
    whether we are in Case (i) (below the line) or Case (ii) (above
    the line), respectively, whereas the yellow lines determine the
    additional zeros of $p_1(x)-p_2(x)$ for each respective case
    (plotted dashed in its complement). (b) shows a plot of this cost
    for the histogram value bounds $\theta_1^-=\theta_2^-=0$ and
    $\theta_1^+=\theta_2^+=0.45$.  }\label{fig:domain:two:agent}
  \end{center} 
\end{figure}
%
%

\subsection{Cost modification for attractivity}

We next modify the cost function to also ensure that the position of
the agents will converge to $\mathcal K:=\prod_{n=1}^N\mathcal K_n$,
with $\mathcal K_n:=I\equiv[0,K]$. To this end, we consider the
penalty term
\begin{align}\label{penalty:term}
  F_{\rm penalty}(x):=\sum_{n=1}^N\max\{0,-x_n,x_n-K\}
\end{align}   
and exploit Remark~\ref{rem:additive:cost}, which ensures that the mGS
algorithm still converges for the modified cost
$F:=F_{\rm cover}+F_{\rm penalty}$.  In particular, since
$F_{\rm penalty}$ is smooth on all of $\Rat{N}$ except for the
hyperplanes $x_n=0$, $x_n=K$, $n=1,\ldots,N$, it is continuously
differentiable on an open set of full measure, which establishes the
regularity requirement in Remark~\ref{rem:additive:cost}. It is also
not hard to check that
\begin{align*}
  \nabla_{x_n}F(x,\theta)= \;
  & -\bone_{\{x_n<0\}}(x)
    +\bone_{\{m_{\min}\}}(n)\bone_{\{x_n<0\}}(x)\sum_{k=1}^{\lceil \bar 
    x_{m_{\min}}\rceil}(\alpha_{k1}-\beta_{k1})\theta_k
  \\
  & + \bone_{\{m_{\max}\}}(n)\bone_{\{x_n>K\}}(x) \\
  & \quad\times\Big(\bone_{\{\bar x_{m_{\max}}\ne\lfloor\bar x_{m_{\max}}\rfloor\}}(x)
  (\beta_{\lfloor\bar x_{m_{\max}}\rfloor N_{{\lfloor\bar x_{m_{\max}}\rfloor}}}
    -\alpha_{\lfloor\bar x_{m_{\max}}\rfloor N_{{\lfloor\bar 
     x_{m_{\max}}\rfloor}}})\theta_{\lfloor\bar x_{m_{\max}}\rfloor}
  \\
  &\hspace{12em} +\sum_{k=\lceil\bar 
    x_{m_{\max}}\rceil}^{K}(\beta_{k1}-\alpha_{k1})\theta_k\Big)+\bone_{\{x_n>K\}}(x), 
\end{align*} 
where 
\begin{align*}
  m_{\max}
  &  :=\begin{cases}
         \max\{n=1,\ldots,\bar N\,|\,\bar x_n\le K\}, & {\rm if}\; \bar x_0\le K \\
         \bar N,  & {\rm otherwise},
       \end{cases}
\end{align*}
and $m_{\min}$ is introduced in Section~\ref{subsec:cost:any:agent:number}. 
Here $\bone_A$ denotes the indicator function of a set $A\subset X$,
which is one on $A$ and zero on $X\setminus A$, and is used with
$X\equiv\Rat{N}$ and $X\equiv\{1,\ldots,N\}$. To provide some intuition
behind this expression, note that when for instance $x_n<0$, the
gradient is only determined by the first line. This means that it is
simply equal to minus one, due the corresponding gradient of $-x_n$ in
$F_{\rm penalty}$, plus a positive contribution of the gradient of the
coverage cost in case the midpoint between $x_n$ and $x_{n+1}$ lies
inside the support $[0,K]$ of the distribution. It follows that
$\nabla_{x_n} F(x,\theta)<0$ when $x_n<0$ and
$\nabla_{x_n} F(x,\theta)>0$ when $x_n>k$, which implies that always
\begin{align*}
  \nabla_{x_n} F(x,\theta)\in\cone{x_n-[0,K]}=\cone{[x_n-K,x_n]}=
  \begin{cases}
    \RleO, & {\rm if}\; x_n<0, \\
    \Rat{},  & {\rm if}\; 0<x_n<K, \\
    \RgeO, & {\rm if}\; K<x_n.
  \end{cases}    
\end{align*}
Thus, Assumption~\ref{assumption:gradient:cone} is fulfilled and we
deduce from Theorem~\ref{thm:invariance} that $[0,K]^N$ is an
attractive set for the iterates of the algorithm, even when the agents
start outside it.

\subsection{Simulation results}

We validate the mGS algorithm on two simulation cases. In the first,
we optimize the two-agent coverage cost given by the explicit
formula~\eqref{explicit:cost} for agent positions with initial
condition in the square $[0,2]\times[2,4]$. This square covers all the
agent configurations on $[0,4]$, where the first agent is placed in
the first half of the interval and the second agent is placed in the
second half of it.  The sequence generated by the mGS algorithm
converges to the center of the square where the cost is minimized, as
shown in Figure~\ref{fig:two:agent:run}(a). Note that this corresponds
to the first agent reaching $1\in [0,4] $ and the second agent
reaching $3 \in [0,4]$. It is clear from the plot that, after the
sequence reaches the line of discontinuity
$\frac{5}{2} x_1-\frac{3}{2}x_2-8=0$, the descent directions selected
by the algorithm are closely aligned with it and the sequence follows
this line. On the other hand, a simple gradient descent using the same
step-size bounds and line-search parameters as the mGS algorithm fails
to converge (cf. Figure~\ref{fig:two:agent:run}(b)). This happens
because, after a certain number of iterations, the sequence reaches
the close vicinity of the line of discontinuity and cannot find a
lower function value along the negated gradient direction while
respecting the step-size limits of the algorithm.

We also optimize the coverage cost for an example with five agents and
a density that is supported on the interval $[0,6]$ with non-identical
upper and lower bounds $\theta_k^-$, $\theta_k^+$, $k=1,\ldots,6$. The
initial positions of the outmost agents are located outside this
interval. By adding the penalty term \eqref{penalty:term}, the
sequence generated by the mGS algorithm converges to optimal locations
that are strictly inside the support of the uncertain density, as
shown in Figure~\ref{fig:five:agent:run}.

\begin{figure}[htb]
  \centering
  \subfigure[]{\includegraphics[width=.35\linewidth]{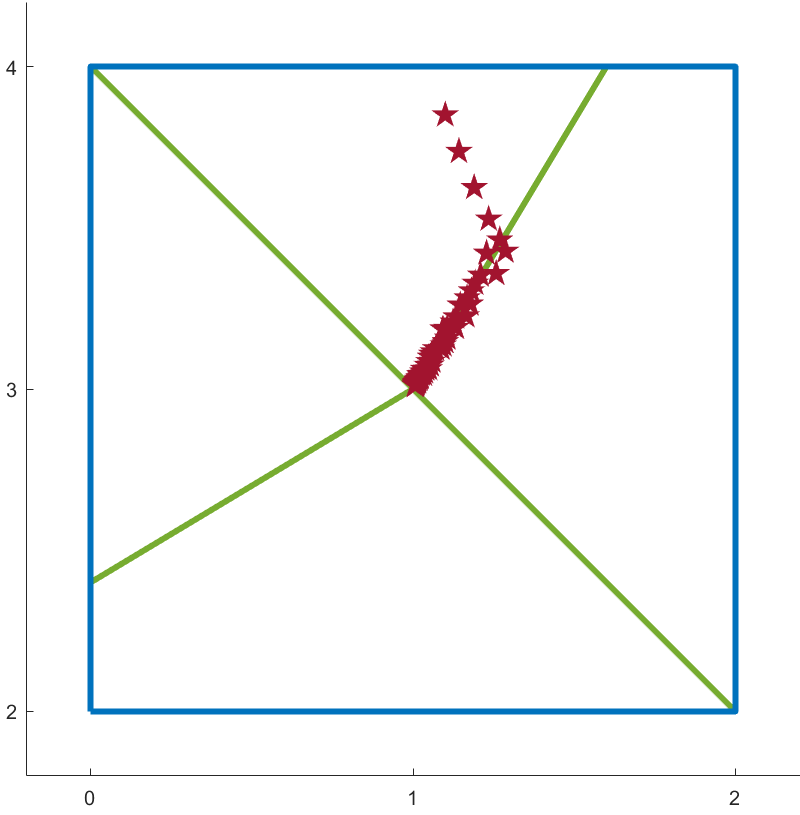}}
  \hspace{5em}
  \subfigure[]{\includegraphics[width=.35\linewidth]{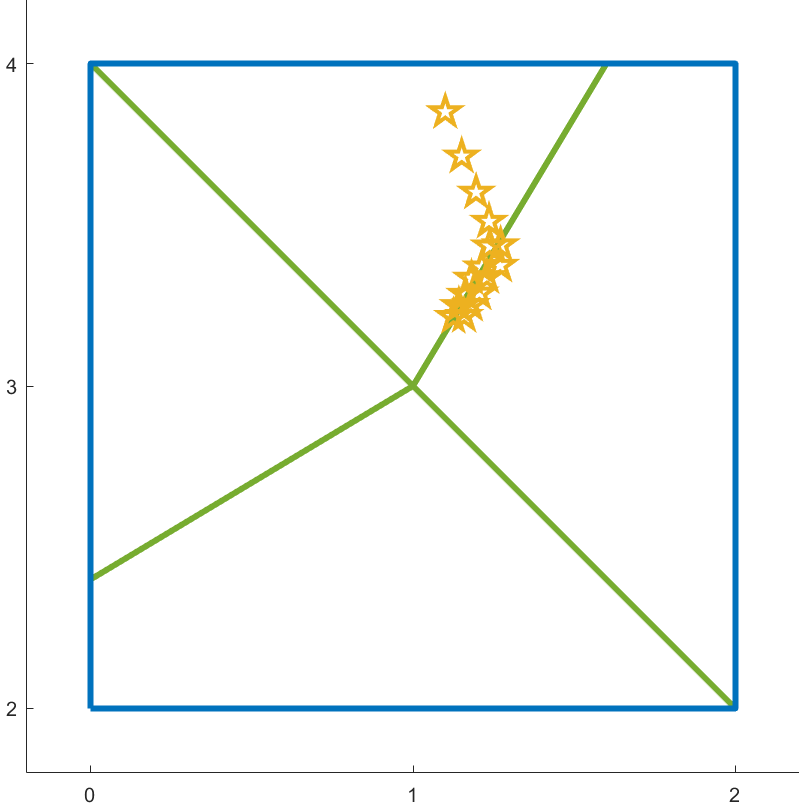}} 
  \caption{(a) shows the agent position sequence generated by the mGS
    algorithm, which converges to the minimum of the cost function (b)
    shows the same sequence generated by a simple gradient descent,
    which fails to find a descent direction and proceed further after
    a certain number of iterations.}
  \label{fig:two:agent:run}
\end{figure}

\begin{figure}[htb]
  \centering
  \includegraphics[width=.8\linewidth]{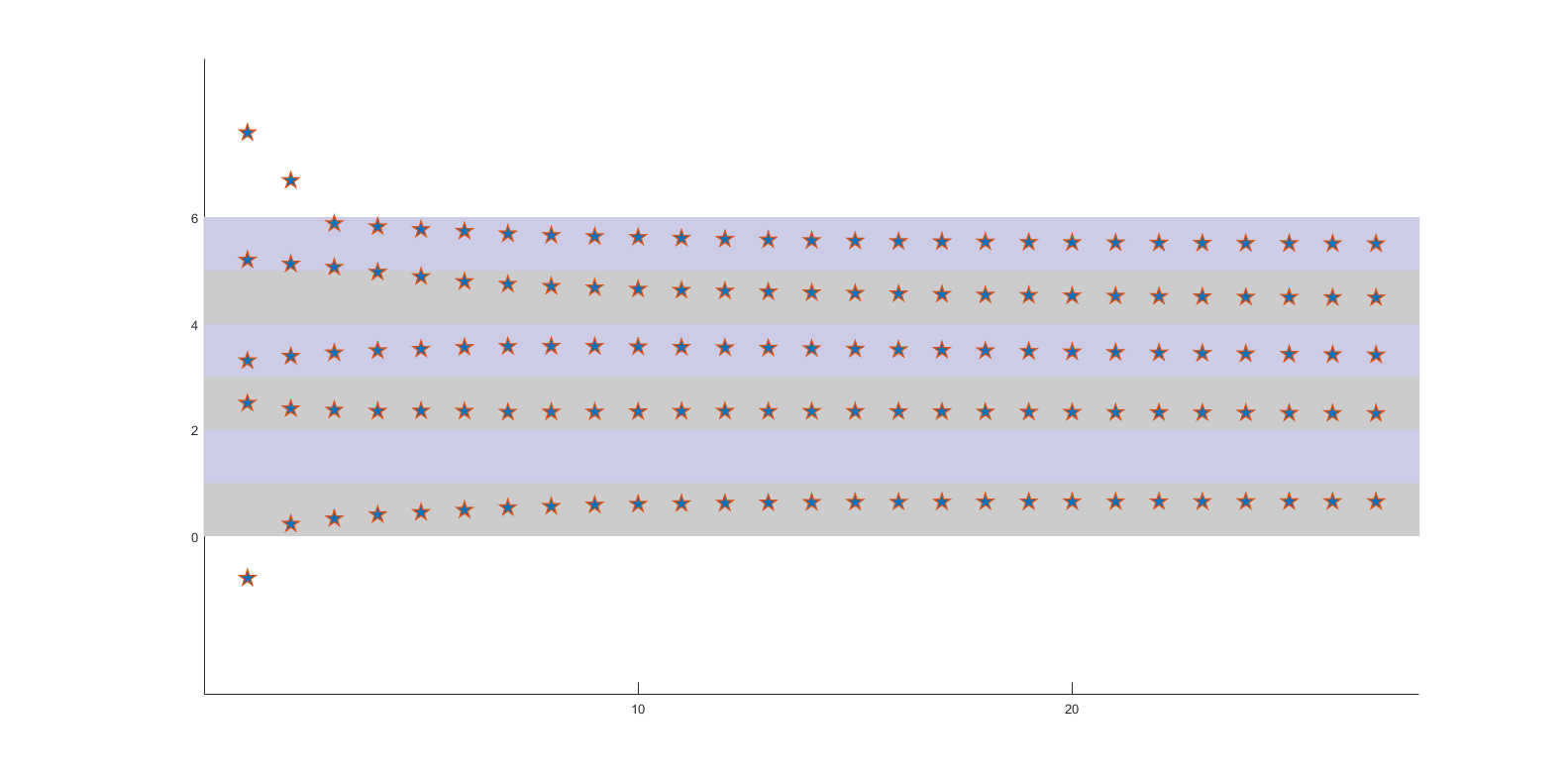}
  \caption{The plot shows the position sequence of five agents
    generated by the mGS algorithm. Here, the upper and lower bounds
    $\theta_k^-$, $\theta_k^+$, $k=1,\ldots,6$ are different. All
    agents are aligned vertically at each iteration and the two
    outmost ones lie initially outside the support $[0,6]$ of the
    uncertain density. The penalty term $F_{\rm penalty}$ ensures that
    the agents converge to an optimal worst-case configuration that
    lies inside the interval $[0,6]$.}
\label{fig:five:agent:run}
\end{figure}
%

\section{Conclusions}

Motivated by non-convex, non-smooth constrained optimization problems
where neither the values nor the gradients of the objective function
can be provided in closed form, this paper proposes a variant of the
GS algorithm which can handle objective functions that are implicitly
defined through the solution of an inner optimization problem. Our
contributions include a rigorous convergence proof of the algorithm
under assumptions that no longer require continuous differentiability
of the objective function on an open set of full measure.  We further
provide invariance guarantees for the proposed algorithm to provably
restrict its convergence over convex domains.  Future research
directions include the study of convergence for more general bilevel
programs and local sampling schemes for problems with a distributed
structure. We also aim to investigate adjustments that can help
establish finite-time convergence with prescribed probability, when
this is an acceptable alternative to almost sure convergence.

\section*{Appendix}
\renewcommand{\thesection}{A}

\subsection{Example of a lower-$\mathcal C^2$ function lacking
  continuous differentiability on a set of positive measure}

Here we provide an example to demonstrate that lower-$\mathcal C^2$
functions that are not continuously differentiable on sets of a
positive measure might arise naturally and
in fact can be obtained by taking the supremum of a nice family of
functions. To this end, we first construct a Cantor-like set by the
following recursive procedure. Let $I_0^1\equiv A_0:=[0,1]$ and
consider the midpoint $x_0^1\equiv\frac{1}{2}$ of $I_0^1$. Define
\begin{align*}
  \Delta_0^1\equiv B_0:=\Big(x_0^1-\frac{1}{2^{2+1}},x_0^1+\frac{1}{2^{2+1}}\Big)
  =\Big(\frac{1}{2}-\frac{1}{8},\frac{1}{2}+\frac{1}{8}\Big)   
\end{align*}
and the set $A_1:=A_0\setminus B_0$. Then $A_1$ consists of two
intervals $I_1^1$ and $I_1^2$ of equal length
$\frac{1}{2}(1-\frac{1}{4})\equiv\frac{1}{2}(\frac{1}{2}+\frac{1}{4})$ each. 
Next, pick the midpoints $x_1^1$
and $x_1^2$ of $I_1^1$ and $I_1^2$ and remove from them the intervals
$\Delta_1^m=\Big[x_1^m-\frac{1}{2^{4+1}},x_1^m+\frac{1}{2^{4+1}}\Big]$,
$m=1,2$, which have total length $\frac{1}{8}$. Then define the set
$B_1:=I_1^1\cup I_1^2$ and $A_2:=A_1\setminus B_1$. Proceeding
inductively, we obtain at each iteration $k$ a set of disjoint
intervals $I_k^m$, $m\in[2^k]$, of length
$\frac{1}{2^k}\Big(\frac{1}{2}+\frac{1}{2^{k+1}}\Big)$ each, which
comprise the set $A_k$. At the next iteration, we remove from each
$I_k^m$ the open interval $\Delta_k^m$, which has length
$\frac{1}{2^{2(k+1)}}$ and is centered at the midpoint $x_k^m$ of
$I_k^m$.
%
We therefore obtain a sequence of sets $A_k$ and $B_k$,
$k=1,2,\ldots$, with
\begin{align*}
  A_k & =\cup_{m=1}^{2^k} I_k^m,\hspace{2.8em} B_k =\cup_{m=1}^{2^k}
        \Delta _k^m
  \\
  A_{k+1} & =A_k\setminus B_k,\hspace{3.2em}A_0 \supset A_1\supset
            A_2\supset\cdots
  \\ 
  \lambda(A_k)
      & =\frac{1}{2}+\frac{1}{2^{k+1}},\quad
        \lambda(B_k)=\frac{1}{2^{k+2}}, 
        \quad\lambda(\Delta_k^m) =
        \frac{1}{2^{2(k+1)}}\equiv\frac{1}{2^k}\frac{1}{2^{k+2}} 
  \\
  \Delta_k^m
      & \subset
        I_k^m,\hspace{4.2em}\frac{1}{2^{k+1}}<\lambda(I_k^m)<\frac{1}{2^k}, 
\end{align*}
%
where $\lambda$ denotes the Lebesgue measure on $\Rat{}$.  It follows
by the monotonicity of the sequence $\{A_k\}$ that
$A:=\cap_{k=1}^\infty A_k$ has measure $\lambda(A)=\frac{1}{2}$. In
addition, any $x\in A$ satisfies $x\in A_k$ for all $k$. Therefore,
for every $\delta>0$, there exist indices $k$ and $m\in[2^k]$ such
that $\Delta_k^m\subset (x-\delta,x+\delta)$ by the last line of the
above properties.
%
%
Consider now a smooth function $\varphi:[-1,1]\to [-1,1]$
such that
\begin{align*}
  \supp{\varphi}\subset(-1,1),
  \quad\varphi(0)=0,\quad\varphi'(0)\ne 0.
\end{align*}
Let also $C:=\max_{x\in [-1,1]}\max\{|\varphi'(x)|,|\varphi''(x)|\}$,  
$\delta_k:=\frac{1}{2^{2(k+1)+1}}$, $\eps_k:=\frac{1}{k}\frac{\delta_k^2}{C}$, 
$k=1,2,\ldots$, and consider the real-valued smooth functions $g_k$ on $[0,1]$ 
given by  
\begin{align*}
  g_k(x):=
  \begin{cases}
    \eps_k\varphi\Big(\frac{x-x_k^m}{\delta_k}\Big),
    & {\rm
      if}\;
      x\in
      \Delta_k^m,m\in[2^k]
    \\
    0, & {\rm otherwise}
  \end{cases} 
\end{align*}
for all integers $k$. Then one can readily check that
\begin{align} \label{derivative:bounds}
  \max_{x\in[0,1]}\max\{|g_k(x)|,|g_k'(x)|,
  |g_k''(x)|\}\le\frac{1}{k}.
\end{align}
Let $f_t:[0,1]\to[0,1]$ be a family of functions that is parameterized
by $t\in[0,1]$ and associates to each $t$ between $\frac{1}{k}$ and
$\frac{1}{k+1}$ a convex combination among $g_k$ and $g_{k+1}$,
namely,
\begin{align*}
  f_t(x):=\frac{t-\frac{1}{k+1}}{k(k+1)}g_k(x)
  +\frac{\frac{1}{k}-t}{k(k+1)}g_{k+1}(x), \quad x\in[0,1],\;
  t\in\Big[\frac{1}{k+1},\frac{1}{k}\Big],\;k\in\mathbb N  
\end{align*}
We also require $f_t$
%
%
to be identically zero for $t=0$. Then $f:=\sup_{t\in[0,1]}f_t$ is
lower-$\mathcal C^2$ because its first and second derivatives with
respect to $x$ are jointly continuous in $(t,x)$. The critical part of
this assertion is establishing it for $t=0$, which follows readily
from \eqref{derivative:bounds}.  We also claim that $f$ lacks
continuous differentiability for all $x\in A$, which is a set of
positive measure. To see why this holds, it suffices to show that for
any $x\in A$ and $\delta>0$, there exists a point that is
$\delta$-close to $x$ at which the derivative of $f$ fails to
exist. Let $x\in A$ and $\delta>0$. We established above that there
exists an interval $\Delta_k^m$ such that
$\Delta_k^m\subset(x-\delta,x+\delta)$. It follows from the definition
of $f$ that $f(x)=\max\{0,g_k(x)\}$ for all $x\in\Delta_k^m$.
Therefore, since $\varphi(0)=0$ and $\varphi'(0)\ne 0$, $f$ is not
differentiable at the midpoint of $\Delta_k^m$, which concludes our
claim.

\subsection{Proofs from Section~\ref{sec:GS:convergence}}

Here, we provide the proof of
Proposition~\ref{prop:U:properties:and:tk:bound}
and~\ref{lemma:convergence} from Section~\ref{sec:GS:convergence}.
To this end, we will exploit the following properties relating the Clarke
generalized gradient, the Goldstein $\eps$-subdifferential, and the approximation $G_\eps$ of the latter given by \eqref{G:epsilon}.

\begin{lemma}\longthmtitle{Generalized gradient
    characterizations}\label{lemma:Clarke:gradient:properties}
  Let $f:\Rat{n}\to\Rat{}$ be locally Lipschitz. 
  Then the following hold:
  
  \noindent (i) The Clarke generalized gradient of $f$ at $x$ is
  equivalently given by
  \begin{align*}
    \bar\partial f(x)=\cap_{\eps>0}G_\eps(x), 
  \end{align*}
  where $G_\eps(x)$ is given by \eqref{G:epsilon};
  
  \noindent (ii) For each $\rho>0$, there exists $\eps>0$ such that
  \begin{align*}
    G_\eps(x)\subset B(\bar\partial f(x),\rho);
  \end{align*}      
  
  \noindent (iii) For any $\eps>0$, we have the inclusion 
  \begin{align*}
    G_\eps(x)\supset\cup_{x'\in\interior{B(x,\eps)}}\bar\partial f(x'); 
  \end{align*}
  %
 \noindent (iv) In fact, for any $\eps>0$, we have the stronger result 
 \begin{align*}
  G_\eps(x)=\mathring G_\eps(x):=\cl(\conv(\nabla
    f(\interior{B(x,\eps)}\cap\mathcal D_f))).
 \end{align*}
 In particular, 
  \begin{align} \label{Geps:circle:equivalent}
  \mathring G_\eps(x)=\cl(\conv(\cup_{x'\in\interior{B(x,\eps)}}\bar\partial f(x'))) 
  \end{align}
 and both $G_\eps(x)$ and $\mathring G_\eps(x)$ are independent of their defining full measure set $\D_f$ on which $f$ is differentiable.  
\end{lemma}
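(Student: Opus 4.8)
The plan is to establish the four items in order, reducing (i)--(iii) to elementary properties of support functions and isolating the one genuinely delicate point, the behaviour on the bounding sphere, in (iv). Throughout, write $\sigma_A(d):=\sup_{a\in A}\langle a,d\rangle$ for the support function of $A$, and note that local Lipschitzness of $f$ makes $\nabla f$ bounded on $B(x,\eps)\cap\mathcal D_f$, so each $G_\eps(x)$ is nonempty, compact and convex, and $\{G_\eps(x)\}_{\eps>0}$ is nested decreasing in $\eps$. For (i), the inclusion $\bar\partial f(x)\subseteq G_\eps(x)$ for every $\eps>0$ is immediate: any generating limit $\lim_k\nabla f(x_k)$ of $\bar\partial f(x)$ has $x_k\in B(x,\eps)\cap\mathcal D_f$ eventually, hence lies in $\cl(\nabla f(B(x,\eps)\cap\mathcal D_f))$, and convex combinations of such limits stay in the closed convex set $G_\eps(x)$; thus $\bar\partial f(x)\subseteq\bigcap_{\eps>0}G_\eps(x)$. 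For the reverse inclusion I would compare support functions: since closure and convexification leave support functions unchanged, $\sigma_{G_\eps(x)}(d)=\sup\{\langle\nabla f(z),d\rangle\mid z\in B(x,\eps)\cap\mathcal D_f\}$, which decreases as $\eps\downarrow0$ to $\limsup_{z\to x,\,z\in\mathcal D_f}\langle\nabla f(z),d\rangle=\sigma_{\bar\partial f(x)}(d)$, the standard support-function description of $\bar\partial f$. Since the support function of a decreasing intersection of compact convex sets is the pointwise limit of the support functions, $\bigcap_{\eps>0}G_\eps(x)$ and $\bar\partial f(x)$ have the same support function and hence coincide.

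Item (ii) follows from (i) by compactness: if $G_{1/k}(x)\not\subseteq B(\bar\partial f(x),\rho)$ for all $k$, choose $v_k\in G_{1/k}(x)$ with $\dist(v_k,\bar\partial f(x))>\rho$; the $v_k$ lie in the compact set $G_1(x)$, a convergent subsequence has a limit $v$ that belongs to every $G_\eps(x)$ and hence to $\bar\partial f(x)$, contradicting $\dist(v,\bar\partial f(x))\ge\rho$. Item (iii) is equally short: given $x'\in\interior{B(x,\eps)}$, pick $\eps'>0$ with $B(x',\eps')\subseteq\interior{B(x,\eps)}$; every generating limit of $\bar\partial f(x')$ eventually uses points of $B(x',\eps')\cap\mathcal D_f\subseteq B(x,\eps)\cap\mathcal D_f$ and therefore lies in $G_\eps(x)$, so $\bar\partial f(x')\subseteq G_\eps(x)$ after taking convex hulls inside $G_\eps(x)$; taking the union over $x'$ gives the claim.

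The substance is in (iv). The inclusion $\mathring G_\eps(x)\subseteq G_\eps(x)$ is trivial, and for $G_\eps(x)\subseteq\mathring G_\eps(x)$ it suffices to show $\nabla f(y)\in\mathring G_\eps(x)$ for each $y\in B(x,\eps)\cap\mathcal D_f$. This is immediate when $\|y-x\|<\eps$, so suppose $\|y-x\|=\eps$ and assume, for contradiction, $\nabla f(y)\notin\mathring G_\eps(x)$. Since $\mathring G_\eps(x)$ is nonempty, closed and convex, a separating hyperplane yields $d$ with $\|d\|=1$ and $c\in\Rat{}$ such that $\langle\nabla f(z),d\rangle\le c$ for all $z\in\interior{B(x,\eps)}\cap\mathcal D_f$ while $\langle\nabla f(y),d\rangle>c$. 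I would then mollify, $f_\delta:=f*\phi_\delta$ with $\phi_\delta$ a standard mollifier supported in $B(0,\delta)$; local Lipschitzness gives $\nabla f_\delta=\nabla f*\phi_\delta$, so $\langle\nabla f_\delta(z),d\rangle\le c$ for every $z$ lying inside $B(x,\eps)$ at distance more than $\delta$ from $\S^{n-1}(x,\eps)$. Integrating this bound along the segment $s\mapsto y_\rho+sd$, $|s|\le\rho/4$, with $y_\rho:=y+\rho(x-y)/\eps$ (which for small $\rho$ stays inside $B(x,\eps)$ at distance of order $\rho$ from the sphere, so the bound applies once $\delta<3\rho/4$), then sending $\delta\downarrow0$ and afterwards $\rho\downarrow0$, differentiability of $f$ at $y$ forces $\langle\nabla f(y),d\rangle\le c$, the desired contradiction.

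With $G_\eps(x)=\mathring G_\eps(x)$ established, the identity $\mathring G_\eps(x)=\cl(\conv(\bigcup_{x'\in\interior{B(x,\eps)}}\bar\partial f(x')))$ follows readily: ``$\supseteq$'' is part (iii) combined with $G_\eps=\mathring G_\eps$, and ``$\subseteq$'' holds because $\nabla f(y)\in\bar\partial f(y)$ for every $y\in\interior{B(x,\eps)}\cap\mathcal D_f$, so the generating set of $\mathring G_\eps(x)$ is contained in $\bigcup_{x'\in\interior{B(x,\eps)}}\bar\partial f(x')$. Independence of both sets from the defining full-measure differentiability set is then read off from this last representation, since $\bar\partial f$ is itself independent of that set by its very definition (see also Proposition~\ref{prop:Clarkegradiens:sv:subgradient}). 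The main obstacle is precisely the boundary case in (iv): because $\nabla f$ may be discontinuous, for $y\in\S^{n-1}(x,\eps)$ the vector $\nabla f(y)$ need not arise as a limit of gradients at interior points, and the mollification-plus-separation argument is exactly what shows that it nonetheless lies in their closed convex hull.
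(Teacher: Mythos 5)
Your proposal is correct, and while parts (i)--(iii) follow essentially the same lines as the paper (your (ii) is verbatim the paper's compactness/contradiction argument, and your (iii) is a slightly cleaner version of its Caratheodory-based approximation), the crucial boundary step of (iv) is handled by a genuinely different mechanism. Both proofs reduce to the same contradiction setup: separate $\nabla f(y)$, for $y\in\S^{n-1}(x,\eps)\cap\mathcal D_f$, from the closed convex set $\mathring G_\eps(x)$ and then show the gradient bound at interior points forces $\langle \nabla f(y),d\rangle\le c$. The paper transfers the bound to the boundary via Lebourg's nonsmooth mean value theorem applied along a direction $\widehat\eta$ tilted slightly inward from the separating normal, which requires a quantitative robustness estimate (the $\cos\varphi$ condition in \eqref{gradient:difference:along:eta:hat}) showing the separation survives small perturbations of the direction, together with part (iii) to place the mean-value subgradient inside $\mathring G_\eps(x)$. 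You instead mollify, use $\nabla f_\delta=\nabla f\ast\phi_\delta$ to propagate the linear bound $\langle\cdot,d\rangle\le c$ to the smoothed gradient on the slightly shrunken ball, integrate along the \emph{exact} separating direction $d$ from the inward-shifted point $y_\rho$, and pass to the limit $\delta\downarrow 0$, $\rho\downarrow 0$ using only Fr\'echet differentiability of $f$ at $y$. Your route avoids both Lebourg's theorem and the perturbed-direction estimate at the cost of invoking standard facts about convolution of Lipschitz functions; the paper's route stays entirely within nonsmooth calculus. Both arguments are sound, and your observation that the two-sided segment $y_\rho+sd$, $|s|\le\rho/4$, stays at distance at least $3\rho/4$ from the sphere (so the mollified bound applies for $\delta<3\rho/4$) is exactly the point that makes the integration legitimate. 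Your derivation of \eqref{Geps:circle:equivalent} and of the independence from $\mathcal D_f$ coincides with the paper's.
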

\begin{proof}
  Property (i) follows directly from the definitions of
  $\bar\partial f(x)$ and~$G_\eps(x)$.
  %
  %

  To prove (ii), assume by contradiction that there exists $\rho>0$
  such that, for each integer $n$, there is $u_n\in G_{1/n}(x)$ with
  $\dist(u_n,\bar\partial f(x))>\rho$. Then, by the boundedness of
  $G_\eps(x)$ a subsequence of $u_{n_k}$ converges to a point
  $v$. Since each $G_{1/n}(x)$ is closed and eventually contains the
  subsequence, $v\in G_{1/{n_k}}(x)$ for all $n_k$, and hence also
  $v\in G_\eps(x)$ for all $\eps>0$, by monotonicity of $G_\eps(x)$
  with respect to $\eps$. Thus, from part (i),
  $v\in \bar\partial f(x)$, which contradicts
  $\dist(v,\bar \partial f(x))\ge\rho$.

  To prove (iii), it suffices to show by the definition of $G_\eps(x)$
  that, for each $x'\in\interior{B(x,\eps)}$,
  $u\in\bar\partial f(x')$, and $\delta>0$, there exists
  $v\in\conv(\nabla f(B(x,\eps)\cap\mathcal D_f))$ with
  $\|u-v\|<\delta$. Consider any such triple of $x'$, $y$ and
  $\delta$, and note that by Caratheodory's theorem~\cite[Theorem
  2.29]{RTR-RJBW:98}
  %
  %
  \begin{align*}
  u = \lim_k\sum_{i=1}^{n+1}\lambda_k^iu_k^i \quad {\rm and} \quad
  u_k^i=\nabla f(x_k^i) 
  \end{align*}
  for some positive $\lambda_k^i$ with $\sum_{i=1}^{n+1}\lambda_k^i$ for each $k$. In particular, we have
  \begin{align*}
    u = \sum_{i=1}^{n+1}\lambda_iu_i \quad {\rm and} \quad
    u_i=\lim_k\nabla f(x_k^i) 
  \end{align*}
  for some positive $\lambda_i$ with $\sum_{i=1}^{n+1}\lambda_i=1$ and
  converging sequences $\{x_k^i\}\subset B(x,\eps)$. We deduce this by passing into converging subsequences for both $\{x_k^i\}$ and  $\{\lambda_k^i\}$ and making a slight abuse of notation. Next, we select an index $k$ such that $\|u_i-\nabla f(x_k^i)\|<\delta$ for all $i=1,\ldots,n+1$
  %
  %
  and set
  $v:=\sum_{i=1}^{n+1}\lambda_i\nabla f(x_k^i)$. Then we have  
  $v\in\conv(\nabla f(B(x,\eps)\cap\mathcal D_f))$ and we get that 
  \begin{align*}
    \|u-v\|\le\sum_{i=1}^n\lambda_i\|y-\nabla f(x_k^i)\|<\delta,
  \end{align*}
  which establishes the claim.   
  
  We finally prove (iv). First we show that for every $y\in\bdry(B(x,\eps))$ for which $\nabla f(y)$ is well defined, necessarily  
  $\nabla f(y)\in\mathring G_\eps(x)$. To this end, consider such a $y\in\bdry(B(x,\eps))$ and assume on the contrary that $\nabla f(y)\notin\mathring G_\eps(x)$. 
  Since $\mathring G_\eps(x)$ is compact and convex by the local Lipschitzness of $f$, we have  
  \begin{align*}
  \dist(\nabla f(y),\mathring G_\eps(x))>0,
  \end{align*}
  and we set $\delta:=\frac{1}{2}\dist(\nabla f(y),\mathring G_\eps(x))$ and
  \begin{align*}
  \eta:=\frac{\nabla f(y)-{\rm proj}_{\mathring G_\eps(x)}(\nabla f(y))}{\|\nabla f(y)-{\rm proj}_{\mathring G_\eps(x)}(\nabla f(y))\|}.
  \end{align*}
  It follows that 
  \begin{align*}
  \langle \eta,\nabla f(y)-u\rangle\ge 2\delta\;\textup{for all}\;u\in \mathring G_\eps(x),
  \end{align*}
  and, using boundedness of $\mathring G_\eps(x)$, it is not hard to show that there exists $\varphi\in(0,1)$ such that  
  \begin{align} \label{gradient:difference:along:eta:hat}
  \langle \eta,\nabla f(y)-u\rangle\ge \delta\;\textup{for all}\;u\in\mathring G_\eps(x)\;{\rm and}\;\widehat\eta\in\S^{n-1}(0,1)\;{\rm with}\;
  \langle\eta,\widehat\eta\rangle\ge\cos\varphi.
  \end{align}  
  We next select such an $\widehat\eta$, which additionally satisfies 
  $y+t\widehat\eta\in B(x,\eps)$ for all $t\in[0,s]$ and some $s>0$, or  
  $y-t\widehat\eta\in B(x,\eps)$ for all such $t$. Without loss of generality, we assume the former.
  
  By differentiability of $f$ at $y$, we can pick a sequence $t_\nu\searrow 0$ with $y+t_\nu\widehat\eta\in\interior{B(x,\eps)}$ and 
  \begin{align} \label{increment:difference:sequence}
  |f(y+t_\nu\widehat\eta)-f(y)-\langle\nabla f(y),
  t_\nu\widehat\eta\rangle|<\frac{1}{\nu}t_\nu.
  \end{align} 
  At the same time, since $f$ is locally Lipschitz, we obtain by Lebourg's mean-value theorem that 
  \begin{align*}
  f(y+t_\nu\widehat\eta)-f(y)=\langle u_\nu,t_\nu\widehat\eta\rangle
  \end{align*}
  for some $u_\nu\in\bar\partial f(z)$, where $z=\lambda y+(1-\lambda)(y+t_\nu\widehat\eta)$ and $\lambda\in(0,1)$.
  Thus, we get by \eqref{increment:difference:sequence} that
  \begin{align*} 
  |\langle\widehat\eta,\nabla f(y)-u_\nu\rangle|<\frac{1}{\nu},
  \end{align*}   
  which contradicts \eqref{gradient:difference:along:eta:hat}, since  
  $\mathring G_\eps(x)\supset G_\epsilon(x)$ for any $0<\epsilon<\eps$  by definition, 
  and therefore $u_\nu\in\mathring G_\eps(x)$ for all $\nu$ by part (iii). 
  
  For the general case, let $u\in \mathring G_\eps(x)$. Analogously to the 
  proof of part (iii), we have
  \begin{align*}
  u=\lim_k u_k\quad{\rm where}\quad u_k=\sum_{i=1}^{n+1}\lambda_k^iu_k^i
  \end{align*}
  and each $u_k^i=\nabla f(x_k^i)$ for some $x_k^i\in B(x,\eps)$. It follows from the proof so far that each $u_k^i\in G_\eps(x)$, and since $G_\eps(x)$ is closed and convex, that also $u\in G_\eps(x)$.
  
  To prove \eqref{Geps:circle:equivalent}, denote $A:=\cl(\conv(\cup_{x'\in\interior{B(x,\eps)}}\bar\partial f(x')))$ and notice that 
  $A\subset G_\eps(x)$ by part (iii). One can also directly verify that $\mathring G_\eps(x)\subset A$ by the definition of $A$.
  Thus, since  $G_\eps(x)=\mathring G_\eps(x)$ by what we proved so far in part (iv), we deduce that \eqref{Geps:circle:equivalent} holds. 
  Finally, since the definition of the Clarke generalized gradient at each point is independent of the full-measure set in its neighborhood where $f$ is differentiable (cf. \cite[Theorem 9.61, Page 403]{RTR-RJBW:98}), we also get that $\mathring G_\eps(x)$ is independent of the set $\D_f$. Since $G_\eps(x)=\mathring G_\eps(x)$, as we just proved, the same holds also for $G_\eps(x)$, which concludes the proof. 
\end{proof}

  Note that part (i) of
  Lemma~\ref{lemma:Clarke:gradient:properties} is a special case of
  Theorem~5.2 in \cite{JVB-ASL-MLO:02}, which considers functions that
  are continuous, absolutely continuous on lines, and differentiable almost
  everywhere.  Due to the more general assumptions, the
  corresponding property in \cite{JVB-ASL-MLO:02} holds only as an
  inclusion, i.e., $\bar\partial f(x)\subset\cap_{\eps>0}G_\eps(x)$.
We next specialize the discussion to the function~$f$ in
\eqref{f:pointwise:max}, which is locally Lipschitz by
Proposition~\ref{prop:lower:C2:implications}(ii).  The following
result establishes that the inflation of the Clarke generalized
gradient of $f$ contains derivatives of $F$ at nearby points when its
parameters are close to their optimal values.
%
%

\begin{lemma}\label{lemma:approximate:gradient:inclusion}
  \longthmtitle{Approximate gradient inclusion} Let
  $x_\star\in\Rat{n}$ and $\rho>0$. Then, there exists $\eps>0$ such
  that
  \begin{align*}
    \nabla_xF(x',\theta)\in B(\bar\partial f(x_\star),\rho), \quad \textup{for all}\; x'\in\interior{B(x_\star,\eps)}
    \cap\mathcal D\;{\rm and}\;
    \theta\in B(\theta_\star(x'),\psi(x',\rho)),
  \end{align*}
  %
  %
  where $\psi$ is given in \eqref{psi:dfn}.
\end{lemma}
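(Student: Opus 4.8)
The plan is to estimate $\dist(\nabla_x F(x',\theta),\bar\partial f(x_\star))$ directly and to show it does not exceed $\rho$ once $\eps$ is chosen small enough, depending only on $x_\star$ and $\rho$. First I would introduce, for each admissible pair $(x',\theta)$, a point $\theta'\in\theta_\star(x')$ realizing the distance $\dist(\theta,\theta_\star(x'))$; this is possible because $\theta_\star(x')$ is compact, being the set of maximizers of the continuous map $F(x',\cdot)$ over the compact set $\Theta$. The triangle inequality then gives
\[
\dist(\nabla_x F(x',\theta),\bar\partial f(x_\star))\le\|\nabla_x F(x',\theta)-\nabla_x F(x',\theta')\|+\dist(\nabla_x F(x',\theta'),\bar\partial f(x_\star)),
\]
and I would bound the first summand by $\rho/3$ and the second by $2\rho/3$.

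For the first summand, since $x'\in\mathcal D$, Assumption~\ref{assumptions:for:F}(iii) gives the Lipschitz estimate $\|\nabla_x F(x',\theta)-\nabla_x F(x',\theta')\|\le L_{\nabla_xF}^\theta(x')\|\theta-\theta'\|$, and the hypothesis $\theta\in B(\theta_\star(x'),\psi(x',\rho))$ together with the definition of $\psi$ in \eqref{psi:dfn} yields $\|\theta-\theta'\|\le\psi(x',\rho)=\rho/(3L_{\nabla_xF}^\theta(x'))$, so the first summand is at most $\rho/3$. This holds for every admissible $(x',\theta)$ and imposes no constraint on $\eps$.

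For the second summand I would first fix $\eps$. Since $f$ is locally Lipschitz on $\Rat{n}$ by Proposition~\ref{prop:lower:C2:implications}(ii), Lemma~\ref{lemma:Clarke:gradient:properties}(ii) with tolerance $2\rho/3$ supplies an $\eps>0$, depending only on $x_\star$ and $\rho$, with $G_\eps(x_\star)\subset B(\bar\partial f(x_\star),2\rho/3)$. Then for any admissible $x'\in\interior{B(x_\star,\eps)}\cap\mathcal D$: by Assumption~\ref{assumptions:for:F}(i), $f$ is lower-$\mathcal C^1$ near $x'$ with local representation $f=\max_{\theta\in\Theta}F(\cdot,\theta)$, so Proposition~\ref{prop:lowerC1:subgradient} gives $\partial f(x')=\conv(\{\nabla_x F(x',\theta)\mid\theta\in\theta_\star(x')\})$; in particular $\nabla_x F(x',\theta')\in\partial f(x')\subset\conv(\partial f(x'))=\bar\partial f(x')$, the last equality being Proposition~\ref{prop:Clarkegradiens:sv:subgradient}. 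Since $x'\in\interior{B(x_\star,\eps)}$, Lemma~\ref{lemma:Clarke:gradient:properties}(iii) yields $\bar\partial f(x')\subset G_\eps(x_\star)$, hence $\nabla_x F(x',\theta')\in B(\bar\partial f(x_\star),2\rho/3)$, i.e.\ $\dist(\nabla_x F(x',\theta'),\bar\partial f(x_\star))\le2\rho/3$. Combining the two bounds in the displayed inequality gives $\dist(\nabla_x F(x',\theta),\bar\partial f(x_\star))\le\rho$, which is the assertion.

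The argument is essentially a bookkeeping assembly of the preliminary results, so I do not anticipate a genuine obstacle. The one point that warrants care is the uniformity of $\eps$: it must be chosen before $x'$ and $\theta$, which is exactly what Lemma~\ref{lemma:Clarke:gradient:properties}(ii) delivers (its $\eps$ depends only on $x_\star$ and the tolerance), while the $\rho/3$ control of the $\theta$-perturbation term is independent of $\eps$ altogether. A secondary point is verifying that the identification $\nabla_x F(x',\theta')\in\bar\partial f(x')$ needs only $x'\in\mathcal D$, so that the lower-$\mathcal C^1$ local representation of Assumption~\ref{assumptions:for:F}(i) and hence Proposition~\ref{prop:lowerC1:subgradient} apply — and this is precisely the standing hypothesis $x'\in\interior{B(x_\star,\eps)}\cap\mathcal D$, rather than the smaller set $\mathcal D_f$ on which $f$ itself is differentiable.
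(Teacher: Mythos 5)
Your proof is correct and follows essentially the same route as the paper's: a Lipschitz-in-$\theta$ bound from Assumption~\ref{assumptions:for:F}(iii) combined with the definition of $\psi$ to control the perturbation away from an exact maximizer, the identification $\nabla_x F(x',\theta')\in\partial f(x')\subset\bar\partial f(x')$ via Propositions~\ref{prop:lowerC1:subgradient} and~\ref{prop:Clarkegradiens:sv:subgradient}, and the two containments $\bar\partial f(x')\subset G_\eps(x_\star)\subset B(\bar\partial f(x_\star),\cdot)$ from Lemma~\ref{lemma:Clarke:gradient:properties}(iii) and~(ii). The only cosmetic difference is your $\rho/3+2\rho/3$ split versus the paper's $\rho/2+\rho/2$; both close the argument.
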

\begin{proof}
  Note that, due to Proposition~\ref{prop:lower:C2:implications}(ii),
  $f$ is locally Lipschitz. Thus, by
  Lemma~\ref{lemma:Clarke:gradient:properties}(ii), there exists
  $\eps>0$ such that
  \begin{align}\label{G:epsilon:containment}
    G_\eps(x_\star)\subset B(\bar\partial f(x_\star),\rho/2).
  \end{align}  
  For any $x'\in\interior{B(x_\star,\eps)}\cap\mathcal D$ and
  $\theta\in B(\theta_\star(x'),\psi(x',\rho))$, there exists
  $\theta_\star\in\theta_\star(x')$ with
  $\|\theta_\star-\theta\|\le\psi(x',\rho)$. Thus, we have from
  Assumption~\ref{assumptions:for:F}(iii) that
  %
  \begin{align}\label{gradients:half:rho:distance}
    \|\nabla_x F(x',\theta)-\nabla_x F(x',\theta_\star)\|
    \le L_{\nabla_x F}^\theta(x')\|\theta-\theta_\star\|
    \le L_{\nabla_x F}^\theta(x')
    \frac{\rho}{3 L_{\nabla_x F}^\theta(x')}<\frac{\rho}{2},
  \end{align} 
  where we used the definition of $\psi$ in \eqref{psi:dfn} to obtain
  the second inequality.  Since $\theta_\star\in\theta_\star(x')$, we
  have from Proposition~\ref{prop:lowerC1:subgradient} that
  $\nabla_x F(x',\theta_\star)\in \partial f(x')$, and thus, from
  Proposition~\ref{prop:Clarkegradiens:sv:subgradient}, that also
  $\nabla_x F(x',\theta_\star)\in \bar \partial f(x')$. We therefore
  get from \eqref{gradients:half:rho:distance} that
  \begin{align*}
    \nabla_x F(x',\theta)\in B(\bar\partial f(x'),\rho/2).
  \end{align*}  
  From the latter, \eqref{G:epsilon:containment}, and
  Lemma~\ref{lemma:Clarke:gradient:properties}(iii) we deduce the
  desired result.
\end{proof}

Next, we provide the proof of
Proposition~\ref{prop:U:properties:and:tk:bound}.

\begin{proof}[Proof of Proposition~\ref{prop:U:properties:and:tk:bound}]
  \textit{Part (i).} Taking into account \eqref{distance:rho:eps}, the definition of $G_\eps(x_\star)$ in \eqref{G:epsilon}, and Lemma~\ref{lemma:Clarke:gradient:properties}(iv), we can
  select $g\in \conv(\nabla f(\interior{B(x_\star,\eps)}\cap \mathcal D_f))$ with
  \begin{align}\label{g:bound}
    \|g\|\le \rho_\eps(x_\star)+\eta/3.
  \end{align}
  Then, since $m\ge n+1$, it follows from Caratheodory's theorem and
  \eqref{set:D:eps} that there exist
  $(x_\star^1,\ldots,x_\star^m)\in D_\eps^m(x_\star)$ and
  $\lambda_1,\ldots,\lambda_m\ge 0$ with $\sum_{i=1}^m\lambda_i=1$
  such that
  \begin{align*}
    g=\sum_{i=1}^m\lambda_i\nabla f(x_\star^i).
  \end{align*}  
  By the definition of $D_\eps^m(x_\star)$, there exists
  $0< \eps'<\eps$ such that
  %
  %
  \begin{align}\label{U:prime:containment}
    U':=(\interior{B(x_\star^i, \eps')}\cap D_f)^m\subset
    D_{\eps-\eps'}(x_\star),
  \end{align} 
  and it follows that $U'$ is a full-measure set. Taking  into
  account Proposition~\ref{prop:lower:C2:implications}(i),(iii), we
  deduce that $f$ is differentiable at each $x_\star^i$ and that its
  gradient at these points is continuous relative to $\mathcal
  D_f$. Therefore, there exists $\eps''>0$ with
  \begin{align} \label{gradient:proximity}
  \|\nabla f(x_\star^i)-\nabla f(x)\|\le\frac{\eta}{3}\;\textup{for all}\;
  x\in B(x_\star^i,\eps'')\cap\mathcal D_f\;\textup{and all}\;i=1,\ldots,m.
  \end{align}
  This is the key technical point at which we relax the requirement that 
  $f$ be continuously differentiable on a full-measure set. Instead, we only require continuous differentiability relative to $\mathcal D_f$, which allows us to establish \eqref{gradient:proximity} on the full-measure set  $B(x_\star^i,\varepsilon'')\cap\mathcal D_f$.
  Next, we select $\tau:=\min\{\eps',\eps''\}$ and set
  \begin{align*}
    U:=(\interior{B(x_\star^i,\tau)}\cap\mathcal D_f)^m.
  \end{align*} 
  It follows that $U$ is a full-measure set, which establishes our
  corresponding requirement in the statement of the proposition, and
  that $U\subset D_{\eps-\tau}(x_\star)$ because of
  \eqref{U:prime:containment}.  We also obtain from \eqref{psi:dfn}, \eqref{g:bound},
  \eqref{gradient:proximity}, and
  Assumption~\ref{assumptions:for:F}(iii) that
  \begin{align}
    \Big\|\sum_{i=1}^m\lambda_i\nabla_x F(y^i,\theta^i)\Big\|
    & \le 
      \Big\|\sum_{i=1}^m\lambda_i
      \nabla
      f(x_\star^i)\Big\|
      \nonumber
    \\
    & \quad +\sum_{i=1}^m\lambda_i(\|\nabla f(x_\star^i)-\nabla 
      f(y^i)\|+\|\nabla f(y^i)-\nabla_x F(y^i,\theta^i)\|) \nonumber \\
    & \le \|g\|+\sum_{i=1}^m\lambda_i\Big(\frac{\eta}{3}
      +L_{\nabla_xF}^\theta(y^i)
      \frac{\eta}{3L_{\nabla_xF}^\theta(y^i)}\Big) \nonumber \\
    & \le \rho_\eps(x_\star)+\eta/3+2\eta/3=\rho_\eps(x_\star)+\eta,
      \label{gradient:convex:combination:bound}
  \end{align} 
  for all $(y^1,\ldots,y^m)\in U$ and
  $\theta^i \in B(\theta_\star(y^i),\psi(y^i,\eta))$. 
  %
  %
  In addition, since $U\subset D_{\eps-\tau}(x_\star)$ and for any
  $x\in B(x_\star,\tau)$ it holds that
  $D_{\eps-\tau}(x_\star)\subset D_\eps(x)$ and
  \begin{align*}
    \dist(0,\conv(\{\nabla_x F(y^i,\theta^i)\}_{i=1}^m))\le 
    \Big\|\sum_{i=1}^m\lambda_i\nabla_x F(y^i,\theta^i)\Big\|
  \end{align*}
  for all $(y^1,\ldots,y^m)\in U$ and
  $\theta^i \in B(\theta_\star(y^i),\psi(y^i,\eta))$, we deduce from
  \eqref{set:V:dfn} and \eqref{gradient:convex:combination:bound} that
  $U\subset V_\eps(x_\star,x,\eta)$.

  \noindent \textit{Part (ii).} Let $(x^{k1},\ldots,x^{km})\in
  U$ as in the statement and assume on the contrary that $t_k<\gamma\eps_k/3$. Then by the rules of Step~4, there is some $t\in[\gamma\eps_k/3\eps_k/3]$ such that the inequality of Step~4(iii) is violated. We will show that this cannot be true. To this end, since  
  $(x^{k1},\ldots,x^{km})\in V_\eps(x_\star,x_\star,\eta)$ by part (i), we get from \eqref{Gk:dfn}, \eqref{set:V:dfn}, \eqref{psi:dfn}, and the fact that $\delta_k\le \eta/3$ that
  \begin{align}\label{gk:bound}
    \|g^k\|=\dist(0,G_k)=\dist(0,\conv(\{\nabla_x
    F(x^{ki},\theta_\star^{ki})\}_{i=1}^m)) \le
    \rho_\eps(x_\star)+\eta.
  \end{align} 
  As $g^k\in G_k$, we have by \eqref{Gk:dfn:first:part} that
  \begin{align} \label{gk:decomposition}
    g^k =
    \sum_{i=1}^m\lambda_i\nabla_xF(x^{ki},\theta_\star^{ki})  
    =
    \sum_{i=1}^m\lambda_i\nabla f(x^{ki})        
    +
    \sum_{i=1}^m\lambda_i(\nabla_xF(x^{ki},\theta_\star^{ki})-\nabla f(x^{ki}))
  \end{align} 
  for some $\lambda_1,\ldots,\lambda_m\ge 0$ with
  $\sum_{i=1}^m\lambda_i=1$. Taking also into account that
  \begin{align*}
    \dist(\theta_\star^{ki},\theta_\star(x^{ki}))\le
    \frac{\delta_k}{ L_{\nabla_xF}^\theta(x^{ki})},
  \end{align*}
  we obtain from Assumption~\ref{assumptions:for:F}(iii) that
  \begin{align}  \label{gk:decomposition:bound:second:term}
    \Big\|\sum_{i=1}^m\lambda_i(\nabla_xF(x^{ki},\theta_\star^{ki})-\nabla 
    f(x^{ki}))\Big\|
 \le \sum_{i=1}^m\lambda_i\frac{\delta_k}{L_{\nabla_xF}^\theta(x^{ki})} 
      L_{\nabla_xF}^\theta(x^{ki})\le\frac{\eta}{3}\le\eta.
  \end{align}   
  Further, we have from \eqref{set:V:dfn}
    that $V_\eps(x_\star,x_\star,\eta) \subset D_\eps(x_\star)$.
    Thus, $(x^{k1},\ldots,x^{km})\in D_\eps(x_\star)$, and we obtain from
    \eqref{G:epsilon} and \eqref{set:D:eps} that
    $\sum_{i=1}^m\lambda_i\nabla f(x^{ki})\in G_\eps(x_\star)$.
  Consequently, we get from \eqref{gk:decomposition} and \eqref{gk:decomposition:bound:second:term} that $\dist(g^k,G_\eps(x_\star))\le\eta$. Combining this with \eqref{distance:rho:eps}, \eqref{gk:bound}, and our assumption that Lemma~\ref{lemma:beta:separation} holds with $C\equiv G_\eps(x_\star)$, it follows that 
  \begin{align}\label{separation:in:Gepsilon} 
    \langle v,g^k\rangle>\beta\|g^k\|^2,\quad\;\textup{for all}\; v\in G_\eps(x_\star).  
  \end{align} 
  Next, for any 
  $t\in[\gamma\eps_k/3,\eps_k/3]$, we obtain from Lebourg's mean
  value theorem~\cite[Theorem 10.17, Page 201]{FC:13} that
  \begin{align}\label{mean:value:thm}
    f(x^k+td^k)-f(x^k)=t\langle v,d^k\rangle,
  \end{align}
  where $v\in\bar\partial f(x)$ and
  $x=\lambda (x^k+td^k)+(1-\lambda)x^k$ for certain
  $\lambda\in[0,1]$. Since $d^k= -g^k/\|g^k\|$
  %
  %
  and $\eps_k\le\eps$ imply that
  $t\|d^k\|\le\eps/3$, and also $\|x^k-x_\star\|\le \eps/3$ by assumption,
  we obtain that $x\in B(x_\star,2\eps/3)$. Hence, $v\in G_\eps(x_\star)$ by
  \eqref{G:epsilon} and Lemma~\ref{lemma:Clarke:gradient:properties}(iii), and it follows from \eqref{separation:in:Gepsilon}
  %
  that $\langle v,d^k\rangle<-\beta \|g^k\|$.
    %
  %
  Thus, 
  \eqref{mean:value:thm} yields
  \begin{align*}
    f(x^k+td^k)<f(x^k)-\beta t \|g^k\|
  \end{align*}
  and we get from Assumption~\ref{assumptions:for:F}(ii) that 
  \begin{align}
    F(x^k+td^k,(\theta_\star^k)')
    & \le 
      f(x^k+td^k) +|F(x^k+td^k,(\theta_\star^k)')-f(x^k+td^k)| \nonumber
    \\
    & \le 
      f(x^k)-\beta t \|g^k\|+|F(x^k+td^k,(\theta_\star^k)')-f(x^k+td^k)|
      \nonumber \\
    & \le F(x^k,\theta_\star^k)-\beta    
      t \|g^k\|+|F(x^k+td^k,(\theta_\star^k)')-f(x^k+td^k)| \nonumber \\ 
    & \quad+|f(x^k)-F(x^k,\theta_\star^k)| \nonumber \\
    & \le F(x^k,\theta_\star^k)-\beta t\|g^k\| 
      +L_F^\theta(x^k+td^k)\dist((\theta_\star^k)',\theta_\star(x^k+td^k)) 
      \nonumber \\
    & \quad+L_F^\theta(x^k)\dist(\theta_\star^k,\theta_\star(x^k)) \le 
      F(x^k,\theta_\star^k)-\beta t\|g^k\|+\frac{c_k}{2} 
      \label{approximate:decrease:condition:derivation}          
  \end{align}
  for any $\theta_\star^k$ and $(\theta_\star^k)'$ satisfying
  \eqref{theta:star:bounds}.  It follows that the inequality of Step~4(iii) is valid for all $t\in [\gamma\eps_k/3,\eps_k/3]$, which provides the desired contradiction. We conclude that $t_k\ge\gamma\eps_k/3$.
\end{proof}
 
\begin{proof}[Proof of Lemma~\ref{lemma:convergence}]
  Assume on the contrary that
  $\dist(0,\bar\partial f(x_\star))=\rho>0$.  From
  \linebreak Lemma~\ref{lemma:approximate:gradient:inclusion}, there exists
  $\eps>0$ such that
  \begin{align}\label{approximate:gradient:inclusion}
    \nabla_xF(x',\theta)\in 
    B(\bar\partial f(x_\star),\rho/2) \quad \text{for all }
    x'\in\interior{B(x_\star,\eps)}\cap\mathcal D,  
    \theta\in B(\theta_\star(x'),\psi(x',\rho/2)).
  \end{align}
  Since $\eps_k\searrow 0$ and $x_{k_\ell}\to x_\star$,
  $\delta_k\searrow 0$, and $g_{k_\ell}\to 0$, there exists an index
  $\ell$ such that $B(x^{k_\ell},\eps_{k_\ell})\subset\interior{B(x_\star,\eps)}$,
  $\delta_{k_\ell}\le\rho/6$, and $\|g^{k_\ell}\|<\rho/2$.  Taking
  into account that the sampled points in Step~1 of the algorithm satisfy $x^{k_\ell i}\in B(x^{k_\ell},\eps_{k_\ell})$,
  $i=1,\ldots,m$, we obtain from
  $B(x^{k_\ell},\eps_{k_\ell})\subset \interior{B(x_\star,\eps)}$,
  $\delta_{k_\ell}\le\rho/6$, \eqref{Gk:dfn:second:part}, \eqref{psi:dfn}, and
  \eqref{approximate:gradient:inclusion} that
  \begin{align*}
    \nabla_xF(x^{k_\ell i},\theta_\star^{k_\ell i})\in B(\bar\partial 
    f(x_\star),\rho/2),
  \end{align*}
  for all $i=1,\ldots,m$. Thus, by convexity of $B(\bar\partial 
      f(x_\star),\rho/2)$ and monotonicity of the convex hull operation with respect to set inclusion, we deduce from \eqref{Gk:dfn:first:part} that
  \begin{align*}
    G_{k_\ell}\subset B(\bar\partial f(x_\star),\rho/2).
  \end{align*}
  Then it follows from the definition of $g^k$ in Step~2 of the algorithm that also 
  \begin{align*}
    g^{k_\ell}\in B(\bar\partial f(x_\star),\rho/2). 
  \end{align*}
  Combining this with the fact that $\|g^{k_\ell}\|<\rho/2$, we reach
  the contradiction that $\dist(0,\bar\partial f(x_\star))<\rho$, which concludes the proof. 
\end{proof}

\subsection{Proofs from Section~\ref{sec:invariance}}

Here we provide the proof of
Lemma~\ref{lemma:sampled:gradient:direction:bound} and
Proposition~\ref{prop:boundeness}.  To do so, we use the following
bound on the deviation between nearby direction vectors that
point to the closest element of a convex set.

\begin{lemma}\longthmtitle{Maximum normal deviation around
    ball}\label{lemma:normal:deviation:around:ball}
  Let $r>0$ and $\epsilon\in(0,r)$. Then, for all
  $x\in {\mathcal K_{\oplus r}^{\rm compl}}$, where
  $\mathcal K_{\oplus r}^{\rm compl}$ is given in
  Lemma~\ref{lemma:sampled:gradient:direction:bound},
  %
  %
  we have
  \begin{align} \label{eta:increment:bound}
    \|\eta(x)-\eta(y)\|\le \sqrt{2}\sqrt{1-\sqrt{1-(\epsilon/r)^2}}
    \quad\textup{for all}\;y\in B(x,\epsilon).
  \end{align}
\end{lemma}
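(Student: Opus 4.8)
Write $p:=\proj_{\mathcal K}(x)$, $q:=\proj_{\mathcal K}(y)$, $u:=x-p$ and $v:=y-q$, so that $\eta(x)=u/\|u\|$ and, once we know $y\notin\mathcal K$, $\eta(y)=v/\|v\|$. Since these are unit vectors, if $\phi$ denotes the angle between $u$ and $v$ then $\|\eta(x)-\eta(y)\|^2=2-2\langle u,v\rangle/(\|u\|\,\|v\|)=2(1-\cos\phi)$, so the bound \eqref{eta:increment:bound} is \emph{equivalent} to $\cos\phi\ge\sqrt{1-(\epsilon/r)^2}$. The plan is thus to control the angle $\phi$: I will show it is acute and that $\sin\phi\le\epsilon/r$.

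The two ingredients I would use are (a) the lower bound $\|u\|=\dist(x,\mathcal K)\ge r$, which is immediate from $x\in\mathcal K_{\oplus r}^{\rm compl}=\Rat{n}\setminus B(\mathcal K,r)$; and (b) the estimate $\|u-v\|\le\|x-y\|\le\epsilon$, i.e.\ nonexpansiveness of $I-\proj_{\mathcal K}$. For (b), note $\mathcal K=\prod_{j=1}^J\mathcal K_j$ is closed and convex, so by the variational characterization of the metric projection $\langle x-p,q-p\rangle\le 0$ and $\langle y-q,p-q\rangle\le 0$; adding these gives $\|p-q\|^2\le\langle x-y,p-q\rangle$, and hence $\|u-v\|^2=\|x-y\|^2-2\langle x-y,p-q\rangle+\|p-q\|^2\le\|x-y\|^2$. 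From (a) and (b) I would then get $\|v\|\ge\|u\|-\epsilon\ge r-\epsilon>0$ (so indeed $y\notin\mathcal K$ and $\eta(y)$ is well defined) and $\langle u,v\rangle=\|u\|^2+\langle u,v-u\rangle\ge\|u\|(\|u\|-\epsilon)\ge\|u\|(r-\epsilon)>0$, so that $\phi\in[0,\pi/2)$.

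The final step would invoke the elementary fact that the distance from $u$ to the line $\spn\{v\}$ equals $\|u\|\sin\phi$; since $v$ lies on this line, that distance is at most $\|u-v\|\le\epsilon$, whence $\sin\phi\le\epsilon/\|u\|\le\epsilon/r$. As $\phi$ is acute, $\cos\phi=\sqrt{1-\sin^2\phi}\ge\sqrt{1-(\epsilon/r)^2}$, and plugging into $\|\eta(x)-\eta(y)\|^2=2(1-\cos\phi)$ and taking square roots yields \eqref{eta:increment:bound}. The main obstacle I anticipate is not any hard computation but the need to pin down the \emph{sign} of $\langle u,v\rangle$ — equivalently that $\phi$ is acute, so that $\cos\phi$ is the positive root of $1-\sin^2\phi$; without this, $2(1-\cos\phi)$ could be as large as $4$ and the stated bound would fail. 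This is precisely where the hypothesis $\epsilon<r$ is used, through $\|u\|-\epsilon\ge r-\epsilon>0$.
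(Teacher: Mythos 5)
Your proposal is correct, and it shares the paper's central ingredient: both proofs hinge on the two variational inequalities $\langle x-p,q-p\rangle\le 0$ and $\langle y-q,p-q\rangle\le 0$ for the projections $p=\proj_{\mathcal K}(x)$, $q=\proj_{\mathcal K}(y)$, combined to give $\|(x-p)-(y-q)\|\le\|x-y\|\le\epsilon$ (the paper phrases this as producing a point $z\in B(x,\epsilon)$ with $z-p=y-q$, but the content is the same nonexpansiveness of $I-\proj_{\mathcal K}$). Where you genuinely diverge is in the concluding step. The paper normalizes to $\|x-p\|=r=1$, reduces to a two-dimensional plane, and solves an explicit constrained maximization of $\bigl\|\tfrac{(u,v)}{\|(u,v)\|}-(0,1)\bigr\|^2$ over the circle $u^2+(v-1)^2=\epsilon^2$ by computing the sign of a derivative, locating the maximizer at $v_\star=1-\epsilon^2$. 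You instead bound the angle $\phi$ between $u=x-p$ and $v=y-q$ directly: the distance from $u$ to $\spn\{v\}$ is $\|u\|\sin\phi\le\|u-v\|\le\epsilon$, so $\sin\phi\le\epsilon/r$, and the positivity of $\langle u,v\rangle$ (which you correctly derive from $\|u\|\ge r>\epsilon$) pins $\phi$ as acute, yielding $\cos\phi\ge\sqrt{1-(\epsilon/r)^2}$ and hence the stated bound via $\|\eta(x)-\eta(y)\|^2=2(1-\cos\phi)$. Your route is more elementary — it avoids the planar reduction and the calculus — and it recovers the same extremal configuration (the line $\spn\{v\}$ tangent to $B(x,\epsilon)$), so the bound is tight in both arguments. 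A small bonus of your version is that you explicitly verify $\|v\|\ge r-\epsilon>0$, so that $y\notin\mathcal K$ and $\eta(y)$ is well defined, a point the paper's proof leaves implicit.
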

\begin{proof}
  Let $x\in {\mathcal K_{\oplus r}^{\rm compl}}$. Then $\|x-x'\|>r$, where 
  $x':=\proj_{\mathcal K}(x)$.
  %
  %
  %
  %
  Using a scaling argument and the fact that we can accordingly
  enlarge $B(x,\epsilon)$ when $x$ is strictly farther than $r$ from
  ${\mathcal K}$, we may assume without loss of generality that 
  $\|x-x'\|=1$, $r=1$, and show the result with $\epsilon/r\equiv\epsilon$.  
  Next, let $y\in B(x,\epsilon)$ and denote $y':=\proj_{\mathcal K}(y)$. 
  We claim that there exists $z\in B(x,\epsilon)$ such that $z-x'=y-y'$ and
  therefore
  \begin{align*}
    \eta(y)=\frac{y-y'}{\|y-y'\|}=\frac{z-x'}{\|z-x'\|}.
  \end{align*} 
  Indeed, denoting $\zeta=x-x'$, $\chi=y'-x'$, $\xi=y-y'$, and taking 
  into account that $\zeta$ and $\xi$ are the outward pointing normal vectors
  to the supporting hyperplanes of $\mathcal K$ at $x'$ and $y'$, 
  respectively, we deduce that
  \begin{align} \label{normal:vetcor:angles}
  \langle\chi,\zeta\rangle\le 0 \quad {\rm and}\quad \langle\chi,\xi\rangle\ge 0.
  \end{align} 
  In addition, as $\|y-x\|\le r$ and $y-x=\xi+\chi-\zeta$, we get that 
  \begin{align*}
  \|\xi+\chi-\zeta\|^2\le \epsilon^2 & \Longrightarrow \|\xi-\zeta\|^2+\|\chi\|^2
  +2\langle\chi,\xi-\zeta\rangle\le \epsilon^2 \\
  & \Longrightarrow \|\xi-\zeta\|^2+\|\chi\|^2
  +2\langle\chi,\xi\rangle-2\langle\chi,\zeta\rangle\le \epsilon^2 
  \Longrightarrow \|\xi-\zeta\|^2\le \epsilon^2, 
  \end{align*}
  where we exploited that all the omitted terms in the last implication
  are nonnegative by \eqref{normal:vetcor:angles}. Since $\xi-\zeta=z-x$, 
  we obtain that $z\in B(x,\epsilon)$. 
      

  As a result, we can bound the distance between $\eta(x)$ and
  $\eta(y)$ by the maximum distance between $\eta(x)$ and all unit
  vectors $(z-x')/\|z-x'\|$ with $z\in B(x,\epsilon)$. This distance
  is actually attained for some vector $z_\star$ on the boundary of
  this ball. Consider now the two-dimensional plane that passes
  through $x$, $x'$, and $z_\star$, and a Euclidean coordinate system
  $(u,v)$ on this plane such that $x'=(0,0)$, $x=(0,1)$, and
  $z_\star=(u_\star,v_\star)$. Then $z_\star$ is determined by the
  solution to the optimization problem
  \begin{align*}
    \max_{(u,v)}\Big\|\frac{(u,v)}{\|(u,v)\|}-(0,1)\Big\|^2,
  \end{align*}        
  subject to the constraint that $(u,v)$ belongs the intersection of
  the plane $H$ and the boundary of the ball $B(x,\epsilon)$, namely,
  that $u^2+(v-1)^2=\epsilon^2$. Rewriting the function that we seek
  to maximize as
  \begin{align*}
    h(u,v) =\frac{u^2+(v-\|(u,v)\|)^2}{\|(u,v)\|^2} 
    =\frac{u^2+v^2-2v\sqrt{u^2+v^2}+u^2+v^2}{u^2+v^2}=2-\frac{2v}{\sqrt{u^2+v^2}}
  \end{align*} 
  and taking into account the constraint, it suffices to maximize
  $g(v):=2-2v/\sqrt{\epsilon^2+2v-1}$. Computing the sign of its
  derivative, we have
  \begin{align*}
    \sign g'(v)=\sign\Big(-\sqrt{\epsilon^2+2v-1} 
    +\frac{v2}{2\sqrt{\epsilon^2+2v-1}}\Big)=\sign(-\epsilon^2-2v+1+v). 
  \end{align*}
  Thus, we attain the maximum for
  $v_\star=1-\epsilon^2$, which lies in inside the feasible set 
  $[1-\epsilon,1+\epsilon]$ of $v$. It follows that  
  $u_\star=\sqrt{\epsilon^2-\epsilon^4}$ and we get
  \begin{align*}
    \frac{u_\star^2+(v_\star-\|(u_\star,v_\star)\|)^2}{\|(u_\star,v_\star)\|^2} 
    & 
      =\frac{\epsilon^2-\epsilon^4+(1-\epsilon^2)^2 -
      2(1-\epsilon^2)\sqrt{1-\epsilon^2}  
      +1-\epsilon^2}{1-\epsilon^2}
    \\
    & =\epsilon^2+1-\epsilon^2-2\sqrt{1-\epsilon^2}+1=2(1-\sqrt{1-\epsilon^2}),
  \end{align*}
  which completes the proof.
\end{proof}

\begin{proof}[Proof of
  Lemma~\ref{lemma:sampled:gradient:direction:bound}]
  Let $r>0$ and $x\in {\mathcal K_{\oplus r}^{\rm compl}}$. By continuity 
  of $\alpha$, there exists $\epsilon\in (0,r)$ such that $\alpha(y)\ge 
  2/3\alpha(x)$ for all $y\in B(x,\epsilon)$. In addition, by
  Assumption~\ref{assumptions:for:F}(ii), since $B(x,\epsilon)$
  is bounded, there exists a (uniform with
  respect to $\theta$) Lipschitz constant $L$ that bounds the
  gradients of $F$ on $B(x,\epsilon)\cap\mathcal D$. By
  shrinking $\epsilon$ further to guarantee that the right-hand side
  of \eqref{eta:increment:bound} is below $\alpha(x)/(3L)$, it
  follows from Lemma~\ref{lemma:normal:deviation:around:ball} and
  Assumption~\ref{assumption:gradient:cone} that
  \begin{align*}
    \langle \nabla_x F(y^i,\theta^i), \eta(x)\rangle
    &
      = \langle \nabla_x F(y^i,\theta^i), \eta(y^i)\rangle+\langle \nabla_x 
      F(y^i,\theta^i),\eta(x)-\eta(y^i)\rangle
    \\
    & \ge\alpha(y^i)-\|\nabla_x F(y^i,\theta^i)\| \|\eta(x)-\eta(y^i)\|
     {\ge}  2/3\alpha(x)-1/3\alpha(x)=1/3\alpha(x)
  \end{align*}
  for all $i$. Thus, for any
  $g\in \conv(\{\nabla_x F(y^1,\theta^1),\ldots, \nabla_x
  F(y^m,\theta^m)\})$ we obtain the bound in the statement.
\end{proof}

\begin{proof}[Proof of Proposition~\ref{prop:boundeness}]
  To prove the result, we need to show that whenever
  $x^k=(x_1^k,\ldots,\linebreak x_J^k)\in\prod_{j=1}^JB({\mathcal
    K_j}, \tau^\star+\eps)$, it also holds that
  $x^{k+1}=(x_1^{k+1},\ldots,x_J^{k+1})\in \prod_{j=1}^JB({\mathcal
    K_j},\tau^\star+\eps)$. It therefore suffices to show that for each
  $j=1,\ldots,J$, whenever $x_j^k\in B({\mathcal K_j},\tau^\star+\eps)$, it
  also holds that $x_j^{k+1}\in B({\mathcal K_j},\tau^\star+\eps)$. We
  distinguish two cases:
  
  \noindent \textit{Case (i): $x_j^k\in B({\mathcal K_j},\eps)$.} Then the 
  result follows directly by taking into account that the stepsize of the
  algorithm is bounded by $\tau\le\tau^\star$ and that
  $\|x_j^k-x_j^{k+1}\|\le\|x^k-x^{k+1}\|$.

  \noindent \textit{Case (ii): $x_j^k\in B({\mathcal K_j},\tau^\star+\eps)$ and
    $\dist(x_j^k,{\mathcal K_j})>\eps$.} Let $d^k=(d_1^k,\ldots,d_J^k)$ be 
    the descent direction of the algorithm and note that
  \begin{align*}
    d^k\in\conv(\{\nabla_xF(x^{ki},\theta_\star^{ki})\}_{i=1}^m)\quad {\rm and} 
    \quad \nabla_{x_j}F(x^{ki},\theta_\star^{ki})\in 
    \cone{x_j^{ki}-{\mathcal K_j}} 
  \end{align*}
  by Step 1 and Assumption~\ref{assumption:gradient:cone},
  respectively. Since each $x^{ki}\in B(x^k,\eps)$ and therefore also
  $x_j^{ki}\in B(x_j^k,\eps)$, we have that
  $\nabla_{x_j}F(x^{ki},\theta_\star^{ki})\in
  \cone{x_j^k-B({\mathcal K_j},\eps)}$ and it follows that also
  \begin{align} \label{dk:in:cone}
    d_j^k\in\cone{x_j^k-B({\mathcal K_j},\eps)}. 
  \end{align}
  If $d_j^k=0$, then $x_j^{k+1}=x_j^k+t^kd_j^k=x_j^k$ and the 
  proof is complete. Otherwise, let 
  \begin{align*}
  s:=\inf\{t>0\,|\,x_j^k+td_j^k\in B({\mathcal K_j},\eps)\}.
  \end{align*}
  By \eqref{dk:in:cone}, $s\in\RgO$. We distinguish two further subcases:
  
  \noindent \textit{Case (iia): $s\le t^k$, where $t^k$ is the
    {step size} of the algorithm at iteration $k$.}
  %
  %
  Then since $t^k\le \tau$, $\|d_j^k\|\le\|d^k\|=1$, and 
  $x_j^k+sd_j^k\in B({\mathcal K_j},\eps)$ by the definition of $s$, we 
  get  
  \begin{align*}
    d(x_j^{k+1},B({\mathcal K_j},\eps))=d(x_j^k+t^kd_j^k,B({\mathcal 
    K_j},\eps))\le\|x_j^k+t^kd_j^k-(x_j^k+sd_j^k)\|\le\tau\le\tau^\star,
  \end{align*}
  which establishes our claim.
  
  \noindent \textit{Case (iib): $s>t^k$.} Let
  $y:={\proj_{B({\mathcal K_j},\eps)}(x_j^k)}$ and denote 
  $z:=x_j^k+sd_j^k$. Since $x_j^k\in B({\mathcal K_j},\tau^\star+\eps)$, we have 
  that $\|y-x_j^k\|\le\tau^\star$. Now if we define
  \begin{align*}
    y'=\frac{t^k}{s}z+\frac{s-t^k}{s}y,
  \end{align*}   
  we get by convexity of $B({\mathcal K_j},\eps)$ and the fact that 
  $y,z\in B({\mathcal K_j},\eps)$, that also $y'\in B({\mathcal 
  K_j},\eps)$. It follows that 
  \begin{align*}
    \|x_j^{k+1}-y'\|
    & 
      =\Big\|x_j^k+t^kd^k-\Big(\frac{t^k}{s}z+\frac{s-t^k}{s}y\Big)\Big\|
    \\
    & =\Big\|x_j^k+t^kd^k-\frac{t^k}{s}(x_j^k+s d^k)-\frac{s-t^k}{s}y\Big\|
      =\frac{s-t^k}{s}\|x_j^k-y\|\le\frac{s-t^k}{s} \tau^\star\le \tau^\star,  
  \end{align*} 
  which establishes that $x_j^{k+1}\in B({\mathcal K_j},\tau^\star+\eps)$ and 
  concludes the proof.
\end{proof}

\subsection{Markov Chain generated by the mGS algorithm and zero-measure events} \label{appendix:MP}

Here we discuss how the mGS algorithm can be described in the language
of discrete-time stationary Markov processes and use this formalism to
prove that the events $\mathcal E_{\ell,k_1,\iota,k_2}^a$ and
$\mathcal E_{\ell,k_1,\iota,k_2}^b$ in the proof of
Theorem~\ref{thm:GS:convergence} have probability zero. To this end,
we need to define a suitable state space and a fixed-in-time
transition kernel capturing the evolution of the algorithm. We
consider the state space
$  \Xi:=\mathbb N_0\times\Rat{n}\times\Rat{mn}$, 
%
where each tuple $(\ell,z,(z^1,\ldots,z^m))\in\Xi$ represents
respectively the number $\ell-1\in\mathbb N_0$ of times the tolerances
$\eps_k$ and $\nu_k$ have been discounted, the iterate
$z\equiv x^k\in\Rat{n}$ of the algorithm, and the points
$z^1\equiv x^{k1},\ldots,z^m\equiv x^{km}$ sampled from
$D_{\eps_k}^m(x^k)\subset\Rat{mn}$. Since $\ell=1$ at the beginning of
the algorithm and $\ell$ does not decrease as long as the algorithm is
running, we reserve $\ell=0$ to render
${0}\times\Rat{n}\times\Rat{mn}\subset\Xi$ an absorbing set. The
algorithm only reaches this set in the event of probability zero where
it samples points outside $\mathcal D$.

To determine the Markov kernel generating the probability space of the algorithm, we first consider the measurable transition map $T:\Xi\times\Rat{mn}\to\Xi$ given for any $\xi=(\ell,z,(z^1,\ldots,z^m))\in\Xi$ and $u\in\Rat{mn}$ by
\begin{align*}
T(\xi,u)\equiv(T_1(\xi),T_2(\xi),T_3(\ell,z,u)).
\end{align*} 
Here $T_1(\xi)$ and $T_2(\xi)$ are the next tolerance discount exponent $\ell'$ and the next iterate $z'$ of the algorithm, respectively, based on the current discount exponent $\ell$, the current iterate $z\equiv x^k$, and the points $z^1\equiv x^{k1},\ldots,z^m\equiv x^{km}$ sampled around it. The last component is given by 
\begin{align*}
T_3(\xi,u):=(z'+\eps(\ell)u^1,\ldots,z'+\eps(\ell)u^m),\quad z'=T_2(\xi)
\end{align*}         
where $u\in\Rat{mn}$ represents a point that is uniformly sampled from $D_1^m(0)$. 

Based on this, we define the probability transition kernel
$P:\Xi\times\B(\Xi)\to [0,1]$, where $\B(\Xi)$ is the 
trace Borel $\sigma$ 
algebra on
$\Xi$, 
by setting
$P(\xi,A):=T_{\xi\#}\mu(A)$,
%
where $T_{\xi\#}\mu$ is the pushforward of the uniform probability
measure on $D_1^m(0)\subset\Rat{mn}$ through the map
\begin{align*}
T_\xi:\Rat{mn}\to\Xi,\quad u\mapsto T_\xi(u):=T(\xi,u). 
\end{align*}  
Clearly, $P(\xi,\cdot)$ is a probability measure on $(\Xi,\B(\Xi))$
for each $\xi\in\Xi$. In addition, for any $A\in\B(\Xi)$, the map
\begin{align*}
\xi\mapsto P(\xi,A)=T_{\xi\#}\mu(A)=\int_{\Rat{mn}}\bone_A(T_\xi(u))\mu(du)
=\int_{\Rat{mn}}\bone_A(T(\xi,u))\mu(du)
\end{align*} 
is measurable by Fubini's theorem and we conclude that $P$ satisfies
both requirements of a transition kernel.

\begin{rem}
\longthmtitle{Measurability of $T$}
{\rm Measurability of the map $T$ is ensured by measurability of the
  elementary operations carried along the steps of the algorithm. To
  this end, we only need to assume that the oracle which returns an
  approximate maximizer $\theta_\star$ for each $x\in\Rat{n}$ and
  accuracy $\delta>0$ is a measurable selection map with respect to
  $(x,\delta)\in \Rat{n}\times\RgO$. We also assume that the Lipschitz
  bounds $L_{\nabla_xF}^\theta(x)$, which are use to define the
  accuracy level of this selection in \eqref{Gk:dfn:second:part} are
  measurable with respect to $x$.}  \oprocend
\end{rem}

Next, we consider the event
\begin{align*}
  A:=\{\ell\}\times B(x_{\ell,\iota},2\widehat\tau(\ell,\iota))\times(\Rat{mn}\setminus U(\ell,\iota))
\end{align*} 
on $\B(\Xi)$, which includes all the possible states
$\xi^{k_2+1},\xi^{k_2+2},\ldots$ of the algorithm for the events in
$\mathcal E_{\ell,k_1,\iota,k_2}^a$. Namely, $x_{\ell,\iota}$,
$\widehat\tau(\ell,\iota)$, $U(\ell,\iota)$, and $k_2$ are selected
according to Case~(a) in Step~2 of the proof of
Theorem~\ref{thm:GS:convergence}. Then, denoting $\ell'=T_1(\xi)$ and
$z'=T_2(\xi)$ for each $\xi=(\ell,z,(z^1,\ldots,z^m))\in\Xi$, we have
\begin{align*}
  P(\xi,A) & =\int_{\Rat{mn}}\bone_A(T(\xi,u))\mu(du) \\
           & =\int_{\Rat{mn}}\bone_{\{\ell\}}(\ell')\bone_{B(x_{\ell,\iota},2\widehat\tau(\ell,\iota))}(z')\bone_{\Rat{mn}\setminus U(\ell,\iota)}(z'+\eps(\ell)u^1,\ldots,z'+\eps(\ell)u^m)\mu(du) \\
           & \le \int_{\Rat{mn}}\bone_{\Rat{mn}\setminus U(\ell,\iota)}(z'+\eps(\ell)u^1,\ldots,z'+\eps(\ell)u^m)\mu(du) \\
           & =\frac{{\rm Vol}_{mn}(D_{\eps(\ell)}^m(z'))-{\rm Vol}_{mn}(U(\ell,\iota))}{{\rm Vol}_{mn}(D_{\eps(\ell)}^m(z'))}\equiv\rho(\ell,\iota)<1,
\end{align*}   
which also yields
\begin{align} \label{kernekl:bound}
\int_AP(\zeta,d\xi)\le \rho(\ell,\iota)\;\textup{for all}\;\zeta\in A.
\end{align} 

Now let $Q$ denote the law of $\xi^{k_2}$ corresponding to iterate
$x^{k_2}$ of the algorithm. From \cite[Theorem 3.41, Page
60]{SM-RLT:09} the probability that $\xi^{k_2+j}\in A$ for all
$j\in[\nu]$, where $\nu\in\mathbb N$, is given by
\begin{align*}
\int_{\xi^{k_2}\in\Xi}\int_{\xi^{k_2+1}\in A}\cdots\int_{\xi^{k_2+\nu-1}\in A}
Q(d\xi^{k_2})P(\xi^{k_2},d\xi^{k_2+1})\cdots P(\xi^{k_2+\nu-1},A).
\end{align*} 
By \eqref{kernekl:bound}, this probability is bounded by
$\rho(\ell,\iota)^\nu$ and goes to zero as $\nu$ goes to infinity. We
therefore conclude that the event $\mathcal E_{\ell,k_1,\iota,k_2}^a$
has probability zero. The corresponding conclusion for
$\mathcal E_{\ell,k_1,\iota,k_2}^b$ is established in the same exact
way.

\bibliography{alias,JC,SM,SMD-add} 

\begin{thebibliography}{10}
\providecommand{\url}[1]{#1}
\csname url@samestyle\endcsname
\providecommand{\newblock}{\relax}
\providecommand{\bibinfo}[2]{#2}
\providecommand{\BIBentrySTDinterwordspacing}{\spaceskip=0pt\relax}
\providecommand{\BIBentryALTinterwordstretchfactor}{4}
\providecommand{\BIBentryALTinterwordspacing}{\spaceskip=\fontdimen2\font plus
\BIBentryALTinterwordstretchfactor\fontdimen3\font minus
  \fontdimen4\font\relax}
\providecommand{\BIBforeignlanguage}[2]{{%
\expandafter\ifx\csname l@#1\endcsname\relax
\typeout{** WARNING: IEEEtranS.bst: No hyphenation pattern has been}%
\typeout{** loaded for the language `#1'. Using the pattern for}%
\typeout{** the default language instead.}%
\else
\language=\csname l@#1\endcsname
\fi
#2}}
\providecommand{\BIBdecl}{\relax}
\BIBdecl

\bibitem{KJA-LH-HZ:58}
K.~J. Arrow, L.~Hurwiez, and H.~Uzawa, \emph{Studies in {L}inear and
  {N}onlinear {P}rogramming}.\hskip 1em plus 0.5em minus 0.4em\relax Stanford
  {U}niversity {P}ress, 1958.

\bibitem{AA-MLO:20}
A.~Asl and M.~L. Overton, ``Analysis of the gradient method with an
  {A}rmijo--{W}olfe line search on a class of non-smooth convex functions,''
  vol.~35, no.~2, pp. 223--242, 2020.

\bibitem{AMB-RMA-NS:23}
A.~M. Bagirov, R.~M. Aliguliyev, and N.~Sultanova, ``Finding compact and
  well-separated clusters: Clustering using silhouette coefficients,''
  \emph{Pattern Recognition}, vol. 135, p. 109144, 2023.

\bibitem{AMB-BK-MS:08}
A.~M. Bagirov, B.~Karas{\"o}zen, and M.~Sezer, ``Discrete gradient method:
  derivative-free method for nonsmooth optimization,'' \emph{Journal of
  Optimization Theory \& Applications}, vol. 137, pp. 317--334, 2008.

\bibitem{AMB-NK-ST:25}
A.~M. Bagirov, N.~Karmitsa, and S.~Taheri, \emph{Partitional clustering via
  nonsmooth optimization}.\hskip 1em plus 0.5em minus 0.4em\relax Springer,
  2025.

\bibitem{DPB:82}
D.~P. Bertsekas, \emph{Constrained optimization and Lagrange multiplier
  methods}.\hskip 1em plus 0.5em minus 0.4em\relax Academic Press, 1982.

\bibitem{DB-ON-KMT:10a}
D.~Bertsimas, O.~Nohadani, and K.~M. Teo, ``Nonconvex robust optimization for
  problems with constraints,'' \emph{{INFORMS} Journal on Computing}, vol.~22,
  no.~1, pp. 44--58, 2010.

\bibitem{DB-ON-KMT:10b}
------, ``Robust optimization for unconstrained simulation-based problems,''
  \emph{Operations Research}, vol.~58, no.~1, pp. 161--178, 2010.

\bibitem{JB-KM:19}
J.~Blanchet and K.~Murthy, ``Quantifying distributional model risk via optimal
  transport,'' \emph{Mathematics of Operations Research}, vol.~44, no.~2, pp.
  565--600, 2019.

\bibitem{JFB-JCG-CL-CAS:06}
J.-F. Bonnans, J.-C. Gilbert, C.~Lemar{\'e}chal, and C.~A. Sagastiz{\'a}bal,
  \emph{Numerical optimization: theoretical and practical aspects}.\hskip 1em
  plus 0.5em minus 0.4em\relax Springer, 2006.

\bibitem{MB-EYH-AJ:23}
M.~Boroun, E.~Y. Hamedani, and A.~Jalilzadeh, ``Projection-free methods for
  solving nonconvex-concave saddle point problems,'' in \emph{Advances in
  Neural Information Processing Systems}, vol.~36, 2023, pp. 53\,844--53\,856.

\bibitem{DB-JC-SM:22-cdc}
D.~Boskos, J.~Cort\'es, and S.~Mart{\'\i}nez, ``Distributionally robust
  optimization via {H}aar wavelet ambiguity sets,'' in \emph{{IEEE} Int. Conf.
  on Decision and Control}, Cancun, Mexico, Dec. 2022, pp. 4782--4787.

\bibitem{DB-JC-SM:23-cdc}
D.~Boskos, J.~Cort{\'e}s, and S.~Mart{\'\i}nez, ``Data-driven distributionally
  robust coverage control by mobile robots,'' in \emph{{IEEE} Int. Conf. on
  Decision and Control}, Singapore, 2023, pp. 2030--2035.

\bibitem{DB-JC-SM:25-arXiv}
D.~Boskos, J.~Cort\'es, and S.~Mart{\'\i}nez, ``Gradient sampling algorithm for
  subsmooth functions,'' 2025, arXiv:2503.16638.

\bibitem{FB-JC-SM:08-sv}
F.~Bullo, J.~Cort{\'e}s, and S.~Martinez, ``Distributed algorithms for robotic
  networks,'' in \emph{Encyclopedia of Complexity and System Science}, R.~A.
  Meyers, Ed.\hskip 1em plus 0.5em minus 0.4em\relax New York: Springer, 2009,
  entry 00168.

\bibitem{FB-JC-SM:09}
F.~Bullo, J.~Cort{\'e}s, and S.~Mart{\'\i}nez, \emph{Distributed Control of
  Robotic Networks}, ser. Applied Mathematics Series.\hskip 1em plus 0.5em
  minus 0.4em\relax Princeton University Press, 2009.

\bibitem{JVB-FEC-ASL-MLO-LS:20}
J.~V. Burke, F.~E. Curtis, A.~S. Lewis, M.~L. Overton, and L.~Sim{\~o}es,
  ``Gradient sampling methods for nonsmooth optimization,'' \emph{Numerical
  nonsmooth optimization: State of the art algorithms}, pp. 201--225, 2020.

\bibitem{JVB-ASL-MLO:02}
J.~V. Burke, A.~S. Lewis, and M.~L. Overton, ``Approximating subdifferentials
  by random sampling of gradients,'' \emph{Mathematics of Operations Research},
  vol.~27, no.~3, pp. 567--584, 2002.

\bibitem{JVB-ASL-MLO:05}
------, ``A robust gradient sampling algorithm for nonsmooth, nonconvex
  optimization,'' \emph{SIAM Journal on Optimization}, vol.~15, no.~3, pp.
  751--779, 2005.

\bibitem{JVB-QL:21}
J.~V. Burke and Q.~Lin, ``Convergence of the gradient sampling algorithm on
  directionally {L}ipschitz functions,'' \emph{Set-Valued and Variational
  Analysis}, vol.~29, no.~4, pp. 949--966, 2021.

\bibitem{YC-AO-WZ:24}
Y.~Cai, A.~Oikonomou, and W.~Zheng, ``Accelerated algorithms for constrained
  nonconvex-nonconcave min-max optimization and comonotone inclusion,'' vol.
  235, 2024, pp. 5312--5347.

\bibitem{XC:12}
X.~Chen, ``Smoothing methods for nonsmooth, nonconvex minimization,''
  \emph{Mathematical Programming}, vol. 134, pp. 71--99, 2012.

\bibitem{FC:13}
F.~Clarke, \emph{Functional analysis, calculus of variations and optimal
  control}.\hskip 1em plus 0.5em minus 0.4em\relax Springer, 2013, vol. 264.

\bibitem{JC-FB:09-sirev}
J.~Cort{\'e}s and F.~Bullo, ``Nonsmooth coordination and geometric optimization
  via distributed dynamical systems,'' \emph{SIAM Review}, vol.~51, no.~1, pp.
  163--189, 2009.

\bibitem{JC-SM-TK-FB:02-tra}
J.~Cort{\'e}s, S.~Martinez, T.~Karatas, and F.~Bullo, ``Coverage control for
  mobile sensing networks,'' \emph{IEEE Transactions on Robotics and
  Automation}, vol.~20, no.~2, pp. 243--255, 2004.

\bibitem{YC-JSP:21}
Y.~Cui and J.-S. Pang, \emph{Modern nonconvex nondifferentiable
  optimization}.\hskip 1em plus 0.5em minus 0.4em\relax SIAM, 2021.

\bibitem{DD-DD:19}
D.~Davis and D.~Drusvyatskiy, ``Stochastic model-based minimization of weakly
  convex functions,'' \emph{SIAM Journal on Optimization}, vol.~29, no.~1, pp.
  207--239, 2019.

\bibitem{DD-DDYTL-SP-GY:22}
D.~Davis, D.~Drusvyatskiy, Y.~T. Lee, S.~Padmanabhan, and G.~Ye, ``A gradient
  sampling method with complexity guarantees for {L}ipschitz functions in high
  and low dimensions,'' \emph{Advances in Neural Information Processing
  Systems}, vol.~35, pp. 6692--6703, 2022.

\bibitem{SD:02}
S.~Dempe, \emph{Foundations of bilevel programming}.\hskip 1em plus 0.5em minus
  0.4em\relax Kluwer Academic Publishers, 2002.

\bibitem{QD-ME-LJ:06}
Q.~Du, M.~Emelianenko, and L.~Ju, ``Convergence of the {L}loyd algorithm for
  computing centroidal voronoi tessellations,'' \emph{SIAM Journal on Numerical
  Analysis}, vol.~44, no.~1, p. 102–119, 2006.

\bibitem{JCD-FR:18}
J.~C. Duchi and F.~Ruan, ``Stochastic methods for composite and weakly convex
  optimization problems,'' \emph{SIAM Journal on Optimization}, vol.~28, no.~4,
  pp. 3229--3259, 2018.

\bibitem{AAG:77}
A.~A. Goldstein, ``Optimization of {L}ipschitz continuous functions,''
  \emph{Mathematical Programming}, vol.~13, no.~1, pp. 14--22, 1977.

\bibitem{SG-HL:00}
S.~Graf and H.~Luschgy, \emph{Foundations of quantization for probability
  distributions}.\hskip 1em plus 0.5em minus 0.4em\relax Springer, 2000.

\bibitem{JG-ASL:18}
J.~Guo and A.~S. Lewis, ``Nonsmooth variants of {P}owell's {BFGS} convergence
  theorem,'' \emph{SIAM Journal on Optimization}, vol.~28, no.~2, pp.
  1301--1311, 2018.

\bibitem{WH-CS-MS:16}
W.~Hare, C.~Sagastiz{\'a}bal, and M.~Solodov, ``A proximal bundle method for
  nonsmooth nonconvex functions with inexact information,'' \emph{Computational
  Optimization and Applications}, vol.~63, pp. 1--28, 2016.

\bibitem{ESH-SAS-LEAS:16}
E.~S. Helou, S.~A. Santos, and L.~E.~A. Sim{\~o}es, ``On the differentiability
  check in gradient sampling methods,'' vol.~31, no.~5, pp. 983--1007, 2016.

\bibitem{RH-KOK:93}
R.~Hettich and K.~O. Kortanek, ``Semi-infinite programming: theory, methods,
  and applications,'' \emph{SIAM Review}, vol.~35, no.~3, pp. 380--429, 1993.

\bibitem{JBHU:85}
J.-B. Hiriart-Urruty, ``Generalized differentiability/duality and optimization
  for problems dealing with differences of convex functions,'' in
  \emph{Convexity and Duality in Optimization: Proceedings of the Symposium on
  Convexity and Duality in Optimization Held at the University of Groningen,
  The Netherlands June 22, 1984}.\hskip 1em plus 0.5em minus 0.4em\relax
  Springer, 1985, pp. 37--70.

\bibitem{JBHU-CL:93}
J.-B. Hiriart-Urruty and C.~Lemar{\'e}chal, \emph{Convex analysis and
  minimization algorithms I}.\hskip 1em plus 0.5em minus 0.4em\relax Springer,
  1993.

\bibitem{RH-PMP-NVT:00}
R.~Horst, P.~M. Pardalos, and N.~V. Thoai, \emph{Introduction to global
  optimization}.\hskip 1em plus 0.5em minus 0.4em\relax Springer, 2000.

\bibitem{BH-MD:13}
B.~Houska and M.~Diehl, ``Nonlinear robust optimization via sequential convex
  bilevel programming,'' \emph{Mathematical Programming}, vol. 142, no.~1, pp.
  539--577, 2013.

\bibitem{RJ-AM:25}
R.~Jiang and A.~Mokhtari, ``Generalized optimistic methods for convex-concave
  saddle point problems,'' \emph{SIAM Journal on Optimization}, vol.~35, no.~3,
  pp. 2066--2097, 2025.

\bibitem{MJ-GK-TL-OS-MZ:23}
M.~Jordan, G.~Kornowsk, T.~Lin, O.~Shamir, and M.~Zampetakis, ``Deterministic
  nonsmooth nonconvex optimization,'' in \emph{Conference on Learning Theory},
  2023, pp. 4570--4597.

\bibitem{KCK:85}
K.~C. Kiwiel, \emph{Methods of descent for nondifferentiable
  optimization}.\hskip 1em plus 0.5em minus 0.4em\relax Springer, 1985, vol.
  1133.

\bibitem{KCK:07}
------, ``Convergence of the gradient sampling algorithm for nonsmooth
  nonconvex optimization,'' \emph{SIAM Journal on Optimization}, vol.~18,
  no.~2, pp. 379--388, 2007.

\bibitem{KCK:10}
------, ``A nonderivative version of the gradient sampling algorithm for
  nonsmooth nonconvex optimization,'' \emph{SIAM Journal on Optimization},
  vol.~20, no.~4, pp. 1983--1994, 2010.

\bibitem{GK-OS:22}
G.~Kornowski and O.~Shamir, ``Oracle complexity in nonsmooth nonconvex
  optimization,'' \emph{Journal of Machine Learning Research}, vol.~23, no.~1,
  pp. 14\,161--14\,204, 2022.

\bibitem{MK-FL-MS:22}
M.~Kuchlbauer, F.~Liers, and M.~Stingl, ``Adaptive bundle methods for nonlinear
  robust optimization,'' \emph{{INFORMS} Journal on Computing}, vol.~34, no.~4,
  pp. 2106--2124, 2022.

\bibitem{JL-MM-SMW:19}
J.~Larson, M.~Menickelly, and S.~M. Wild, ``Derivative-free optimization
  methods,'' \emph{Acta Numerica}, vol.~28, pp. 287--404, 2019.

\bibitem{SL-DK:21}
S.~Lee and D.~Kim, ``Fast extra gradient methods for smooth structured
  nonconvex-nonconcave minimax problems,'' in \emph{Advances in Neural
  Information Processing Systems}, vol.~34, 2021, pp. 22\,588--22\,600.

\bibitem{ASL-MLO:13}
A.~S. Lewis and M.~L. Overton, ``Nonsmooth optimization via quasi-{N}ewton
  methods,'' \emph{Mathematical Programming}, vol. 141, pp. 135--163, 2013.

\bibitem{TL-CJ-MIJ:20}
T.~Lin, C.~Jin, and M.~I. Jordan, ``On gradient descent ascent for
  nonconvex-concave minimax problems,'' 2020, pp. 6083--6093.

\bibitem{TL-CJ-MIJ:25}
------, ``Two-timescale gradient descent ascent algorithms for nonconvex
  minimax optimization,'' \emph{Journal of Machine Learning Research}, vol.~26,
  no.~11, pp. 1--45, 2025.

\bibitem{PM-YD-HL-MM:22}
P.~Mahdavinia, Y.~Deng, H.~Li, and M.~Mahdavi, ``Tight analysis of
  extra-gradient and optimistic gradient methods for nonconvex minimax
  problems,'' in \emph{Advances in Neural Information Processing Systems},
  vol.~35, 2022, pp. 31\,213--31\,225.

\bibitem{DM:77}
D.~Maistroskii, ``Gradient methods for finding saddle points,'' \emph{Matekon},
  vol.~13, no.~3, p.~22, 1977.

\bibitem{SM-RLT:09}
S.~Meyn and R.~L. Tweedie, \emph{Markov chains and stochastic stability}.\hskip
  1em plus 0.5em minus 0.4em\relax Cambridge University Press, 2009.

\bibitem{PME-DK:17}
P.~{Mohajerin Esfahani} and D.~Kuhn, ``Data-driven distributionally robust
  optimization using the {W}asserstein metric: performance guarantees and
  tractable reformulations,'' \emph{Mathematical Programming}, vol. 171, no.
  1-2, pp. 115--166, 2018.

\bibitem{AM-AO-SP:20}
A.~Mokhtari, A.~Ozdaglar, and S.~Pattathil, ``A unified analysis of
  extra-gradient and optimistic gradient methods for saddle point problems:
  Proximal point approach,'' 2020, pp. 1497--1507.

\bibitem{AN-AO:08a}
A.~Nedić and A.~Ozdaglar, ``Subgradient methods for saddle-point problems,''
  \emph{Journal of Optimization Theory \& Applications}, vol. 142, no.~1, p.
  205–228, 2009.

\bibitem{AN:04}
A.~Nemirovski, ``Prox-method with rate of convergence o (1/t) for variational
  inequalities with {L}ipschitz continuous monotone operators and smooth
  convex-concave saddle point problems,'' \emph{SIAM Journal on Optimization},
  vol.~15, no.~1, pp. 229--251, 2004.

\bibitem{YN:18}
Y.~Nesterov, \emph{Lectures on convex optimization}.\hskip 1em plus 0.5em minus
  0.4em\relax Springer, 2018, vol. 137.

\bibitem{GP:98}
G.~Pag{\`e}s, ``A space quantization method for numerical integration,''
  \emph{Journal of computational and applied mathematics}, vol.~89, no.~1, pp.
  1--38, 1998.

\bibitem{GP:18}
------, \emph{Numerical probability}.\hskip 1em plus 0.5em minus 0.4em\relax
  Springer, 2018.

\bibitem{RTR:82}
R.~T. Rockafellar, ``Favorable classes of {L}ipschitz continuous functions in
  subgradient optimization,'' 1982.

\bibitem{RTR-RJBW:98}
R.~T. Rockafellar and R.~J.-B. Wets, \emph{Variational analysis}.\hskip 1em
  plus 0.5em minus 0.4em\relax Springer, 1998.

\bibitem{HS-JZ:92}
H.~Schramm and J.~Zowe, ``A version of the bundle idea for minimizing a
  nonsmooth function: Conceptual idea, convergence analysis, numerical
  results,'' \emph{SIAM Journal on Optimization}, vol.~2, no.~1, pp. 121--152,
  1992.

\bibitem{NZS:85}
N.~Z. Shor, \emph{Minimization methods for non-differentiable functions}, 1985.

\bibitem{OS:03}
O.~Stein, \emph{Bi-level strategies in semi-infinite programming}.\hskip 1em
  plus 0.5em minus 0.4em\relax Springer, 2003, vol.~71.

\bibitem{CV:08}
C.~Villani, \emph{Optimal {T}ransport: {O}ld and {N}ew}.\hskip 1em plus 0.5em
  minus 0.4em\relax Springer, 2008, vol. 338.

\bibitem{PW:75}
P.~Wolfe, ``A method of conjugate subgradients for minimizing nondifferentiable
  functions,'' in \emph{Nondifferentiable Optimization}, ser. Mathematical
  Programming Study.\hskip 1em plus 0.5em minus 0.4em\relax North-Holland,
  1975, vol.~3, pp. 145--173.

\bibitem{JZ-HL-SS-AJ:20}
J.~Zhang, H.~Lin, S.~Jegelka, S.~Sra, and A.~Jadbabaie, ``Complexity of finding
  stationary points of nonconvex nonsmooth functions,'' 2020, pp.
  11\,173--11\,182.

\bibitem{TZ-LZ-AMCA-JB-JL:23}
T.~Zheng, L.~Zhu, A.~M.-C.~S. Anthony, J.~Blanchet, and J.~Li, ``Universal
  gradient descent ascent method for nonconvex-nonconcave minimax
  optimization,'' \emph{Advances in Neural Information Processing Systems},
  vol.~36, pp. 54\,075--54\,110, 2023.

\end{thebibliography}
\bibliographystyle{IEEEtranS}
\end{document}